\newcommand{\inner}[1]{\left\langle #1 \right\rangle}
\newcommand{\norm}[1]{\left\Vert #1\right\Vert}
\newcommand{\bb}[1]{\mathbb{#1}}
\newcommand{\ca}[1]{\mathcal{#1}}
\newcommand{\tr}[0]{\mathrm{tr}}
\newcommand{\tX}[0]{\tilde{X}}
\newcommand{\Diag}[0]{\mathrm{Diag}}
\newcommand{\Xo}[0]{{X_{ \mathrm{orth} }} }
\newcommand{\ff}{_{\mathrm{F}}}
\newcommand{\fs}{^2_{\mathrm{F}}}
\newcommand{\tp}{^\top}
\newcommand{\A}{\ca{A}}
\newcommand{\prox}{{\mathrm{prox}}}
\newcommand{\Apen}[1]{\left( \frac{3}{2} I_p - \frac{1}{2} {#1}\tp {#1} \right)}
\newcommand{\SLPGs}{{SLPG }}
\newcommand{\Xk}{{X_{k} }}
\newcommand{\Yk}{{Y_{k} }}
\newcommand{\Xkp}{{X_{k+1} }}
\newcommand{\Dk}{{D_{k} }}
\newcommand{\etak}{{\eta_{k} }}
\newcommand{\Ykp}{{Y_{k+1} }}
\newcommand{\grad}{{\mathit{grad}\,}}
\newcommand{\ProjS}{{\ca{P}_{\ca{S}_{n,p}}}}
\newcommand{\partialRie}{ {\partial}_{\ca{R}} }
\newcommand{\partialProj}{{\partial}_{\ca{P}}}
\newcommand{\JA}{\ca{J}_{\mathcal{A}}}
\newcommand{\D}{\ca{D}}
\newcommand{\hsharp}{{h^{\sharp}}}
\newtheorem{theo}{Theorem}[section]
\newtheorem{lem}[theo]{Lemma}
\newtheorem{prop}[theo]{Proposition}
\newtheorem{cond}[theo]{Condition}
\newtheorem{coro}[theo]{Corollary}
\newtheorem{defin}[theo]{Definition}
\newtheorem{rmk}[theo]{Remark}
\newtheorem{assumpt}[theo]{Assumption}
\numberwithin{equation}{section}
\title{A Constraint Dissolving Approach for Nonsmooth Optimization over the Stiefel Manifold}
\author{ Xiaoyin Hu
	\thanks{School of Computer and Computing Science, Hangzhou City University, Hangzhou, 310015, China (huxy@zucc.edu.cn). This research was supported by Zhejiang Provincial Natural Science Foundation of China under Grant (No. LQ23A010002) and Scientific research project of Zhejiang Provincial Education Department(Y202248716).},~
	Nachuan Xiao
	\thanks{The Institute of Operations Research and Analytics, National University of Singapore, Singapore (xnc@lsec.cc.ac.cn). The research of this author is supported by the Ministry of Education, Singapore, under its Academic Research Fund Tier 3 grant call (MOE-2019-T3-1-010).},~
	Xin Liu
	\thanks{State Key Laboratory of Scientific and Engineering Computing, Academy of Mathematics and Systems Science, Chinese Academy of Sciences, and University of Chinese Academy of Sciences, China (liuxin@lsec.cc.ac.cn). Research is supported in part by the National Natural Science Foundation of China (No. 12125108, 11971466, 12288201, 12021001 and 11991021), Key Research Program of Frontier Sciences, Chinese Academy of Sciences (No. ZDBS-LY-7022).},~
	and Kim-Chuan Toh
	\thanks{Department of Mathematics, and Institute of Operations Research and Analytics, National University of Singapore, Singapore 119076 (mattohkc@nus.edu.sg). The research of this author is supported by the Ministry of Education, Singapore, under its Academic Research Fund Tier 3 grant call (MOE-2019-T3-1-010).}
}
\begin{document}
	\maketitle
	
	\begin{abstract}
		This paper focus on the minimization of a possibly nonsmooth objective function over the Stiefel manifold. The existing  approaches either lack efficiency or can only tackle prox-friendly objective functions.  We propose a constraint dissolving function named \ref{NEPen} and show that it has the same first-order stationary points and local minimizers as the original problem in a neighborhood of the Stiefel manifold. Furthermore, we show that the Clarke subdifferential of \ref{NEPen} is easy to achieve from the Clarke subdifferential of the objective function. Therefore, various existing approaches for unconstrained nonsmooth optimization can be directly applied to nonsmooth optimization problems over the Stiefel manifold. We propose a framework for developing subgradient-based methods and establish their convergence properties based on prior works. Furthermore, based on our proposed framework, we can develop efficient approaches for optimization over the Stiefel manifold. Preliminary numerical experiments further highlight that the proposed constraint dissolving approach yields efficient and direct implementations of various unconstrained approaches to nonsmooth optimization problems over the Stiefel manifold. 
	\end{abstract}

	\section{Introduction}
	In this paper, we consider the following nonsmooth optimization problem
	\begin{equation}
		\label{Prob_Ori}
		\tag{OCP}
		\begin{aligned}
			\min_{X \in \bb{R}^{n\times p}} \quad &f(X)\\
			\text{s. t.} \quad & X\tp X = I_p,
		\end{aligned}
	\end{equation}
	where the objective function $f: \bb{R}^{n\times p} \mapsto \bb{R}$ is locally Lipschitz continuous and possibly nonsmooth, while  $I_p$ denotes the $p\times p$ identity matrix. 
	We denote the feasible region of \ref{Prob_Ori} as $ \ca{S}_{n,p} := \{X \in \bb{R}^{n\times p}:  X\tp  X = I_p \}$, which is also referred as the Stiefel manifold throughout this paper. 
	
	Our interest in \ref{Prob_Ori} with nonsmooth objective functions comes from its various applications in different engineering fields, including sparse principal component analysis \cite{chen2018proximal,xiao2020l21}, robust subspace recovery \cite{lerman2018overview,wang2019globally},  packing problems \cite{absil2017collection}, and  training orthogonally constrained  neural networks  \cite{arjovsky2016unitary,bansal2018can}, where the objective function is only locally Lipschitz continuous and usually not Clarke regular (the definition for Clarke regular can be found at Definition \ref{Defin_regular}). As an illustration, let us present some motivating applications arisen from training deep neural networks.
	
	Training deep neural networks often become challenging both theoretically and practically \cite{glorot2010understanding}
	when the phenomenon of gradients vanishing or exploding happens.
	To address this issue, several recent works focus on imposing orthogonality constraints to the weights of the layers in these deep neural networks \cite{arjovsky2016unitary,bansal2018can,lezcano2019trivializations}. As orthogonality implies energy preservation properties  \cite{zhou2006special}, these existing works demonstrate that the orthogonal constraints can stabilize the distribution of activations over layers within convolutional neural networks and make their optimization more efficient. Moreover, some existing works \cite{lezcano2019cheap,lezcano2019trivializations,wang2020orthogonal} observe encouraging improvements in the accuracy and robustness of the networks with orthogonal constraints.  
	However, when these neural networks are built from nonsmooth activation functions, their loss functions are usually not Clarke regular.   For example, the loss function for fully connected two-layer neural network with mean squared error (MSE) can be expressed as 
	\begin{equation*}
		f_{\text{mse}}(W_1, W_2, b_1, b_2)\,=\,\frac{1}{N}\sum_{i = 1}^{N} ~ \norm{ \tilde{\sigma}_2(W_2\tilde{\sigma}_1(W_1x_i + b_1) + b_2) - y_i }_2^2,
	\end{equation*}
	where $\{W_1, W_2\}$ denote the weight matrices, $\{b_1, b_2\}$ denote the bias vectors, $\{\tilde{\sigma}_1, \tilde{\sigma}_2\}$ refer to the activation functions, and $\{(x_i, y_i)\mid i = 1,...,N\}$ is the training data set.  
	When $\tilde{\sigma}_i$ ($i=1,2$) are chosen as  nonsmooth activation functions, such as  rectified linear unit (ReLU) and leaky ReLU \cite{maas2013rectifier}, the loss function is clearly locally Lipschitz continuous but not Clarke regular
	%weakly convex 
	with respect to $(W_1, W_2, b_1, b_2)$. 
	Therefore, we only assume that the objective function $f$ in \ref{Prob_Ori} to be locally Lipschitz continuous throughout this paper.

	\subsection{Existing works}
	Minimizing smooth objective functions over the Stiefel manifold has been extensively studied in the past several years. 
	Interested readers can refer to the books \cite{Absil2009optimization,boumal2020introduction}, the recent survey paper \cite{hu2020brief} and the references therein for more details. 
	Compared to smooth optimization over the Stiefel manifold, research on nonsmooth cases are limited \cite{li2021weakly}. 
	In the following, we briefly mention some existing state-of-the-art approaches for nonsmooth optimization over the Stiefel manifold.

	\paragraph{Riemannian subgradient methods}
	
	Some existing Riemannian subgradient methods are specifically designed for minimizing geodsically convex objectives over a Riemannian manifold \cite{ferreira1998subgradient,ferreira2002proximal,zhang2016first}. However, the geodesic convexity over a compact Riemannian manifold, such as the Stiefel manifold discussed in this paper, only holds for constant functions. Therefore, these methods are invalid for any nontrivial case of \ref{Prob_Ori}.
	Very recently, \cite{li2021weakly} studies a class of Riemannian subgradient methods to minimize weakly convex functions over the Stiefel manifold. By extending the subgradient inequality \cite{davis2019stochastic} from $\bb{R}^{n\times p}$ to the tangent spaces of the Stiefel manifold, the theoretical analysis of these Riemannian subgradient methods can be implemented by the same methodologies as existing works for unconstrained optimization \cite{davis2018subgradient,davis2019stochastic,li2019incremental}. However, the analysis in \cite{li2021weakly} relies on the weak convexity of the objective function, which we do not assume for \ref{Prob_Ori}.  Therefore, when we apply subgradient methods to solve \ref{Prob_Ori}, the proposed frameworks in \cite{li2021weakly} are not capable of analyzing the related theoretical properties.

	\paragraph{Riemannian gradient sampling methods}
	
	Gradient sampling methods \cite{burke2005robust,burke2020gradient}
	are originally proposed for unconstrained nonsmooth nonconvex optimization. For nonsmooth optimization problems over a Riemannian manifold, several recent works \cite{hosseini2018line,hosseini2017riemannian} proposed Riemannian gradient sampling methods based on their unconstrained origins. When applied to the minimization over the Stiefel manifold, those Riemannian gradient sampling methods first take a few sampling points $\{ X_{k,j}\mid j = 1,...,N_J\} \subset \ca{S}_{n,p}$ in a neighborhood of the current iterate $X_k$. Then they aim to find a descent direction in the convex hull of the Riemannian gradient at  the sample points
	$\{ X_{k,j} \mid j = 1,...,N_J\}$. However, as mentioned in \cite{hosseini2017riemannian,hosseini2018line}, the number of sample points $N_J$ should be much larger than the dimension of the manifold. Therefore, for optimization over the Stiefel manifold, the  number of sampling points should be much larger than $np-p(p+1)/2$ as described in \cite{li2021weakly,burke2020gradient}. As emphasized in \cite{li2021weakly}, it is usually expensive to generate a descent direction in existing Riemannian gradient sampling methods, especially in high dimensional cases.

	\paragraph{Proximal gradient methods}	
	
	For problems with prox-friendly objective functions, for instance, the summation of a
	smooth function and a nonsmooth regularizer whose proximal mapping is easy-to compute,
	the Riemannian proximal gradient methods are developed by extending the proximal gradient methods from Euclidean space to Riemannian manifolds. Several existing works \cite{ferreira2002proximal,de2016new} develop the Riemannian proximal gradient methods by computing the proximal mappings over the Riemannian manifold. Although their global convergence properties could be established by following the same techniques as their unconstrained counterparts, computing the proximal mappings in these approaches is expensive, except for some special cases. Hence these approaches are generally inefficient in practice as discussed in \cite{li2021weakly}. Recently, several Riemannian proximal gradient methods \cite{chen2018proximal,huang2019riemannian,zhou2021semi} are proposed by computing the proximal mapping in the tangent space rather than over the manifold. Therefore, computing the proximal mappings in each iteration is equivalent to solving a linearly constrained strongly convex optimization problem, which usually does not have closed-form solution. Although various existing iterative methods such as semi-smooth Newton methods \cite{qi1993nonsmooth} and Arrow-Hurwicz methods \cite{chambolle2011a} can be applied for the proximal subproblem, solving the proximal subproblem is usually a bottleneck in these Riemannian proximal gradient methods.

	Recently, a novel class of efficient approaches for Stiefel manifold optimization are developed based on  exact penalty models. Inspired by the exact penalty model (PenC) for smooth optimization over the Stiefel manifold \cite{xiao2020class,hu2020anefficiency}, \cite{xiao2020l21} extends PenC to $\ell_{2,1}$-norm regularized cases and proposes a proximal gradient method called PenCPG.
	In PenCPG, the proximal subproblem has a closed-form solution, which leads 
	to its numerical superiority over existing Riemannian proximal gradient approaches. 
	
	Furthermore, for generalized composite objective functions, \cite{xiao2021penalty} 
	proposes a penalty-free infeasible approach named sequential linearized proximal gradient method (SLPG). 
	In each iteration, SLPG alternatively takes the tangential and the normal steps, both of which do not involve any
	penalty parameter or orthonormalization procedure. Consequently, \SLPGs enjoys high scalability and avoids the numerical inefficiency from inappropriately selected penalty parameters. 
	
	However, the efficiencies of all the aforementioned proximal gradient methods heavily rely on having
	prox-friendly objective functions. 
	For general nonsmooth objective functions, computing its corresponding proximal mappings can be extremely expensive \cite{li2021weakly}.  As a result, we need new approaches for general nonsmooth optimization over the Stiefel manifold.

	\subsection{Motivation}

	Our motivation comes from the penalty function approaches for Riemannian optimization. For smooth optimization problems over the Stiefel manifold, \cite{xiao2021solving} proposes an exact penalty  function (ExPen) that takes the form as follows:
	\begin{equation}
		\tag{ExPen}
		\label{ExPen}
		f\left( X\Apen{X}\right) + \frac{\beta}{4} \norm{X\tp X - I_p}\fs.
	\end{equation}
	They show that \ref{ExPen} is an exact penalty function for smooth optimization over the Stiefel manifold. Different from the existing Fletcher's penalty function \cite{fletcher1970class}, the objective function in \ref{ExPen} does not involve $\nabla f$. Therefore, \ref{ExPen} has easy-to-compute derivatives and enables direct implementation of various existing unconstrained approaches for solving smooth optimization problems over the Stiefel manifold. 
	
	Very recently,  \cite{xiao2022constraint} proposes constraint dissolving approaches for minimizing smooth functions over a closed Riemannian manifold. In their proposed approaches, solving a Riemannian optimization problem is transferred into  the unconstrained minimization of a corresponding constraint dissolving function. According to \cite{xiao2022constraint}, the  constraint dissolving function for \ref{Prob_Ori} can be expressed as 
	\begin{equation}
		\tag{CDF}
		\label{CDF}
		f\left( \A(X) \right) + \frac{\beta}{4} \norm{X\tp X - I_p}\fs,
	\end{equation}
	where the constraint dissolving mapping $\A: \bb{R}^{n\times p} \to \bb{R}^{n\times p}$ is locally Lipschitz smooth and  satisfies the following conditions.
	
	\begin{cond}\label{cond:1}
		\begin{enumerate}
			\item $\A(X) = X$ holds for any $X \in \ca{S}_{n,p}$.
			\item The Jacobian of $\A(X)\tp \A(X) - I_p$ equals to $0$ for any $X \in \ca{S}_{n,p}$. 
		\end{enumerate}
	\end{cond}
	
	%The formulation of \ref{CDF} 
	Condition \ref{cond:1}
	grants great flexibility in designing the constraint dissolving functions for \ref{Prob_Ori}. 
	However, for nonsmooth optimization problems over the Stiefel manifold, the chain rule usually fails when computing $\partial (f\circ \A)$. Therefore, the Clarke subdifferential of \ref{CDF} is usually not easy to obtain  from $\partial f$ and the Jacobian of $\A$. %unless
	In particular, the mapping $X \mapsto X(\frac{3}{2} I_p - \frac{1}{2}X\tp X)$, adopted in ExPen, 
	is not a homeomorphism over $\bb{R}^{n\times p}$. Therefore, computing the Clarke subdifferential for \ref{ExPen} is usually not easy when $f$ is not Clarke regular. That is the reason why we can not directly extend \ref{ExPen} to solve \ref{Prob_Ori}.

	Nevertheless, we notice that the chain rule can be guaranteed 
	if $\A$ is a {\it homeomorphism} (i.e. $\A^{-1}$ is well-defined and continuous over $\bb{R}^{n\times p}$) \cite[Theorem 2.3.10]{clarke1990optimization},
	even in the case that $f$ is not Clarke regular. The remaining question now is  how to construct a homeomorphic $\A$.

	\subsection{Contribution}
	Taking both Condition \ref{cond:1} and the desirable homeomorphism property of $\A$ into account, 
	we construct the following nonsmooth constraint dissolving function named \ref{NEPen}
	for nonsmooth optimization over the Stiefel manifold, 
	\begin{equation}
		\label{NEPen}
		\tag{NCDF}
		h(X):= f\left( \A(X) \right) + \frac{\beta}{4} \norm{X\tp X - I_p}\fs,
	\end{equation}
	where $\A: \bb{R}^{n\times p} \to \bb{R}^{n\times p}$ is defined as
	\begin{equation}
		\A(X) := \frac{1}{8}X \left( {15} I_p - {10}X\tp X + 3 (X\tp X)^2  \right).
		\label{A-map}
	\end{equation} 
	We prove that $\A$ is a homeomorphism and $\partial (f\circ \A)$ follows the chain rule \cite[Theorem 2.3.10]{clarke1990optimization},  and  $\partial h(X)$ has an explicit expression 
	which is easy to achieve from the subdifferential of $f$. Based on the explicit expression of $\partial h(X)$, we prove that  \ref{Prob_Ori} and \ref{NEPen} share the same local minimizers and stationary points under mild assumptions for any $\beta > 0$. These properties characterize the equivalence between \ref{Prob_Ori} and \ref{NEPen} and further illustrate that various unconstrained approaches can be directly implemented to solve \ref{Prob_Ori} through \ref{NEPen}.

	Moreover, we present several examples to show that \ref{Prob_Ori} can be solved by directly applying subgradient methods to minimize \ref{NEPen}. In these examples, we propose a framework for developing subgradient methods for \ref{Prob_Ori}, and show that their convergence properties can be established directly from existing rich theoretical results in unconstrained optimization. Furthermore, we propose a novel proximal subgradient method for \ref{Prob_Ori}, and prove its convergence properties  based on \ref{NEPen} and our proposed framework. 
	Preliminary numerical experiments on training neural networks with orthogonally constrained weights further illustrate the superior performance of our proposed algorithms, especially in the aspect of computational efficiency.   These illustrative examples highlight the promising potential of \ref{NEPen}.

	\subsection{Notations}
	Throughout this paper, the Euclidean inner product of two matrices $X, Y\in \bb{R}^{n\times p}$ is defined as $ \inner{X, Y}=\tr(X\tp Y)$,
	where $\tr(A)$ is the trace of a matrix $A\in \bb{R}^{n\times p}$.
	We use $\norm{\cdot}_2$ and $\norm{\cdot}\ff$ to represent the $2$-norm and the Frobenius norm, respectively. 
	The notations $\mathrm{diag}(A)$ and $\Diag(x)$
	stand for the vector formed by the diagonal entries of the matrix $A$,
	and the diagonal matrix with the entries of $x\in\bb{R}^n$ as its diagonal, respectively. 
	We denote the smallest eigenvalue of a square matrix $A$ by $\lambda_{\mathrm{min}}(A)$, and set $\Phi(M):= \frac{1}{2}(M+M\tp)$. 
	
	Moreover,  we set the metric on the Stiefel manifold as the metric inherited from
	the inner product in $\bb{R}^{n\times p}$, and denote 
		$\ca{T}_X := \{D \in \bb{R}^{n\times p}: \Phi(D\tp X) = 0 \}$ and 
		$\ca{N}_X := \{D \in \bb{R}^{n\times p}: D = X\Lambda, \Lambda = \Lambda\tp \}$.
		Namely, 
	$\ca{T}_X$ and $\ca{N}_X$ represents the tangent space and the normal space of the 
	Stiefel manifold at $X$, respectively.

	Additionally, $\ProjS(X) = UV\tp$ denotes the orthogonal projection to Stiefel manifold, where $X = U\Sigma V\tp$ is the economical SVD of $X$
	with $U\in \ca{S}_{n,p}$, $V\in \ca{S}_{p,p}$ and $\Sigma$
	is a $p\times p$ diagonal matrix with the singular values of
	$X$ on its diagonal.  Moreover, the ball centered at $X$ with radius $\delta$ is defined as $\ca{B}(X, \delta):=  \{ Y \in \bb{R}^{n\times p}: \norm{Y - X}\ff \leq \delta  \}$. 
	Furthermore, $\JA(X)[D]$ refers to the linear transform of $D$ by the linear mapping $\JA(X)$, i.e., $\JA(X)[D] = \lim_{t \to 0} \frac{1}{t} (\A(X+tD) - \A(X))$. Additionally, for any subset $\ca{F} \subseteq \bb{R}^{n\times p}$, we denote $\JA(X)[\ca{F}] := \{ \JA(X)[D]: D \in \ca{F} \}$. 
	Finally, we denote $g(X) := f(\A(X))$, and $\Omega_r = \Big\{ X \in \bb{R}^{n\times p} \,:\, \norm{X^\top X - I_p}\ff \leq r  
	\Big\}$. 
	
	\subsection{Organization}
	The rest of this paper is organized as follows. Section 2 presents several preliminary notations and definitions.  We analyze the relationships between \ref{Prob_Ori} and \ref{NEPen} in Section 3. We show how to implement subgradient methods for \ref{Prob_Ori} by \ref{NEPen} and prove its theoretical convergence directly from existing works in Section 4. In Section 5, we present preliminary numerical experiments to illustrate that \ref{NEPen} enables efficient and direct implementation of various existing unconstrained solvers to solve \ref{Prob_Ori}. We draw a brief conclusion in the last section.
	
	\section{Preliminaries}
	\subsection{Definitions}

	\begin{defin}
		
		Given $X \in \bb{R}^{n\times p}$, the generalized directional derivative of $f$ at $X$  in the direction $D \in \bb{R}^{n\times p}$, denoted by $f^\circ(X, D)$, is defined as 
		\begin{equation*}
			f^\circ(X, D) = \mathop{\lim\sup}_{Y\to X, t \downarrow  0}~ \frac{f(Y +tD) - f(Y)}{t}. 
		\end{equation*}
		Then the generalized gradient or the Clarke subdifferential of $f$ at $X \in \bb{R}^{n\times p}$, denoted by $\partial f(X)$, is defined as 
		\begin{equation*}
			\partial f(X) := \{ W \in \bb{R}^{n\times p} : \inner{W, D} \leq f^\circ (X, D), \text{ for all } D \in \bb{R}^{n\times p} \}.
		\end{equation*}
	\end{defin}

	\begin{defin}
		\label{Defin_regular}
		We say that $f$ is (Clarke) regular at $X \in \bb{R}^{n\times p}$ if for every direction $D$, the one-sided directional derivative 
		\begin{equation*}
			f^\star(X,D) = \lim_{t \downarrow 0} ~\frac{f(X + tD) - f(X)}{t} 
		\end{equation*}
		exists and $f^\star(X, D) = f^\circ(X, D)$.
	\end{defin}

	Next we follow the definition in \cite{yang2014optimality,hosseini2018line} to present the definition for Riemannian subdifferential on the Stiefel manifold. 
	\begin{defin}
		Given $X \in \ca{S}_{n,p}$, the generalized Riemannian directional derivative of $f$ at $X$ in the direction $D \in \ca{T}_X$, denoted by $f^\diamond(X, D)$, is given by 
		\begin{equation*}
			f^\diamond(X, D) = \mathop{\lim\sup}_{ \ca{S}_{n,p} \ni Y\to X, t \downarrow  0}~ \frac{f\left( \ProjS(Y + tD) \right) - f(Y)}{t}. 
		\end{equation*}
		Then the Riemannian subdifferential, denoted as $\partialRie f(X)$, is defined as 
		\begin{equation*}
			\partialRie f(X) := \{ W \in \ca{T}_X : \inner{W, D} \leq f^\diamond (X, D), ~\text{for all } D \in  \ca{T}_X \}.
		\end{equation*}
	\end{defin}

	\begin{defin}
		\label{Defin_Projected_subdifferential}
		Given any $X\in \ca{S}_{n,p}$, the projected subdifferential of $f$ at $X \in \ca{S}_{n,p}$ on the Stiefel manifold is defined as $\partialProj f(X) = \{ W - X\Phi(X\tp W): W \in \partial f(X) \}$. 
	\end{defin}

	Moreover, we define the first-order stationary points of problem \ref{Prob_Ori} as follows.
	\begin{defin}[{\cite[Theorem 6.11]{clarke1990optimization}}]
		\label{Defin_FOSP}
		Given $X \in \ca{S}_{n,p}$, we say $X$ is a first-order stationary point of \ref{Prob_Ori} if and only if $0 \in \partialProj f(X)$. 
	\end{defin}

	\begin{prop}[{\cite[Theorem 5.1]{yang2014optimality}}]
		\label{Prop_relationship_partialR_partialP}
		For any given $X \in \ca{S}_{n,p}$, it holds that  $\partialRie f(X) \subseteq \partialProj f(X)$. 
		Furthermore,  the equality holds when $f$ is Clarke regular. 
	\end{prop}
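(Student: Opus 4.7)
The plan is to prove both directions by exploiting the key algebraic identity
$\inner{W-X\Phi(X^\top W),\,D}=\inner{W,D}$
valid for every $D\in\ca{T}_X$, together with a second-order estimate comparing the projected perturbation $\ProjS(Y+tD)$ with the straight perturbation $Y+tD$ when $D$ is tangent. The identity holds because $\Phi(X^\top W)$ is symmetric while $X^\top D$ is antisymmetric (by definition of $\ca{T}_X$), so the trace pairing vanishes. This reduces support-function comparisons on the tangent space to comparisons between $\partial f(X)$ and $\partialRie f(X)$ tested against tangent directions.

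First I would establish the analytic estimate: for $X\in\stiefel$ and $D\in\ca{T}_X$ there is $\delta>0$ and $C>0$ such that $\norm{\ProjS(Y+tD)-(Y+tD)}\ff\leq Ct^2$ for all $Y\in\stiefel\cap\ca{B}(X,\delta)$ and $t\in(0,\delta)$. This is a standard fact because the Stiefel manifold is a smooth embedded submanifold and $D$ lies in its tangent space at $X$, so the normal part of $Y+tD$ is $O(t^2)$ uniformly in $Y$ near $X$. Combined with local Lipschitz continuity of $f$ with constant $L$, this yields
\begin{equation*}
\frac{f(\ProjS(Y+tD))-f(Y)}{t}=\frac{f(Y+tD)-f(Y)}{t}+O(t),
\end{equation*}
uniformly, from which $f^\diamond(X,D)\leq f^\circ(X,D)$ for every $D\in\ca{T}_X$, and moreover $f^\star(X,D)\leq f^\diamond(X,D)$ by taking $Y=X$ and $t\downarrow 0$.

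For the inclusion $\partialRie f(X)\subseteq\partialProj f(X)$, I would argue by contradiction. The set $\partialProj f(X)$ is the image of the closed convex set $\partial f(X)$ under the continuous linear map $W\mapsto W-X\Phi(X^\top W)$, hence is a closed convex subset of $\ca{T}_X$. If some $V\in\partialRie f(X)$ did not lie in $\partialProj f(X)$, the Hahn-Banach separation theorem (within $\ca{T}_X$) produces $D\in\ca{T}_X$ with $\inner{V,D}>\sup_{W\in\partial f(X)}\inner{W-X\Phi(X^\top W),D}$. By the key identity this supremum equals $\sup_{W\in\partial f(X)}\inner{W,D}=f^\circ(X,D)$. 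On the other hand $V\in\partialRie f(X)$ gives $\inner{V,D}\leq f^\diamond(X,D)\leq f^\circ(X,D)$, a contradiction.

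For the equality under Clarke regularity, I would take any $W\in\partial f(X)$ and verify that $V:=W-X\Phi(X^\top W)$ belongs to $\partialRie f(X)$. Given $D\in\ca{T}_X$, the identity gives $\inner{V,D}=\inner{W,D}\leq f^\circ(X,D)=f^\star(X,D)$ by the definition of $\partial f$ and regularity. Combining with $f^\star(X,D)\leq f^\diamond(X,D)$ from the analytic estimate yields $\inner{V,D}\leq f^\diamond(X,D)$, so $V\in\partialRie f(X)$. The main obstacle is controlling the uniform $O(t^2)$ normal deviation in the definition of $f^\diamond$; once the explicit form of $\ProjS$ via economical SVD near $X$ is exploited, the estimate is routine, and the rest of the argument is algebraic and follows cleanly from the symmetric/antisymmetric trace identity.
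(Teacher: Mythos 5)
The paper does not actually prove this proposition --- it is quoted verbatim from \cite[Theorem 5.1]{yang2014optimality} --- so there is no internal proof to compare against. Your argument is the standard one and its overall structure is sound: the trace identity $\inner{W-X\Phi(X\tp W),D}=\inner{W,D}$ for $D\in\ca{T}_X$ (symmetric against antisymmetric), the support-function characterization $\sup_{W\in\partial f(X)}\inner{W,D}=f^\circ(X,D)$, separation of the compact convex set $\partialProj f(X)\subseteq\ca{T}_X$ from a putative outlier $V$, and the chain $\inner{W,D}\le f^\circ(X,D)=f^\star(X,D)\le f^\diamond(X,D)$ under regularity all check out.

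One quantitative claim needs repair, though it does not sink the proof. The estimate $\norm{\ProjS(Y+tD)-(Y+tD)}\ff\le Ct^2$ \emph{uniformly} for $Y\in\stiefel\cap\ca{B}(X,\delta)$ is false: since $D\in\ca{T}_X$ but not in $\ca{T}_Y$, one has $(Y+tD)\tp(Y+tD)-I_p=2t\,\Phi(D\tp Y)+t^2D\tp D$ with $\Phi(D\tp Y)=\Phi(D\tp(Y-X))$ of size $O(\norm{Y-X}\ff)$, so the distance to $\stiefel$ is of order $t\norm{Y-X}\ff+t^2$, which dominates $t^2$ when $t\ll\norm{Y-X}\ff$. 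The correct statement is that the error in your displayed comparison is $O(t+\norm{Y-X}\ff)$ rather than $O(t)$; this still vanishes in the joint limit $Y\to X$, $t\downarrow 0$ appearing in the definition of $f^\diamond$, so the inequality $f^\diamond(X,D)\le f^\circ(X,D)$ survives, and the genuine $O(t^2)$ bound does hold in the special case $Y=X$ that you use for $f^\star(X,D)\le f^\diamond(X,D)$. With that correction the proof is complete.
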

	
	\begin{rmk}
		It is worth mentioning that Proposition \ref{Prop_relationship_partialR_partialP} implies that any local minimizer $\tX$ of \ref{Prob_Ori} satisfies $0 \in \partialProj f(X)$.  Moreover,  Definition \ref{Defin_FOSP} coincides with the widely used optimality conditions \cite{yang2014optimality} for regular objective functions. Additionally, when $\partial f(X)$ is available, the projected subdifferential $\partialProj f(X)$ is usually much easier to compute than $\partialRie f(X)$. 
	\end{rmk}

	\begin{defin}[\cite{clarke1990optimization,rockafellar2009variational}]
		For any locally Lipschitz continuous function $\psi: \bb{R}^{n\times p} \to \bb{R}$, we say $X \in \bb{R}^{n\times p}$ is a first-order stationary point of function $\psi$ if and only if 
		\begin{equation*}
			0 \in \partial \psi(X). 
		\end{equation*}
	\end{defin}

	Finally, we define the following constants for the theoretical analysis of \ref{NEPen},
	\begin{itemize}
		\item $M_0 := \sup_{X \in \Omega_1  } ~  f(X) - \inf_{X \in \Omega_1 } f(X)$;
		\item $M_1 := \sup_{X \in \Omega_1, W \in \partial f(X) } ~ \norm{W}\ff.$
	\end{itemize}
	It is worth mentioning that both $M_0$ and $M_1$ are independent of the penalty parameter $\beta$.

	\section{Theoretical Properties}
	
	\subsection{Preliminary lemmas}
	In this subsection, we first present the expression for the Jacobian of the nonlinear mapping $\A$ in \eqref{A-map}, which could be directly derived from the expression of $\A$. Here we denote $\JA(X)$ as the Jacobian of the mapping $\A$ at $X$, which can be regarded as a linear mapping from $\bb{R}^{n\times p}$ to $\bb{R}^{n\times p}$. 
	The following proposition illustrates that for any $X \in \Omega_{1}$,  mapping $\A$ to $X$ can cubically reduce the feasibility violation. 
	\begin{prop}
		\label{Prop_ATA}
		For any given $X \in \bb{R}^{n\times p}$, it holds that 
		\begin{equation} \label{eq-ATA}
			\A(X)\tp\A(X) - I_p = \frac{1}{64}
			(X\tp X - I_p)^3 \big( 9 (X\tp X)^2 - 33 (X\tp X) + 64 I_p\big).
		\end{equation} 
		Moreover, for any $X\in \Omega_1$, we have  $\norm{\A(X)\tp\A(X) - I_p}_{\mathrm{F}} \,\leq \, 
		\norm{X\tp X - I_p}_{\mathrm{F}}^3$.
	\end{prop}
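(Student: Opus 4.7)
The plan is to reduce everything to a single polynomial in the $p\times p$ matrix $M := X^\top X$, and then exploit the fact that $M$ is symmetric PSD, which lets us pass to eigenvalues.

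First, I would observe that every factor in $\A(X)^\top \A(X)$ is a polynomial in $M$:
\begin{equation*}
\A(X)^\top \A(X) = \tfrac{1}{64}\left(15 I_p - 10 M + 3 M^2\right) M \left(15 I_p - 10 M + 3 M^2\right) = \tfrac{1}{64}\, M\left(15 I_p - 10 M + 3 M^2\right)^2,
\end{equation*}
where the last step uses that powers of $M$ commute. Expanding $(15 - 10 m + 3 m^2)^2$ as a scalar polynomial and multiplying by $m$ gives a degree-five polynomial $P(m) = 9 m^5 - 60 m^4 + 190 m^3 - 300 m^2 + 225 m$. To verify \eqref{eq-ATA}, I would compute $(m-1)^3(9 m^2 - 33 m + 64)$ by direct expansion and check that it equals $P(m) - 64$. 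Substituting $M$ back recovers the identity in matrix form, since the polynomial identity in $m$ holds for every eigenvalue of $M$ and $M$ is diagonalizable.

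For the norm bound, I would diagonalize $M = Q \Lambda Q^\top$ with eigenvalues $\lambda_1,\ldots,\lambda_p\ge 0$. Then $(M-I_p)^3$ and $9M^2 - 33 M + 64 I_p$ are simultaneously diagonalized, so
\begin{equation*}
\norm{\A(X)^\top \A(X) - I_p}_{\mathrm{F}}^2 = \frac{1}{64^2}\sum_{i=1}^{p} (\lambda_i - 1)^{6}\bigl(9\lambda_i^2 - 33\lambda_i + 64\bigr)^{2}.
\end{equation*}
The hypothesis $X \in \Omega_1$ gives $\sum_i (\lambda_i-1)^2 \leq 1$, hence $\lambda_i \in [0,2]$ for every $i$.

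The main (and only real) obstacle is to bound the scalar factor uniformly. I would show that the quadratic $q(\lambda) := 9\lambda^2 - 33\lambda + 64$ satisfies $|q(\lambda)| \leq 64$ on $[0,2]$, by checking its value at the endpoints ($64$ and $34$) and at the vertex $\lambda = 11/6$ (where $q = 135/4 < 64$). With this,
\begin{equation*}
\norm{\A(X)^\top \A(X) - I_p}_{\mathrm{F}}^2 \;\leq\; \sum_{i=1}^{p} (\lambda_i - 1)^{6}.
\end{equation*}
Finally, writing $s_i := (\lambda_i-1)^2 \ge 0$, the elementary inequality $\sum_i s_i^3 \leq \bigl(\sum_i s_i\bigr)^3$ (valid for nonnegative reals, since $(\sum s_i)^3$ expands to $\sum s_i^3$ plus nonnegative cross terms) yields $\sum_i (\lambda_i-1)^6 \leq \bigl(\sum_i (\lambda_i-1)^2\bigr)^3 = \norm{M - I_p}_{\mathrm{F}}^6$, giving the desired bound after taking square roots.
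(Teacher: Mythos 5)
Your proposal is correct and follows essentially the same route as the paper: both reduce the identity to the scalar factorization $m(15-10m+3m^2)^2-64=(m-1)^3(9m^2-33m+64)$ after diagonalizing $X^\top X$, and both obtain the norm bound from $\max_{0\le \lambda\le 2}|9\lambda^2-33\lambda+64|=64$. Your eigenvalue-by-eigenvalue bound via $\sum_i s_i^3 \le (\sum_i s_i)^3$ is just a more explicit spelling-out of the paper's use of $\norm{AB}_{\mathrm{F}} \le \norm{A}_{\mathrm{F}}\norm{B}_2$ together with $\norm{A^3}_{\mathrm{F}}\le\norm{A}_{\mathrm{F}}^3$.
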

	\begin{proof} By using the  economical SVD of $X = U\Sigma V\tp$, we have that
	$$
	\A(X)\tp \A(X) -I_p  = \frac{1}{64} V \Big(
	\Sigma^2 (15 I_p - 10 \Sigma^2 + 3 \Sigma^4)^2 -64 I_p \Big) V^T.
	$$
	By considering the polynomial function $p(x) = x (15-10 x+ 3x^2)^2 - 64$ and noting
	that $p(1) = p'(1) = p''(1) = 0$, we can factorize it as 
	$p(x) = (x-1)^3(9x^2 -33 x + 64).$ From here, we can readily obtain the required
	result in \eqref{eq-ATA}.
	
	Next, for any $X\in \Omega_1$ with the economical SVD of $X = U\Sigma V\tp$, we have
	that $\Sigma^2 \preceq 2 I_p$, and
	\begin{align*}
	&\norm{\A(X)\tp\A(X) - I_p}_{\mathrm{F}} \leq \frac{1}{64}\norm{X\tp X - I_p}_{\mathrm{F}}^3
	\norm{ 9 (X\tp X)^2 - 33 (X\tp X) + 64 I_p}_2
	\\
	 ={}& \frac{1}{64}\norm{X\tp X - I_p}_{\mathrm{F}}^3\; \max_{0\leq x \leq 2}\{
	 |9 x^2 - 33 x + 64|\}
	 =  \norm{X\tp X - I_p}_{\mathrm{F}}^3.
	\end{align*}
	This completes the proof.
	\end{proof}
	
	Next we present the Jacobian of $\A$ in the following proposition. 
	\begin{prop}
		\label{Prop_diff_A}
		For any $X\in \bb{R}^{n\times p}$, it holds that for any $D\in \bb{R}^{n\times p}$
		\begin{equation*}
			\JA(X)[D] = \frac{1}{8}D \left( 15 I_p - 10 X\tp X + 3(X\tp X)^2\right) - X \Phi(X\tp D) + \frac{3}{2}X\Phi(\Phi(X\tp D)(X\tp X - I_p)). 
		\end{equation*}
		Moreover, for any $X \in \ca{S}_{n,p}$, we have
		\begin{equation*}
			\JA(X)[D] = D - X\Phi(D\tp X). 
		\end{equation*}
	\end{prop}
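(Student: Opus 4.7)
The plan is to compute $\JA(X)[D] = \lim_{t\to 0} \frac{1}{t}(\A(X+tD) - \A(X))$ directly by viewing $\A$ as $\A(X) = \tfrac{1}{8} X\,P(X\tp X)$ with the polynomial $P(M) = 15 I_p - 10 M + 3 M^2$, and then applying the product rule in two pieces: one linear factor $X$ and one polynomial factor $P(X\tp X)$.

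First I would expand $(X+tD)\tp(X+tD) = X\tp X + 2t\,\Phi(X\tp D) + t^2 D\tp D$, so that to first order in $t$ the perturbation of $X\tp X$ is $2\Phi(X\tp D)$. Applying $P$ and using $P'(M)[H] = -10 H + 3(MH + HM)$ yields
\begin{equation*}
P((X+tD)\tp(X+tD)) = P(X\tp X) + 2t\bigl(-10\,\Phi(X\tp D) + 3\bigl(X\tp X\,\Phi(X\tp D) + \Phi(X\tp D)\,X\tp X\bigr)\bigr) + O(t^2).
\end{equation*}
Multiplying by $\tfrac{1}{8}(X+tD)$ on the left and isolating the coefficient of $t$ gives
\begin{equation*}
8\,\JA(X)[D] = D\,P(X\tp X) + X\bigl[-20\,\Phi(X\tp D) + 6\bigl(X\tp X\,\Phi(X\tp D) + \Phi(X\tp D)\,X\tp X\bigr)\bigr].
\end{equation*}

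The remaining step is purely algebraic: match the bracketed term against $-8\,\Phi(X\tp D) + 12\,\Phi(\Phi(X\tp D)(X\tp X - I_p))$. Setting $S := \Phi(X\tp D)$, which is symmetric, I would use that $\Phi$ is the identity on symmetric matrices to expand
\begin{equation*}
12\,\Phi(S(X\tp X - I_p)) = 6\bigl(S(X\tp X - I_p) + (X\tp X - I_p)S\bigr) = -12 S + 6\bigl(S\,X\tp X + X\tp X\,S\bigr),
\end{equation*}
so that $-8 S + 12\,\Phi(S(X\tp X - I_p)) = -20 S + 6(X\tp X\,S + S\,X\tp X)$, which coincides with the bracket above. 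Dividing by $8$ yields the first formula.

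For the second claim, I would substitute $X\tp X = I_p$. Then $P(X\tp X) = (15 - 10 + 3) I_p = 8 I_p$, so the first term collapses to $D$. The last term vanishes because its argument contains the factor $X\tp X - I_p = 0$, and the middle term becomes $-X\,\Phi(X\tp D) = -X\,\Phi(D\tp X)$ since $\Phi(M) = \Phi(M\tp)$. Combining gives $\JA(X)[D] = D - X\,\Phi(D\tp X)$. The only delicate point is the symmetric-product bookkeeping in the middle step; once one observes that $S$ is symmetric so that $\Phi$ drops out, the identities collapse without any further computation.
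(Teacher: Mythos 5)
Your computation is correct and is precisely the ``straightforward but tedious calculation'' that the paper omits: expand $(X+tD)\tp(X+tD)$ to first order, apply the product rule to $X\,P(X\tp X)$ with $P(M)=15I_p-10M+3M^2$, and match the resulting bracket against $-8\Phi(X\tp D)+12\,\Phi\bigl(\Phi(X\tp D)(X\tp X-I_p)\bigr)$. One cosmetic quibble: the step you attribute to ``$\Phi$ is the identity on symmetric matrices'' actually rests on the transpose identity $\bigl(S(X\tp X-I_p)\bigr)\tp=(X\tp X-I_p)S$ for a product of two symmetric matrices (the product itself need not be symmetric), but the displayed algebra uses exactly that and is correct, as is the specialization to $X\tp X=I_p$.
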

	The statements in Proposition \ref{Prop_diff_A} can be verified by straightforward but tedious calculations, and hence its proof is omitted.

	Based on the formulation of $\JA$, the following lemma illustrates that $\JA(X)$ is a self-adjoint mapping in $\bb{R}^{n\times p}$. 
	\begin{lem}
		For any given $X\in \bb{R}^{n\times p}$, the mapping $\JA(X)$ is self-adjoint. 
	\end{lem}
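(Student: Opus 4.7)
The plan is to realize the map $\A$ as the Euclidean gradient of a scalar potential $\phi:\bb{R}^{n\times p}\to \bb{R}$. Once this is done, $\JA(X)$ is the Hessian of $\phi$ at $X$, and the conclusion follows immediately from the symmetry of the Hessian for a $C^2$ function (Schwarz's theorem). Since $\phi$ will be a polynomial in the entries of $X$, this regularity is automatic and avoids having to manipulate the three-term expression of $\JA$ in Proposition \ref{Prop_diff_A} directly.

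The first step is to guess the potential by inspection of $\A(X) = \frac{1}{8}\bigl(15 X - 10 X(X\tp X) + 3 X (X\tp X)^2\bigr)$. Matching each monomial with a standard trace derivative suggests
\begin{equation*}
\phi(X) \;=\; \frac{15}{16}\,\tr(X\tp X) \;-\; \frac{5}{16}\,\tr\bigl((X\tp X)^2\bigr) \;+\; \frac{1}{16}\,\tr\bigl((X\tp X)^3\bigr).
\end{equation*}
Using the standard identities $\nabla_X \tr(X\tp X)=2X$, $\nabla_X \tr((X\tp X)^2)=4X(X\tp X)$, and $\nabla_X \tr((X\tp X)^3)=6X(X\tp X)^2$ (each obtained by expanding $\phi(X+tD)$ to first order in $t$ and applying the cyclic property of the trace), a short calculation shows $\nabla \phi(X) = \A(X)$ for every $X\in \bb{R}^{n\times p}$.

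The second step is to conclude. Since $\phi$ is a polynomial, it is $C^\infty$, so its Hessian $\nabla^2\phi(X)$ is a self-adjoint linear map on $\bb{R}^{n\times p}$ for every $X$. Because $\A=\nabla\phi$, the Jacobian $\JA(X)$ coincides with $\nabla^2\phi(X)$, hence is self-adjoint. Equivalently, $\inner{\JA(X)[D_1],D_2}=\inner{D_1,\JA(X)[D_2]}$ for all $D_1,D_2\in \bb{R}^{n\times p}$.

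I do not expect any genuine obstacle here; the only nontrivial step is the identification of the correct potential $\phi$, which is essentially a matching of coefficients. If one preferred a direct argument, one could instead verify self-adjointness term by term in the formula of Proposition \ref{Prop_diff_A} (the first summand involves multiplication by the symmetric matrix $\tfrac{1}{8}(15I_p-10X\tp X+3(X\tp X)^2)$ on the right, while the remaining two summands can be checked using the cyclic property of the trace and the symmetry of $\Phi(\cdot)$ and of $X\tp X - I_p$), but this is considerably more tedious than the potential-based argument above.
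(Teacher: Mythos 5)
Your proof is correct, and it takes a genuinely different route from the paper. The paper verifies self-adjointness directly from the explicit formula in Proposition \ref{Prop_diff_A}: it checks, term by term, that $\inner{\JA(X)[D],W}=\inner{\JA(X)[W],D}$ using the cyclic property of the trace and the symmetry of $X\tp X$ and $\Phi(\cdot)$. You instead exhibit $\A$ as a gradient field: with $\phi(X)=\frac{15}{16}\tr(X\tp X)-\frac{5}{16}\tr((X\tp X)^2)+\frac{1}{16}\tr((X\tp X)^3)$ one indeed has $\nabla\phi=\A$ (your coefficients check out against the standard identities $\nabla\tr((X\tp X)^k)=2kX(X\tp X)^{k-1}$), so $\JA(X)=\nabla^2\phi(X)$ is symmetric by Schwarz's theorem. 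Your argument is shorter and more conceptual --- it explains \emph{why} $\JA(X)$ is self-adjoint (the mapping $\A$ is conservative) rather than merely confirming it, and the observation that $\A$ is a gradient could be reused elsewhere. The paper's computation, on the other hand, is self-contained and exercises exactly the trace identities that are needed again in Lemma \ref{Le_JA_JC}, so it fits the surrounding development; but as a proof of this lemma alone, your potential-based argument is the cleaner one.
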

	
	\begin{proof}
		For any $D,W\in \bb{R}^{n\times p}$, it is easy to verify that 
		\begin{align*}
			& \tr\left(W\tp D \left( {15} I_p - {10}X\tp X + {3} (X\tp X)^2 \right)\right) = \tr\left(D\tp W \left( {15} I_p - {10}X\tp X + {3} (X\tp X)^2 \right)\right),\\
			& \tr\left( W\tp X \Phi(X\tp D) \right) = \tr\left( D\tp X \Phi(X\tp W) \right).
		\end{align*}
		Moreover, through direct calculation, we achieve
		\begin{equation*}
			\begin{aligned}
				&\tr\left(W\tp X\Phi(\Phi(X\tp D)X\tp X)\right)= \tr\left(\Phi(X\tp W)\Phi(X\tp D)X\tp X\right)\\
				={}& \tr\left(X\tp X\Phi(X\tp W)\Phi(X\tp D)\right) =  \tr\left( D\tp X\Phi(\Phi(X\tp W)X\tp X) \right).
			\end{aligned}
		\end{equation*}
		Therefore, we conclude that 
		\begin{equation*}
			\begin{aligned}
				&\inner{\JA(D),W} 
				= \inner{\frac{1}{8}D \left( 15 I_p - 10 X\tp X + 3
				(X\tp X)^2 \right) - \frac{5}{2}X \Phi(X\tp D) +\frac{3}{2}X\Phi(\Phi(X\tp D)X\tp X),W}\\
				={}& \inner{\frac{1}{8}W \left( 15 I_p - 10X\tp X + 3 (X\tp X)^2 \right) - \frac{5}{2}X \Phi(X\tp W) +\frac{3}{2}X\Phi(\Phi(X\tp W)X\tp X),D}\\
				={}& \inner{\JA(W),D}, 
			\end{aligned}
		\end{equation*}
		and this completes the proof.
	\end{proof}

	In the rest of this subsection, we aim to prove that $\partial h$ can be explicitly formulated from $\partial f$. We first present the following theorem, which is a direct corollary from \cite[Theorem 2.3.10]{clarke1990optimization} that characterizes $\partial g(X)$.
	\begin{theo}{\cite[Theorem 2.3.10]{clarke1990optimization}}
		\label{The_Clarke}
		For any given $X \in \bb{R}^{n\times p}$, it holds that 
		\begin{equation*}
			\partial (f\circ \A)(X) \subseteq \JA(X)[\partial f(\A(X))].
		\end{equation*}
		Moreover, when $\A$ maps any neighborhood of $X$ to a  set which is dense in a neighborhood of  $\A(X)$, we have $\partial (f\circ \A)(X) = \JA(X)[\partial f(\A(X))]$.
		
	\end{theo}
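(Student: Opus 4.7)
The plan is to establish the inclusion and the equality through the support-function characterization of the Clarke subdifferential, working entirely in terms of the generalized directional derivative. Recall that $\partial \psi(X) = \{W : \inner{W,D} \leq \psi^\circ(X,D) \text{ for all } D\}$, and so the inclusion $\partial(f\circ\A)(X) \subseteq \JA(X)[\partial f(\A(X))]$ will follow from the pointwise inequality
\begin{equation*}
(f\circ \A)^\circ(X, D) \;\leq\; f^\circ(\A(X),\, \JA(X)[D]) \qquad \text{for all } D \in \bb{R}^{n\times p},
\end{equation*}
combined with a standard duality argument (the right-hand side, as a function of $D$, is the support function of $\JA(X)[\partial f(\A(X))]$ since $\JA(X)$ is linear and $f^\circ(\A(X),\cdot)$ is the support function of $\partial f(\A(X))$).

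To prove that pointwise inequality, first I would use smoothness of $\A$ to write, for $Y$ near $X$ and $t \downarrow 0$,
\begin{equation*}
\A(Y+tD) \;=\; \A(Y) + t\,\JA(Y)[D] + o(t),
\end{equation*}
where the $o(t)$ is uniform for $Y$ in a compact neighborhood of $X$ (since $\A$ is $C^\infty$). Setting $Z := \A(Y)$ and $D_Y := \JA(Y)[D]$, the difference quotient becomes
\begin{equation*}
\frac{f(\A(Y+tD)) - f(\A(Y))}{t} \;=\; \frac{f(Z + tD_Y + o(t)) - f(Z)}{t}.
\end{equation*}
Using the local Lipschitz constant of $f$ to absorb the $o(t)$ term and the continuity $\JA(Y)[D] \to \JA(X)[D]$ as $Y \to X$, taking $\limsup$ in $(Y,t)$ and observing that $\A(Y) \to \A(X)$ along any such sequence gives exactly $(f\circ\A)^\circ(X,D) \leq f^\circ(\A(X), \JA(X)[D])$.

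For the equality under the density hypothesis, the task is the reverse inequality $f^\circ(\A(X), \JA(X)[D]) \leq (f\circ\A)^\circ(X, D)$. Here I would exploit the density assumption to restrict the outer limsup in the definition of $f^\circ(\A(X), \JA(X)[D])$ to sequences of the form $Z_k = \A(Y_k)$ with $Y_k \to X$: by density, any sequence $Z \to \A(X)$ can be approximated, to within error absorbable by the local Lipschitz constant of $f$, by points in the image of a neighborhood of $X$ under $\A$. Along such sequences, the previous Taylor expansion can be inverted to write $Z_k + t\JA(X)[D]$ as $\A(Y_k + tD) + o(t)$, and another application of the Lipschitz bound on $f$ converts the difference quotient defining $f^\circ$ into the one defining $(f\circ\A)^\circ$.

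The main obstacle I expect is the equality direction: the inclusion direction is a clean consequence of the first-order expansion of $\A$ and the Lipschitz bound on $f$, but the reverse direction genuinely requires the density condition, and care must be taken that the approximation $Z_k \approx \A(Y_k)$ is good enough (of order $o(t)$ relative to the step $t$) to survive the Lipschitz estimate uniformly. This is precisely the technical role played by the hypothesis that $\A$ maps neighborhoods of $X$ to sets dense in neighborhoods of $\A(X)$; without it one only gets the inclusion.
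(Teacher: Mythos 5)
The paper offers no proof of this statement: it is imported wholesale as Theorem 2.3.10 of Clarke's monograph, so your reconstruction cannot be compared against an in-paper argument, only checked on its own terms. On those terms it is essentially the standard support-function proof of Clarke's chain rule, and it is sound. The inclusion direction works exactly as you describe: since $\A$ is a polynomial, the expansion $\A(Y+tD)=\A(Y)+t\,\JA(Y)[D]+O(t^2)$ holds uniformly on a compact neighborhood of $X$, the local Lipschitz constant of $f$ absorbs both the $O(t^2)$ term and the discrepancy $\JA(Y)[D]-\JA(X)[D]$, and the limsup over image points $Z=\A(Y)$ is dominated by the limsup over all $Z\to\A(X)$. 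For the equality direction your plan is also correct, though the ``care'' you flag deserves to be spelled out: one needs a diagonal selection of preimages, i.e.\ neighborhoods $\ca{B}(X,1/j)$ whose images under $\A$ are dense near $\A(X)$, and for an extremizing sequence $(Z_k,t_k)$ a choice $Y_k\in \ca{B}(X,1/j(k))$ with $j(k)\to\infty$ and $\norm{\A(Y_k)-Z_k}\ff\le t_k^2$, so that the substitution error survives division by $t_k$.

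The one genuine omission is in the duality step. The function $D\mapsto f^\circ(\A(X),\JA(X)[D])=\sup\{\inner{\JA(X)^*[W],\,D}:W\in\partial f(\A(X))\}$ is the support function of $\JA(X)^*[\partial f(\A(X))]$, the image under the \emph{adjoint} of the Jacobian --- which is how Clarke's theorem is normally stated. Identifying this set with $\JA(X)[\partial f(\A(X))]$, as the statement here does, requires the self-adjointness of $\JA(X)$, which the paper establishes in the lemma immediately preceding this theorem. You should invoke that fact explicitly; for a general constraint dissolving map your argument, as written, yields the conclusion with $\JA(X)$ replaced by its adjoint.
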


	Then following lemma illustrates that $\A$ is a homeomorphism from $\bb{R}^{n\times p}$ to $\bb{R}^{n\times p}$. 
	\begin{lem}
		\label{Le_dense_surjective}
		The mapping  $\A$ in \eqref{A-map} is a homeomorphism from $\bb{R}^{n\times p}$ to $\bb{R}^{n\times p}$. Moreover, for any given $X \in \bb{R}^{n\times p}$ and any $\varepsilon > 0$, there exists a $\delta > 0$ such that $\ca{B}(\A(X), \delta) \subset \A(\ca{B}(X, \varepsilon) )$.
	\end{lem}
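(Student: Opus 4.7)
The plan is to reduce the problem to a scalar analysis via the singular value decomposition and then exhibit a concrete continuous inverse. The key observation is that $\A(X) = X\,q(X\tp X)$, where $q(t) := \frac{1}{8}(15 - 10t + 3t^2)$ is a scalar polynomial with strictly positive values on $\bb{R}$ (its discriminant $100-180$ is negative). Hence for any $X$ with economical SVD $X = U\Sigma V\tp$, one obtains $\A(X) = U\,\phi(\Sigma)\,V\tp$, where $\phi(\sigma) := \sigma\,q(\sigma^2) = \frac{1}{8}\sigma(15-10\sigma^2+3\sigma^4)$ acts entrywise on the diagonal. A direct differentiation gives $\phi'(\sigma) = \frac{15}{8}(1-\sigma^2)^2 \geq 0$, with zeros only at $\sigma = \pm 1$, so $\phi$ is strictly increasing on $[0,\infty)$, with $\phi(0)=0$ and $\phi(\sigma)\to\infty$. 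Consequently, $\phi\colon[0,\infty)\to[0,\infty)$ is a continuous bijection with continuous inverse $\phi^{-1}$.

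Next I would construct the inverse of $\A$ without relying on any choice of SVD. Define the scalar function $\psi\colon[0,\infty)\to(0,\infty)$ by
\begin{equation*}
\psi(\lambda) \;=\; \frac{\phi^{-1}(\sqrt{\lambda})}{\sqrt{\lambda}}\quad\text{for }\lambda>0,\qquad \psi(0) \;=\; \tfrac{8}{15},
\end{equation*}
which is continuous on $[0,\infty)$ (the extension at $0$ uses $\phi^{-1}(y) = \frac{8}{15}y + O(y^3)$ near $0$). By functional calculus, $\psi$ extends to a continuous map on symmetric positive semidefinite matrices. I then propose the candidate
\begin{equation*}
\A^{-1}(Y) \;:=\; Y\,\psi(Y\tp Y),
\end{equation*}
which is continuous in $Y$ by composition. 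To verify $\A\circ\A^{-1} = \mathrm{id}$, I set $X := Y\psi(Y\tp Y)$ and use that $\psi(Y\tp Y)$ commutes with $Y\tp Y$ to compute $X\tp X = \psi(Y\tp Y)^2\,Y\tp Y$; diagonalising $Y\tp Y$ reduces the identity $\psi(Y\tp Y)\,q(\psi(Y\tp Y)^2 Y\tp Y) = I_p$ to checking, for each eigenvalue $\lambda$, that $\phi(\phi^{-1}(\sqrt{\lambda})) = \sqrt{\lambda}$, which holds by definition.

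For $\A^{-1}\circ\A = \mathrm{id}$, the positivity of $q$ is essential: given $X$ with SVD $U\Sigma V\tp$, both $q(X\tp X)$ and $q(\Sigma^2)$ are invertible, and eigenvalues of $Y\tp Y = q(\Sigma^2)^2\Sigma^2$ are $\phi(\sigma_i)^2$; then $\psi(Y\tp Y)$ has eigenvalues $\phi^{-1}(\phi(\sigma_i))/\phi(\sigma_i) = 1/q(\sigma_i^2)$ (with the correct value at $\sigma_i = 0$), so $\psi(Y\tp Y) = q(X\tp X)^{-1}$ and hence $Y\psi(Y\tp Y) = X\,q(X\tp X)\,q(X\tp X)^{-1} = X$. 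This establishes that $\A$ is a homeomorphism. The second assertion is then automatic: since $\A^{-1}$ is continuous, $\A$ is an open map, so $\A(\ca{B}(X,\varepsilon))$ is an open set containing $\A(X)$, from which the existence of $\delta>0$ with $\ca{B}(\A(X),\delta)\subset\A(\ca{B}(X,\varepsilon))$ follows immediately.

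The main obstacle is handling the rank-deficient case cleanly: the SVD is not unique when singular values coincide or vanish, so the inverse cannot be defined through the SVD directly. Routing everything through $\psi(Y\tp Y)$ and the spectral calculus on symmetric matrices sidesteps this, but requires verifying that $\psi$ genuinely extends continuously across $\lambda=0$ and that the scalar identities $\phi\circ\phi^{-1}=\mathrm{id}$ translate correctly into the matrix identities $\A\circ\A^{-1}=\mathrm{id}$ and $\A^{-1}\circ\A=\mathrm{id}$—both of which hinge on the strict positivity of $q$ on all of $\bb{R}$.
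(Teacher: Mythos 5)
Your proof is correct, and it shares the paper's core skeleton: reduce $\A$ to the scalar quintic $\phi(\sigma)=\frac{1}{8}\sigma(15-10\sigma^2+3\sigma^4)$ via the SVD, note $\phi'(\sigma)=\frac{15}{8}(1-\sigma^2)^2\geq 0$ so $\phi$ is a monotone bijection, and deduce openness of $\A$ from continuity of $\A^{-1}$. Where you genuinely depart from the paper is in how the inverse is realized: the paper defines $\A^{-1}(Y)=U_Y\,\phi^{-1}(\Sigma_Y)V_Y\tp$ through an SVD of $Y$ and simply asserts that this is well-defined and continuous, whereas you give the basis-free formula $\A^{-1}(Y)=Y\psi(Y\tp Y)$ with $\psi(\lambda)=\phi^{-1}(\sqrt{\lambda})/\sqrt{\lambda}$ extended by $\psi(0)=\tfrac{8}{15}$. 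This buys you two things the paper's argument glosses over: independence of the (non-unique) SVD, and continuity of $\A^{-1}$, which does not follow from the SVD representation alone since the factors $U_Y,V_Y$ do not depend continuously on $Y$, but does follow from continuity of the functional calculus $M\mapsto\psi(M)$ on positive semidefinite matrices. Your verification of the two composition identities by matching eigenvalues of commuting functions of $X\tp X$ (resp.\ $Y\tp Y$) is sound, and the observation that $q(t)=\frac{1}{8}(3t^2-10t+15)$ is strictly positive on all of $\bb{R}$ is exactly what makes $\psi(Y\tp Y)=q(X\tp X)^{-1}$ legitimate in the rank-deficient case. In short: same strategy, but your construction of the inverse is more careful than the paper's and closes the well-definedness and continuity gaps.
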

	\begin{proof}
		Consider the auxiliary function $\psi(t) = t(\frac{15}{8} - \frac{5}{4}t^2 + \frac{3}{8}t^4 )$. Since $\psi'(t) = \frac{15}{8}(t^2 - 1)^2$, we can conclude that $\psi(t)$ is non-decreasing in $\bb{R}$ and $\psi'(t)$ is positive except for $t = \pm 1$. As a result, $\psi(t)$ is an injection from $\bb{R}$ to $\bb{R}$, and it is easy to verify that $\psi$ is a bijection and  $\psi^{-1}$ is continuous in $\bb{R}$.
		
		Let $X = U\Sigma V\tp$ be the rank-revealing singular value decomposition of $X$, where both $U \in \bb{R}^{n\times p}$ and $V \in \bb{R}^{p\times p}$ are orthogonal matrices, and $\Sigma \in \bb{R}^{p\times p}$ is diagonal. Then from the definition of $\psi$, we can  conclude that $\A(X) = U \psi(\Sigma)V\tp$, and $\A$ is an injection. Moreover,  for any given $Y \in \bb{R}^{n\times p}$ and its singular value decomposition $Y = U_Y \Sigma_Y V_Y\tp$, from the definition of $\psi^{-1}$, it holds that $Y = \A(U_Y \psi^{-1}(\Sigma_Y) V_Y\tp)$.  As a result, $\A$ is a bijection and $\A^{-1}$ is well-defined and continuous in $\bb{R}^{n\times p}$. 
		
		Therefore, $\A$ maps any open set to an open set due to the continuity of $\A^{-1}$. As a result, we can conclude that for any $X \in \bb{R}^{n\times p}$ and any $\varepsilon > 0$,  $\A(\ca{B}(X, \varepsilon) )$ is an open set that contains $\A(X)$. Therefore, there exists $\delta > 0$ such that $\ca{B}(\A(X), \delta) \subset \A(\ca{B}(X, \varepsilon) )$ and thus we complete the proof.

	\end{proof}

	Together with Theorem \ref{The_Clarke}, Lemma \ref{Le_dense_surjective}  shows that  the chain rule holds for $f\circ \A$, as summarized in the following proposition. Therefore, the Clarke subdifferential of $g$ is easy to achieve from $\partial f$ even if $f$ is only assumed to be locally Lipschitz continuous and without any conditions on its regularity. 
	\begin{prop}
		\label{Prop_chain_rule}
		For any given $X \in \bb{R}^{n\times p}$, it holds that $\partial g(X) = \JA(X)[\partial f(\A(X))]$, and $\partial h(X) = \JA(X)[\partial f(\A(X))] + \beta X(X\tp X - I_p)$. 
	\end{prop}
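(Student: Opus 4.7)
The plan is to combine the two ingredients already established: the density property of $\A$ in Lemma \ref{Le_dense_surjective} together with Clarke's chain rule in Theorem \ref{The_Clarke}, followed by an elementary sum rule to handle the smooth penalty term in \ref{NEPen}.

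First, I would dispatch the identity $\partial g(X) = \JA(X)[\partial f(\A(X))]$. Since $\A$ is a homeomorphism from $\bb{R}^{n\times p}$ to $\bb{R}^{n\times p}$ by Lemma \ref{Le_dense_surjective}, every ball $\ca{B}(X,\varepsilon)$ is mapped by $\A$ to an open set containing $\A(X)$; in particular, $\A(\ca{B}(X,\varepsilon))$ contains a ball $\ca{B}(\A(X),\delta)$ and is therefore dense in a neighborhood of $\A(X)$. This is precisely the hypothesis required by the second assertion of Theorem \ref{The_Clarke}, which then upgrades the generic inclusion $\partial(f\circ \A)(X) \subseteq \JA(X)[\partial f(\A(X))]$ to an equality. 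By definition $g = f\circ \A$, so the first claim follows.

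Next I would treat $h$. Write $h(X) = g(X) + q(X)$ with $q(X) := \frac{\beta}{4}\norm{X\tp X - I_p}\fs$. The map $q$ is polynomial, hence $C^\infty$, with Euclidean gradient $\nabla q(X) = \beta X(X\tp X - I_p)$ obtained by a direct differentiation. Because one summand ($q$) is continuously differentiable on $\bb{R}^{n\times p}$, the Clarke sum rule holds with equality (see \cite[Proposition 2.3.3 and Corollary 3]{clarke1990optimization}), yielding $\partial h(X) = \partial g(X) + \{\nabla q(X)\}$. Substituting the expression for $\partial g(X)$ already obtained gives the second claim.

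The only potentially delicate point is the verification of the density hypothesis in Theorem \ref{The_Clarke}, since without it only the inclusion (not the equality) is available; but this is exactly what Lemma \ref{Le_dense_surjective} provides, so the proof reduces to an essentially one-line combination of the two preceding results. No nontrivial obstacle remains.
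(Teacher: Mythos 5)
Your proposal is correct and follows essentially the same route as the paper: Lemma \ref{Le_dense_surjective} supplies the density hypothesis needed to apply the equality case of Theorem \ref{The_Clarke} to $g = f\circ\A$, and the smooth penalty term is absorbed via the exact Clarke sum rule for a $C^1$ summand. You merely spell out the steps that the paper's one-line proof leaves implicit, and all the details (including $\nabla\bigl(\tfrac{\beta}{4}\norm{X\tp X - I_p}\fs\bigr) = \beta X(X\tp X - I_p)$) check out.
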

	\begin{proof}
		From Lemma \ref{Le_dense_surjective}, and Theorem \ref{The_Clarke} (Theorem 2.3.10 in \citep{clarke1990optimization}), we  have that the chain rule holds for $f(\A(X))$ and 
		this leads to the desired result. 
	\end{proof}

	\begin{rmk}
		From the definition of $h(X)$ and Lemma \ref{Prop_chain_rule}, we can easily verify that for any $X \in \ca{S}_{n,p}$, it holds that $h(X) = f(X)$ and $\partialProj f(X) = \partial h(X)$.
	\end{rmk}

	\begin{rmk}
		Among all the fifth-order polynomials of $X$ that are constraint dissolving mappings for \ref{Prob_Ori}, the mapping $\A$ in \ref{NEPen} is the unique homeomorphic constraint dissolving mapping. 
		All the other choices of the homeomorphic constraint dissolving mappings are either higher-order polynomials of $X$ or are not polynomials of $X$, hence they are more expensive to compute than the mapping $\A$ in \ref{NEPen}.   More precisely, let 
		\begin{equation}
			\tilde{\A}(X) := X\left( \alpha_0 I_p + \alpha_1 (X\tp X) + \alpha_2(X\tp X)^2 \right).
		\end{equation}
		The formulation of $\tilde{\A}$ captures all the possible choices of the fifth-order polynomials of $X$  that are constraint dissolving mappings for \ref{Prob_Ori}. By Condition \ref{cond:1}, the coefficients should satisfy
		\begin{equation}
			\alpha_0 + \alpha_1 + \alpha_2 = 1, \qquad \text{and} \quad \alpha_0 + 3\alpha_1 + 5\alpha_2 = 0.
		\end{equation}
		Moreover, when $\tilde{A}$ is a homeomorphism over $\bb{R}^{n\times p}$, the functions $\psi(t) = \alpha_0 t + \alpha_1t^3 +\alpha_2t^5$ should be a homeomorphism over $\bb{R}$. Therefore, the only possible choice for is $(\alpha_0, \alpha_1, \alpha_2) = (\frac{15}{8}, -\frac{5}{4}, \frac{3}{8})$, which leads to the formulation of \ref{NEPen}.  
	\end{rmk}

	\subsection{Relationships on stationary points}
	\label{Subsection_equivalence_stationary}
	In this section, we build up the relationships between the first-order stationary points of \ref{Prob_Ori} and \ref{NEPen}. 
	
	\begin{lem}
		\label{Le_JA_JC}
		For any given $X \in \bb{R}^{n\times p}$ and any $W \in \bb{R}^{n\times p}$, it holds that
		\begin{equation*}
			\inner{\JA(X) [W], X(X\tp X - I_p)} = \frac{15}{8} \inner{\Phi(X\tp W), (X\tp X - I_p)^3}. 
		\end{equation*}
	\end{lem}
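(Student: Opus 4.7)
The plan is to substitute the explicit formula for $\JA(X)[W]$ from Proposition \ref{Prop_diff_A} into the left-hand side and reduce everything to a trace of polynomials in $X\tp X$ multiplied by $\Phi(X\tp W)$. For convenience I would use the equivalent form that already appears in the self-adjointness proof,
\begin{equation*}
\JA(X)[W] = \tfrac{1}{8}W\bigl(15 I_p - 10\, X\tp X + 3 (X\tp X)^2\bigr) - \tfrac{5}{2}X\Phi(X\tp W) + \tfrac{3}{2}X\Phi\bigl(\Phi(X\tp W)\, X\tp X\bigr),
\end{equation*}
and introduce the shorthand $A := X\tp X$ and $S := \Phi(X\tp W)$, so that the target right-hand side reads $\tfrac{15}{8}\tr\bigl(S(A-I_p)^3\bigr)$.

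Next I would take the inner product with $X(A-I_p)$ term by term, using $\tr$-cyclicity to rewrite each piece as a trace against $X\tp W$ (for the first term) or directly against $S$ (for the last two). The key observation that eliminates the antisymmetric part of $X\tp W$ is that $(A-I_p)$, $A$, and any polynomial in $A$ all commute and are symmetric, so in the first term the factor $(A-I_p)(15 I_p - 10 A + 3 A^2)$ is a symmetric matrix; hence the identity $\tr(MN) = \tr(\Phi(M) N)$ for symmetric $N$ lets me replace $X\tp W$ by $S$. The third term reduces analogously after pushing $A(A-I_p)$ past the outer $\Phi$.

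After this reduction the three contributions combine into a single trace of the form $\tr\bigl(S(A-I_p)\, P(A)\bigr)$, where $P(A)$ is the polynomial
\begin{equation*}
P(A) \;=\; \tfrac{1}{8}\bigl(15 I_p - 10 A + 3 A^2\bigr) - \tfrac{5}{2} A + \tfrac{3}{2} A^2 \;=\; \tfrac{15}{8}\bigl(I_p - 2A + A^2\bigr) \;=\; \tfrac{15}{8}(A-I_p)^2.
\end{equation*}
I expect this coefficient-matching to be the whole content of the lemma: the three clearly unrelated-looking terms from $\JA$ conspire to factor another $(A-I_p)^2$, which is really the polynomial echo of the cubic vanishing of $\A(X)\tp\A(X) - I_p$ exploited in Proposition \ref{Prop_ATA}. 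Once this factorization is in hand, the conclusion
$\inner{\JA(X)[W], X(A-I_p)} = \tfrac{15}{8}\tr\bigl(S(A-I_p)^3\bigr) = \tfrac{15}{8}\inner{\Phi(X\tp W),(X\tp X - I_p)^3}$
is immediate.

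The main obstacle is not conceptual but bookkeeping: one has to be careful that the map $\Phi$ wrapped inside a $\Phi$ in the third term of $\JA$ can indeed be peeled off against the symmetric matrix $A(A-I_p)$, and that the coefficient $-\tfrac{5}{2}$ in the rewritten $\JA$ (versus the $-1$ that appears in the form in Proposition \ref{Prop_diff_A}) is consistently used. Modulo that care, the proof is a short direct computation driven by the polynomial identity above.
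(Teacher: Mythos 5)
Your proposal is correct and follows essentially the same route as the paper: the paper's proof likewise substitutes the explicit Jacobian from Proposition \ref{Prop_diff_A}, pairs it against $X(X\tp X - I_p)$, and arrives at the same factorization $\tfrac{15}{8}\bigl((X\tp X)^2 - 2X\tp X + I_p\bigr)(X\tp X - I_p) = \tfrac{15}{8}(X\tp X - I_p)^3$ that you identify as $P(A)(A-I_p)$. The coefficient computation $\tfrac{1}{8}(15 I_p - 10A + 3A^2) - \tfrac{5}{2}A + \tfrac{3}{2}A^2 = \tfrac{15}{8}(A-I_p)^2$ checks out, as does the symmetrization step replacing $X\tp W$ by $\Phi(X\tp W)$ against symmetric polynomials in $A$.
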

	\begin{proof}
		From Proposition \ref{Prop_diff_A}, we can obtain after some tedious calculations that
		\begin{equation*}
			\begin{aligned}
				&\inner{\JA(X) [W], X(X\tp X - I_p)}
				=\inner{X\tp \JA(X) [W],(X\tp X - I_p)}\\
				=&\inner{\Phi(X\tp W),\frac{15}{8}((X\tp X)^2-2X\tp X+I_p)(X\tp X - I_p)} \\
				=& \frac{15}{8}\inner{\Phi(X\tp W), (X\tp X - I_p)^3},
			\end{aligned}
		\end{equation*}
		and  the proof is completed.
	\end{proof}
	The following theorem illustrates that any first-order stationary point of \ref{NEPen} is a first-order stationary point of \ref{Prob_Ori}. 
	\begin{theo}
		\label{The_Equivalence_local}
		
		For any given $\beta > 0$, $r \in \left(0,  \frac{\beta}{2\beta + 8M_1} \right)$, and any $X \in \Omega_{r}$, it holds that 
		\begin{equation*}
			\mathrm{dist}(0, \partial h(X)) \geq \frac{\beta}{4} \norm{X\tp X - I_p}\ff. 
		\end{equation*}
		Furthermore, when $X^* \in \Omega_{r}$ is a first-order stationary point of \ref{NEPen}, then $X^*$ is a first-order stationary point of \ref{Prob_Ori}. 
	\end{theo}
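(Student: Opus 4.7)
The plan is to lower bound $\norm{\xi}\ff$ for an arbitrary $\xi \in \partial h(X)$ by probing in a carefully chosen direction. The penalty $\tfrac{\beta}{4}\norm{X\tp X - I_p}\fs$ has gradient $\beta X(X\tp X - I_p)$, which lies in $\ca{N}_X$; so the normal-space direction $Y := X(X\tp X - I_p)$ is the natural probe, since it will magnify the penalty contribution while the subgradient contribution coming from $\partial f(\A(X))$ should be nearly orthogonal to it. Setting $M := X\tp X - I_p$, Proposition~\ref{Prop_chain_rule} writes $\xi = \JA(X)[W] + \beta X M$ for some $W \in \partial f(\A(X))$, and Cauchy--Schwarz gives $\norm{\xi}\ff \cdot \norm{XM}\ff \geq \inner{\xi, XM}$; it remains to control the two right-hand terms.

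The penalty term is handled via the spectral estimate $\norm{M}_2 \leq \norm{M}\ff \leq r < 1/2$, which implies $(1-r)I_p \preceq X\tp X \preceq (1+r) I_p$, and hence
\[
(1-r)\norm{M}\fs \;\leq\; \norm{XM}\fs \;\leq\; (1+r)\norm{M}\fs.
\]
The cross term is where the structure of $\A$ becomes crucial: Lemma~\ref{Le_JA_JC} rewrites $\inner{\JA(X)[W], XM} = \tfrac{15}{8}\inner{\Phi(X\tp W), M^3}$, which is \emph{cubic} rather than linear in $M$. Since $X\in \Omega_r \subseteq \Omega_1$, Proposition~\ref{Prop_ATA} places $\A(X)$ in $\Omega_1$, so $\norm{W}\ff \leq M_1$; combined with $\norm{X}_2 \leq \sqrt{1+r}$, the symmetric-matrix estimate $\norm{M^3}\ff \leq \norm{M}\ff^3$, and Cauchy--Schwarz, this yields
\[
\bigl|\inner{\JA(X)[W], XM}\bigr| \;\leq\; \tfrac{15}{8}\sqrt{1+r}\, M_1\, \norm{M}\ff^3.
\]

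Assembling these estimates, using $\norm{M}\ff \leq r$ to absorb one factor of $\norm{M}\ff$, and dividing by $\norm{XM}\ff \leq \sqrt{1+r}\norm{M}\ff$, I arrive at an inequality of the shape
\[
\norm{\xi}\ff \;\geq\; \left( \frac{\beta(1-r)}{\sqrt{1+r}} - \tfrac{15}{8}\, M_1\, r \right) \norm{M}\ff.
\]
The principal obstacle is then the numerical verification that the prefactor on the right-hand side is at least $\beta/4$ under the hypothesis $r < \beta/(2\beta + 8M_1)$. I plan to close this via the elementary bound $(1-r)/\sqrt{1+r} \geq 1 - 3r/2$ on $[0, 1/2]$ (provable by squaring), which reduces the task to a linear inequality in $r$; a short check confirms that $\beta/(2\beta + 8M_1) \leq 6\beta/(12\beta + 15 M_1)$, so the assumed upper bound on $r$ implies the desired prefactor bound.

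The second assertion is then immediate: if $X^\ast \in \Omega_r$ satisfies $0 \in \partial h(X^\ast)$, then the just-proved inequality forces $\norm{X^{\ast\top} X^\ast - I_p}\ff = 0$, i.e., $X^\ast \in \ca{S}_{n,p}$. The remark following Proposition~\ref{Prop_chain_rule} identifies $\partialProj f(X^\ast) = \partial h(X^\ast)$, placing $0 \in \partialProj f(X^\ast)$, so $X^\ast$ is a first-order stationary point of \ref{Prob_Ori} by Definition~\ref{Defin_FOSP}.
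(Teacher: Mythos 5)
Your proposal is correct and follows essentially the same route as the paper's proof: it probes an arbitrary element of $\partial h(X)$ against the normal direction $X(X\tp X - I_p)$, invokes Proposition~\ref{Prop_chain_rule} and Lemma~\ref{Le_JA_JC} to reduce the cross term to a cubic in $\norm{X\tp X - I_p}\ff$, and bounds $\norm{W}\ff \leq M_1$ via Proposition~\ref{Prop_ATA}. The only differences are in constant bookkeeping (you keep the sharper factors $\sqrt{1+r}$ and $1-r$ where the paper uses $2$ and $\sigma_{\min}(X\tp X)$), and both verifications close under the stated bound on $r$.
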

	\begin{proof}
		Suppose $X$ is not feasible, then $\norm{ {X}\tp X - I_p }\ff > 0$.  
		Since $X\in \Omega_r$,  it holds that  $\norm{ {X}\tp X - I_p }\ff \leq r < 1$ and 
		\begin{equation*}
			\sigma_{\min}({X}\tp X) \geq 1 - \sigma_{\max}({X}\tp X - I_p) \geq 1 - \norm{{X}\tp X - I_p}\ff. 
		\end{equation*}
		Then for any $D \in \partial h(X)$, by Proposition \ref{Prop_chain_rule}
		 there exists $W \in \partial f(\A(X))$ such that $D = J_{\A}(X)[W] + \beta  X({X}\tp X - I_p)$. Moreover, by Proposition \ref{Prop_ATA}, $\A(X)\in \Omega_{r^3} \subset \Omega_1$, and
		 hence $\norm{W}\ff \leq M_1$. In addition, since $\norm{X}_2 \leq 2$, we get
		 $\norm{\Phi(X\tp W)}\ff \leq \norm{X}_2 \norm{W}\ff \leq 2 M_1.$ 
		Now we can conclude that
		\begin{equation*}
			\begin{aligned}
				&2\norm{D}\ff \norm{X\tp X - I_p}\ff \geq \norm{D}\ff \norm{X}_2\norm{X\tp X - I_p}\ff  
				\geq \inner{D, {X}({X}\tp {X} - I_p)}\\
				\geq {}&   \beta  \inner{{X}({X}\tp {X} - I_p),{X}({X}\tp {X} - I_p) } - \left| \inner{ J_{\A}(X)[W],  {X}({X}\tp {X} - I_p)} \right| \\
				\geq{}& \beta \sigma_{\min}({X}\tp X ) \norm{{X}\tp X - I_p}\fs 
				- \frac{15}{8} \left|\inner{\Phi({X}\tp W), (X\tp X - I_p)^3}\right| \\
%				\geq{}& \beta \sigma_{\min}({X}\tp X ) \norm{{X}\tp X - I_p}\fs - \frac{15}{8} \norm{\Phi({X}\tp W)}_2 \norm{(X\tp X - I_p)^3}\ff\\
				\geq{}& \beta \sigma_{\min}({X}\tp X ) \norm{{X}\tp X - I_p}\fs - \frac{15}{8} \norm{\Phi({X}\tp W)}_2 \norm{X\tp X - I_p}\ff^3\\
				\geq{}& \beta\norm{{X}\tp X - I_p}\fs - \left(\frac{15 M_1}{4} + \beta\right)\norm{{X}\tp X - I_p}\ff^{3}\\
				\geq {}&\beta\norm{{X}\tp X - I_p}\fs \left( 1 - \frac{\beta + 4M_1}{\beta} \norm{{X}\tp X - I_p}\ff \right) 
				\geq \frac{\beta}{2} \norm{{X}\tp X - I_p}\fs.
			\end{aligned}
		\end{equation*}
		Therefore, it holds that $\norm{D}\ff \geq \frac{\beta}{4} \norm{{X}\tp X - I_p}\ff$. 
		By the arbitrariness of $D \in \partial h(X)$, we achieve that $\mathrm{dist}(0, \partial h(X)) \geq \frac{\beta}{4} \norm{X\tp X - I_p}\ff$. 
		Furthermore, when $X^* \in \Omega_{r}$ is a first-order stationary point of \ref{NEPen}, then 
		\begin{equation*}
			0 = \mathrm{dist}(0, \partial h(X^*)) \geq \frac{\beta}{4} \norm{{X^*}\tp {X^*} - I_p}\ff \geq 0,
		\end{equation*}
		which implies that  ${X^*}\tp X^* = I_p$ and this completes the proof. 
	\end{proof}

	\subsection{Estimating stationarity}
	Directly applying unconstrained optimization approaches to solve \ref{NEPen} usually yields an infeasible sequence and terminates at an infeasible point. However, sometimes we expect mild accuracy in the substationarity, but pursue high accuracy in the feasibility in minimizing \ref{NEPen}. To this end, we impose an orthonormalization post-process after obtaining the solution $X$  by applying unconstrained approaches to solve \ref{NEPen}. Namely, 
	\begin{eqnarray}\label{orth}
		\Xo := \ProjS(X),
	\end{eqnarray}
	where $\ProjS: \bb{R}^{n\times p} \to \ca{S}_{n,p}$ is the projection on Stiefel manifold defined in Section 1.4. We first estimate the distance between $X$ and $\ProjS(X)$ in the following lemma. 
	\begin{lem} \label{lem-proj}
		For any given $X \in \bb{R}^{n\times p}$, it holds that
		\begin{equation*}
			\norm{X - \ProjS(X)}\ff \leq \norm{X\tp X - I_p}\ff. 
		\end{equation*}
	\end{lem}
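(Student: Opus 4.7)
The plan is to reduce the inequality to a one-dimensional statement about singular values via the economical SVD. Write $X = U\Sigma V\tp$ with $U \in \ca{S}_{n,p}$, $V \in \ca{S}_{p,p}$, and $\Sigma = \Diag(\sigma_1,\ldots,\sigma_p)$ where $\sigma_i \geq 0$. From the definition $\ProjS(X) = UV\tp$, I would first compute
\begin{equation*}
X - \ProjS(X) = U(\Sigma - I_p)V\tp, \qquad X\tp X - I_p = V(\Sigma^2 - I_p)V\tp.
\end{equation*}
Since $U$ and $V$ have orthonormal columns, the Frobenius norm is unitarily invariant on both factors, so the norms depend only on the singular values.

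Next, I would evaluate both sides: $\norm{X - \ProjS(X)}\fs = \sum_{i=1}^{p} (\sigma_i - 1)^2$ and $\norm{X\tp X - I_p}\fs = \sum_{i=1}^{p} (\sigma_i^2 - 1)^2 = \sum_{i=1}^{p} (\sigma_i - 1)^2 (\sigma_i + 1)^2$. Because $\sigma_i \geq 0$, the factor $(\sigma_i + 1)^2 \geq 1$, so the inequality holds termwise, and summing then taking square roots yields the conclusion.

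There is no real obstacle here; the only mild subtlety is ensuring the correct form of the SVD is used so that $\ProjS(X) = UV\tp$ is genuinely the orthogonal projection onto $\ca{S}_{n,p}$ (which is already established in the Notations subsection), and that nonnegativity of the singular values is exploited to control $(\sigma_i + 1)^2$. Everything else is a direct unitarily-invariant norm computation.
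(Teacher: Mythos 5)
Your proof is correct and follows essentially the same route as the paper: reduce to singular values via the economical SVD and unitary invariance, then use that $(\sigma_i+1)^2 \geq 1$ for $\sigma_i \geq 0$ (the paper phrases this as $|x-1| \leq |x+1|\,|x-1|$ for $x \geq 0$). No gaps.
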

	\begin{proof}
		By the economical SVD of $X$, that is, $X = U \Sigma V\tp$ with
		$\Sigma = {\rm diag}(\sigma)$, we can achieve that 
		\begin{equation*}
			\norm{X - \ProjS(X)}\ff = \norm{\Sigma - I_p}\ff \leq \norm{\Sigma^2 - I_p}\ff = \norm{X\tp X - I_p}\ff. 
		\end{equation*}
		The inequality above holds simply because $|x-1| \leq |x+1||x-1|$ 
		for any $x\geq 0$. 
	\end{proof}
	
	Then the following lemma estimates the distance between $\A(X)$ and $\ProjS(X)$. 
	\begin{lem} \label{lem-A-proj}
		For any given $X \in \Omega_{1/2}$, it holds that
		\begin{equation} \label{eq-A-proj}
			\norm{\A(X) - \ProjS(X)}\ff \leq 4\norm{X\tp X - I_p}\ff^3.
		\end{equation}
		
		Moreover 
		\begin{eqnarray} \label{eq-fA-fproj}
		 | f(\A(X)) - f(\ProjS(X))| \;\leq \; M_1 \norm{\A(X) - \ProjS(X)}\ff.
		\end{eqnarray}
	\end{lem}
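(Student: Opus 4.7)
My plan is to attack the two inequalities separately, exploiting the fact that both $\A(X)$ and $\ProjS(X)$ are simultaneously diagonalizable in the SVD of $X$, so \eqref{eq-A-proj} reduces to a scalar polynomial estimate on the singular values.

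For \eqref{eq-A-proj}, I would start by writing $X = U\Sigma V\tp$ with $\Sigma = \mathrm{Diag}(\sigma)$. Since $\A(X) = U\,\psi(\Sigma)\,V\tp$ for the scalar polynomial $\psi(t) = t(15 - 10 t^2 + 3 t^4)/8$ (already used in the proof of Lemma \ref{Le_dense_surjective}) and $\ProjS(X) = U I_p V\tp$, we get $\norm{\A(X) - \ProjS(X)}\fs = \sum_i (\psi(\sigma_i)-1)^2$. The key computation is to factor the quintic $8(\psi(t)-1) = 3t^5 - 10 t^3 + 15 t - 8$; a short check (the values and first two derivatives vanish at $t=1$) produces the clean factorization
\begin{equation*}
    \psi(t) - 1 \;=\; \tfrac{1}{8}(t-1)^3\bigl(3 t^2 + 9 t + 8\bigr).
\end{equation*}
This cubic vanishing at $t=1$ is exactly what is needed to convert $|\sigma_i - 1|^3$ into $|\sigma_i^2 - 1|^3$ after multiplying by $(\sigma_i+1)^{-3}$.

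The hypothesis $X \in \Omega_{1/2}$ pins down each $\sigma_i^2 \in [1/2, 3/2]$, hence $\sigma_i \in [1/\sqrt{2}, \sqrt{3/2}]$. Therefore $\sigma_i + 1 \geq 1$, giving $|\sigma_i - 1| \leq |\sigma_i^2 - 1|$, and the polynomial $3t^2 + 9 t + 8$ is monotonically bounded on this range by a constant $C \approx 23.5$, so $|\psi(\sigma_i)-1| \leq (C/8)\,|\sigma_i^2 - 1|^3$ with $C/8 < 4$. To pass from pointwise bounds on singular values to the Frobenius norm, I would invoke the standard inequality $\sum_i a_i^3 \leq \bigl(\sum_i a_i\bigr)^3$ for nonnegative reals with $a_i = (\sigma_i^2-1)^2$, which yields $\sum_i (\sigma_i^2-1)^6 \leq \norm{X\tp X - I_p}\ff^6$. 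Squaring the pointwise estimate, summing, and taking square roots gives the claimed bound with constant $C/8 < 4$. The main obstacle is simply being careful with the constant chase so that the prefactor indeed stays below $4$.

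For \eqref{eq-fA-fproj}, my plan is to apply Lebourg's mean value theorem for locally Lipschitz functions along the line segment from $\A(X)$ to $\ProjS(X)$. I first need to verify that both endpoints lie in $\Omega_1$: Proposition \ref{Prop_ATA} gives $\A(X) \in \Omega_{r^3} \subseteq \Omega_{1/8}$ when $X \in \Omega_{1/2}$, and $\ProjS(X) \in \ca{S}_{n,p} \subseteq \Omega_0$. Crucially, every convex combination $Y_t := (1-t)\A(X) + t\ProjS(X) = U\bigl((1-t)\psi(\Sigma)+tI_p\bigr)V\tp$ is again simultaneously diagonalized, and a direct bookkeeping on the diagonal entries (already well controlled by \eqref{eq-A-proj}) shows $Y_t \in \Omega_1$ for all $t \in [0,1]$. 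Lebourg's theorem then produces some $Y_{t_\star}$ and $V \in \partial f(Y_{t_\star})$ with $f(\A(X)) - f(\ProjS(X)) = \inner{V, \A(X) - \ProjS(X)}$; the bound $\norm{V}\ff \leq M_1$ follows from the definition of $M_1$, and Cauchy--Schwarz closes the argument. The only subtlety is keeping the whole segment inside $\Omega_1$, but this is automatic from the diagonal structure.
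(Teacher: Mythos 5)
Your proposal is correct and follows essentially the same route as the paper: the SVD reduction to the scalar polynomial $\psi(t)-1=\tfrac{1}{8}(t-1)^3(3t^2+9t+8)$, a constant chase keeping the prefactor below $4$ (the paper bounds the quadratic over $[0,3/2]$ to get $28.25/8\leq 4$, you bound it over the tighter interval $[1/\sqrt{2},\sqrt{3/2}]$), and Lebourg's mean value theorem on the segment between $\A(X)$ and $\ProjS(X)$, which stays in $\Omega_1$. The only cosmetic difference is that you pass to Frobenius norms via the pointwise bound and the power-sum inequality rather than via $\norm{(\Sigma-I_p)^3}\ff\leq\norm{\Sigma-I_p}\ff^3$ combined with Lemma \ref{lem-proj}; both are valid.
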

	\begin{proof}
		By the economical SVD of $X$, that is, $X = U \Sigma V\tp$, we obtain that
		$$
		  \A(X) - \ProjS(X) = \frac{1}{8} U\Big( 3 \Sigma^5 - 10 \Sigma^3 + 15 \Sigma - 8 I_p
		  \Big) V^T.
		$$
		Note that $X\in \Omega_{1/2}$ implies that $0\prec \Sigma \preceq \frac{3}{2} I_p$.
		Next, we have that
		\begin{equation*}
			\begin{aligned}
				&\norm{\A(X) - \ProjS(X)}\ff = \frac{1}{8}\norm{3\Sigma^5 - 10 \Sigma^3 
				+ 15\Sigma - 8 I_p }\ff \\
				={}& \frac{1}{8}\norm{\left( \Sigma - I_p \right)^3 \left( 3 \Sigma^2 + 9\Sigma + 8I_p   \right)}\ff 
				\leq  \frac{1}{8}
				\norm{ 3 \Sigma^2 + 9\Sigma + 8I_p   }_2 \norm{\left( \Sigma - I_p \right)^3}\ff
				 \\
				\leq {}&  \left(\frac{1}{8} \max_{0\leq x \leq 3/2} \{
				 |3 x^2 + 9 x + 8|\}\right) \norm{\Sigma - I_p}\ff^3
				\leq 4\norm{\Sigma^2 - I_p}\ff^3 = 4\norm{X\tp X - I_p}\ff^3. 
			\end{aligned}
		\end{equation*}
		Note that we used Lemma \ref{lem-proj} in the last inequality.

		Next we prove \eqref{eq-fA-fproj}. 
		Since $f$ is locally Lipschitz continuous, by \cite[Theorem 2.3.7]{clarke1990optimization}, 
		there exists $t\in (0,1)$ such that 
		$f(\A(X)) - f(\ProjS(X)) \in \{ \langle W, \, \A(X) - \ProjS(X)\rangle : W \in \partial f(Z_t)\}$, where $Z_t =   t \A(X) + (1-t) \ProjS(X)$. Thus
		$$
		 |f(\A(X)) - f(\ProjS(X))| \leq  \norm{\A(X) - \ProjS(X)}\ff \sup_{W\in \partial f(Z_t)} \norm{W}\ff
		$$ 
		By some tedious calculations and using the result in \eqref{eq-A-proj}, we can show that
		$Z_t \in \Omega_1$, and hence $\sup_{W\in \partial f(Z_t)} \norm{W}\ff \leq M_1$.
		This completes the proof.
		
	\end{proof}

	The following proposition illustrates that the post-processing procedure \eqref{orth}  can further reduce the function value simultaneously if the current point is close to the Stiefel manifold. 
	
	\begin{prop}
		\label{Le_ortho_fval}
		
		For any given 
		$X \in \Omega_{1/2}$,  it holds that
		\begin{equation*}
			h(\ProjS(X)) \leq h(X) - \left( \frac{\beta}{4} -  4M_1\norm{X\tp X - I_p}\ff \right)\norm{X\tp X - I_p}\fs.
		\end{equation*}
	\end{prop}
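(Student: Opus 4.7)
The plan is to directly expand $h(\ProjS(X))$ and $h(X)$ using the definition of \ref{NEPen}, and then reduce the inequality to a bound on $f(\ProjS(X)) - f(\A(X))$ that is already available from Lemma \ref{lem-A-proj}.

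First I would observe that $\ProjS(X) \in \ca{S}_{n,p}$, so the penalty term $\frac{\beta}{4}\norm{\ProjS(X)\tp \ProjS(X) - I_p}\fs$ vanishes, and moreover $\A(\ProjS(X)) = \ProjS(X)$ by the first item of Condition \ref{cond:1}. Consequently $h(\ProjS(X)) = f(\ProjS(X))$. On the other hand, by definition $h(X) = f(\A(X)) + \frac{\beta}{4}\norm{X\tp X - I_p}\fs$. Subtracting, the target inequality is equivalent to
\begin{equation*}
  f(\ProjS(X)) - f(\A(X)) \;\leq\; 4 M_1 \norm{X\tp X - I_p}\ff^{3},
\end{equation*}
after the $\frac{\beta}{4}\norm{X\tp X - I_p}\fs$ contributions on both sides cancel.

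Next, since $X \in \Omega_{1/2}$, Lemma \ref{lem-A-proj} directly yields
\begin{equation*}
  |f(\A(X)) - f(\ProjS(X))| \;\leq\; M_1 \norm{\A(X) - \ProjS(X)}\ff \;\leq\; 4 M_1 \norm{X\tp X - I_p}\ff^{3},
\end{equation*}
which immediately gives the reduced inequality above. Putting the pieces together completes the argument.

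There is essentially no obstacle here: the bulk of the work has already been done in Lemma \ref{lem-A-proj}, where the cubic dependence $\norm{\A(X) - \ProjS(X)}\ff \leq 4\norm{X\tp X - I_p}\ff^{3}$ and the Lipschitz transfer from $\A(X)$ to $\ProjS(X)$ were established. The only thing worth double-checking is the cancellation of the $\frac{\beta}{4}\norm{X\tp X - I_p}\fs$ term and the sign conventions when rearranging into the form displayed in the statement; that is purely algebraic bookkeeping.
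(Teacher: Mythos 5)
Your proof is correct and follows essentially the same route as the paper: both reduce the claim to the bound $|f(\A(X)) - f(\ProjS(X))| \leq M_1 \norm{\A(X) - \ProjS(X)}\ff \leq 4M_1\norm{X\tp X - I_p}\ff^3$ from Lemma \ref{lem-A-proj} after noting that $h(\ProjS(X)) = f(\ProjS(X))$. The paper writes this as a single chain of inequalities for $h(X) - h(\ProjS(X))$ rather than cancelling the penalty term first, but the content is identical.
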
	
	\begin{proof}
		\begin{equation*}
			\begin{aligned}
				&h(X) - h(\ProjS(X)) = f(\A(X)) - f(\ProjS(X)) +  \frac{\beta}{4}\norm{X\tp X - I_p}\fs \\
				\overset{(i)}{\geq}{}& -M_1 \norm{\A(X) - \ProjS(X)}\ff + \frac{\beta}{4}\norm{X\tp X - I_p}\fs  \\
				\geq{}& \left( \frac{\beta}{4} -  4M_1 \norm{X\tp X - I_p}\ff \right)\norm{X\tp X - I_p}\fs.
			\end{aligned}
		\end{equation*}
		Here $(i)$ follows from \eqref{eq-fA-fproj} in Lemma \ref{lem-A-proj}.
	\end{proof}

	\begin{theo}
		\label{The_Equivalence_local_minimizer}
		
		For any given $\beta > 0$, $r \in \left(0,  \frac{\beta}{2\beta + 16M_1} \right)$, and any $X^* \in \Omega_{r}$, suppose  $X^*$ is a local minimizer of \ref{NEPen}, then  $X^*$ is a local minimizer of \ref{Prob_Ori}. Moreover,  any local minimizer $X^*$ of \ref{Prob_Ori} is a local minimizer of \ref{NEPen}. 
	\end{theo}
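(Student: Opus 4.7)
The plan is to exploit the fact that $h$ coincides with $f$ on $\ca{S}_{n,p}$ together with the projection mapping $\ProjS$, which by Proposition \ref{Le_ortho_fval} does not increase the value of $h$ whenever the feasibility violation is small enough. Both implications then reduce to comparing $h$-values with $f$-values on $\ca{S}_{n,p}$ within a common neighborhood of $X^*$.

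For the forward direction, suppose $X^* \in \Omega_r$ is a local minimizer of \ref{NEPen}. Then $0 \in \partial h(X^*)$ by the first-order optimality condition for unconstrained nonsmooth minimization. Since the given range $\left(0, \beta/(2\beta+16M_1)\right)$ is contained in $\left(0, \beta/(2\beta+8M_1)\right)$, Theorem \ref{The_Equivalence_local} applies and yields ${X^*}\tp X^* = I_p$, so $X^* \in \ca{S}_{n,p}$ and $h(X^*) = f(X^*)$. Pick $\varepsilon > 0$ with $h(X) \geq h(X^*)$ for every $X \in \ca{B}(X^*, \varepsilon)$. Then for every $X \in \ca{B}(X^*, \varepsilon) \cap \ca{S}_{n,p}$ we have $f(X) = h(X) \geq h(X^*) = f(X^*)$, which is precisely the local minimality of $X^*$ for \ref{Prob_Ori}.

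For the reverse direction, let $X^*$ be a local minimizer of \ref{Prob_Ori}, so $X^* \in \ca{S}_{n,p}$ and there is some $\varepsilon_0 > 0$ with $f(Y) \geq f(X^*)$ for every $Y \in \ca{B}(X^*, \varepsilon_0) \cap \ca{S}_{n,p}$. Since $X \mapsto \norm{X\tp X - I_p}\ff$ vanishes at $X^*$ and is continuous, I would shrink $\varepsilon_0$ so that on $\ca{B}(X^*, \varepsilon_0)$: (i) $\norm{X\tp X - I_p}\ff \leq \min\{1/2,\; \beta/(16 M_1)\}$, which makes the coefficient $\frac{\beta}{4} - 4M_1 \norm{X\tp X - I_p}\ff$ appearing in Proposition \ref{Le_ortho_fval} nonnegative; and (ii) $\norm{X - \ProjS(X)}\ff \leq \varepsilon_0/2$, which by Lemma \ref{lem-proj} is automatic once the feasibility violation is at most $\varepsilon_0/2$. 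Then for any $X \in \ca{B}(X^*, \varepsilon_0/2)$, the projected point $\ProjS(X)$ lies in $\ca{B}(X^*, \varepsilon_0) \cap \ca{S}_{n,p}$, and hence
\[
 h(X) \;\geq\; h(\ProjS(X)) \;=\; f(\ProjS(X)) \;\geq\; f(X^*) \;=\; h(X^*),
\]
which shows that $X^*$ is a local minimizer of \ref{NEPen}.

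The main difficulty is purely bookkeeping: on the forward side, checking that the $r$-threshold in the hypothesis is tight enough to invoke Theorem \ref{The_Equivalence_local}; on the reverse side, aligning the neighborhood of $X^*$ so that $\ProjS$ carries $\ca{B}(X^*, \varepsilon_0/2)$ into the region where both the $f$-local-minimality hypothesis applies and the coefficient in Proposition \ref{Le_ortho_fval} is nonnegative. Once these thresholds are lined up, the two sandwich chains above deliver the equivalence without any further estimation.
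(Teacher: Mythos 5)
Your proof is correct and follows essentially the same route as the paper: the forward direction combines Theorem \ref{The_Equivalence_local} (to force ${X^*}\tp X^*=I_p$) with the identity $h=f$ on $\ca{S}_{n,p}$, and the reverse direction projects onto $\ca{S}_{n,p}$ and invokes Proposition \ref{Le_ortho_fval} and Lemma \ref{lem-proj}, exactly as in the paper. One small bookkeeping slip in the reverse direction: near $X^*\in\ca{S}_{n,p}$ the feasibility violation only satisfies $\norm{X\tp X-I_p}\ff\le(\norm{X}_2+1)\norm{X-X^*}\ff\approx 2\norm{X-X^*}\ff$, and this bound is essentially tight (e.g.\ $X=(1+t)X^*$), so on $\ca{B}(X^*,\varepsilon_0/2)$ the violation can be of order $\varepsilon_0$ rather than $\varepsilon_0/2$; your condition (ii) therefore cannot be arranged by shrinking $\varepsilon_0$ alone, and $\ProjS(X)$ could a priori land at distance up to $3\varepsilon_0/2$ from $X^*$. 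This is trivially repaired by working on a smaller ball (radius $\varepsilon_0/6$ suffices), or, as the paper does, by taking the neighborhood to be the intersection $\{\norm{Y-X^*}\ff\le\gamma/2\}\cap\{\norm{Y\tp Y-I_p}\ff\le\gamma/2\}$ so that the triangle inequality directly gives $\norm{\ProjS(Y)-X^*}\ff\le\gamma$.
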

	\begin{proof}
		For any given $\beta > 0$, $r \in \left(0,  \frac{\beta}{2\beta + 16M_1} \right)$, and any $X^* \in \Omega_{r}$, suppose $X^* \in \Omega_{r}$ is a local minimizer of \ref{NEPen}. Then $0 \in \partial h(X^*)$ and Theorem \ref{The_Equivalence_local} further implies that $X^* \in \ca{S}_{n,p}$. Notice that $h(X) = f(X)$ holds for any $X \in \ca{S}_{n,p}$, we immediately obtain that $X^*$ is a local minimizer of \ref{Prob_Ori}. 
		
		On the other hand, when $X^* \in \ca{S}_{n,p}$ is a local minimizer of \ref{Prob_Ori},  then there exists a constant $\gamma \in \left(0, \min\{\frac{1}{2},\frac{\beta}{2\beta + 16M_1} \} \right) $ such that $f(Z) \geq f(X^*)$ holds for any $Z \in \ca{S}_{n,p}$ satisfying $\norm{Z - X^*}\ff \leq \gamma$. 
		As $X^*$ is feasible, the set 
		\begin{equation*}
			\left\{Y \in \bb{R}^{n\times p}: \norm{Y-X^*}\ff \leq \frac{\gamma}{2}\right\} \cap \left\{Y \in \bb{R}^{n\times p}: \norm{Y\tp Y - I_p}\ff \leq \frac{\gamma}{2}\right\}
		\end{equation*}
		is a neighborhood of $X^*$. 
		Then for any $Y \in \bb{R}^{n\times p}$ such that  $\norm{Y - X^*}\ff \leq \frac{\gamma}{2}$ and $\norm{Y\tp Y - I_p}\ff \leq \frac{\gamma}{2}$, we first have that 
		\begin{equation*}
			\norm{\ProjS(Y) - X^*}\ff \leq \norm{\ProjS(Y) - Y}\ff + \norm{Y-X^*}\ff \leq \frac{\gamma}{2} + \frac{\gamma}{2} =  \gamma.
		\end{equation*}
		Then from Proposition \ref{Le_ortho_fval} we can conclude that 
		\begin{equation*}
			\begin{aligned}
				h(Y) - h(X^*) ={}& h(Y) - h(\ProjS(Y)) + h(\ProjS(Y)) - h(X^*) \\
				\geq{}& \left( \frac{\beta}{4} -  4M_1 \norm{Y\tp Y - I_p}\ff \right) \norm{Y\tp Y - I_p}\fs \geq 0,
			\end{aligned}
		\end{equation*}
		and complete the proof. 
	\end{proof}

	\begin{rmk}
		For the following optimization problem that minimizes a nonsmooth objective function over product of multiple Stiefel manifolds and Euclidean spaces,
			\begin{equation}
				\label{Eq_Prob_Multi_Ori}
				\begin{aligned}
					\min_{Y_i \in \bb{R}^{n_i\times p_i}, Z \in \bb{R}^l} \quad& f(Y_1,...,Y_N, Z)\\
					\text{s. t.} \quad & Y_i\tp Y_i = I_{p_i}, ~\text{for~} i = 1,...,N,
				\end{aligned}
			\end{equation}
			the corresponding constraint dissolving function can be formulated as
			\begin{equation}
				\label{Eq_Prob_Multi_Pen}
				f(\A_1(Y_1),...,\A_N(Y_N), Z) + \frac{\beta}{4} \left( \sum_{i = 1}^N \norm{Y_i\tp Y_i - I_{p_i}}\fs \right),
			\end{equation}
			where $\A_i(Y_i) := \frac{1}{8} Y_i \left( 15 I_{p_i} - 10 Y_i\tp  Y_i + 3(Y_i\tp Y_i)^2 \right)$. 
			The relationship between \eqref{Eq_Prob_Multi_Ori} and \eqref{Eq_Prob_Multi_Pen}  can be established in the same way as in the proofs in Section 3.

		Moreover, the exactness of \ref{NEPen} can be further extended to optimization problems over the generalized Stiefel manifold,
		\begin{equation}
			\label{Prob_Gen}
			\begin{aligned}
				\min_{X \in \bb{R}^{n\times p}} \quad& f(X)\\
				\text{s. t.} \quad & X\tp B X = I_p,
			\end{aligned}
		\end{equation}
		where $B \in \bb{R}^{n\times n}$ is a symmetric positive definite matrix.  For \eqref{Prob_Gen}, we can consider the following exact penalty function $\hsharp$,
		\begin{equation}
			\label{penalty_function_general}
			\hsharp(X) := f(\A^\sharp(X)) + \frac{\beta}{4} \norm{X\tp BX - I_p}\fs, 
		\end{equation} 
		where 
		\begin{equation}
			\A^\sharp(X):= \frac{1}{8}X \left( 15 I_p - 10 X\tp B X + 3 (X\tp BX)^2 \right).
		\end{equation}
		Similar to the proof of Lemma \ref{Le_dense_surjective}, it is easy to verify that $\A^{\sharp}$ is a homeomorphism from $\bb{R}^{n\times p}$ to $\bb{R}^{n\times p}$. Therefore, we can the prove the exactness of \eqref{penalty_function_general} together with the results in Section 2 and Section 3 through the same approach as the proofs in this paper.
	\end{rmk}

	\section{Subgradient Methods}

		In this section, we discuss how to develop subgradient methods for \ref{Prob_Ori} and establish its convergence properties. Subgradient method and its variants play important roles in minimizing nonsmooth functions that are not regular, in particular, not prox-friendly.  Recently, \cite{davis2020stochastic} shows the global convergence for subgradient methods in minimizing those functions that {\it admit a chain rule}, which is defined later in Definition \ref{Defin_Chain_rule}. Such properties are essential for the so-called ``descent condition'' \cite[Assumption B]{davis2020stochastic} for subgradient methods, hence are fundamental in establishing their convergence properties on solving nonconvex nonsmooth optimization, as illustrated in various existing works \cite{davis2020stochastic,bolte2021conservative,castera2021inertial}. Without assuming such property, as far as we know, there is no technique to analyze these theoretical properties of subgradient methods in such general situations. Therefore, we first introduce the concept of functions that admit a chain rule in Section \ref{Subsection_Whitney}. Moreover,  we show that functions satisfying such property can cover all the problems considered in this paper, and remarkably \ref{NEPen} admits a chain rule whenever the objective function $f$ admits a chain rule.  Furthermore, we propose a stochastic proximal subgradient method for \ref{Prob_Ori} and analyze its convergence properties based on \ref{NEPen}.

	\begin{rmk}
		By transforming \ref{Prob_Ori} into \ref{NEPen}, various existing approaches in unconstrained nonsmooth nonconvex optimization can be directly implemented to solve \ref{Prob_Ori}. These approaches include gradient sampling methods  \cite{burke2005robust,kiwiel2007Convergence,davis2021gradient}, normalized subgradient methods \cite{zhang2020complexity}, nonsmooth second-order approaches \cite{milzarek2019stochastic,asl2021analysis,yang2021stochastic}, etc. Furthermore, the convergence properties of those approaches, including the global convergence and iteration complexity, directly follow the  existing related works \cite{davis2019stochastic,zhang2020complexity,davis2021proximal,davis2021gradient,gratton2020algorithm}. 
		
		Moreover, as illustrated in Theorem \ref{The_Equivalence_local} and Theorem \ref{The_Equivalence_local_minimizer}, for any $\beta > 0$, \ref{Prob_Ori} and \ref{NEPen} have the same first-order stationary points and local minimizers in $\Omega_{r}$ with any $r \in \left(0,  \frac{\beta}{2\beta + 8M_1} \right)$. Therefore,  for any $\beta > 0$ in \ref{NEPen}, whenever these employed algorithms find a first-order stationary point $\tilde{X}$ for \ref{NEPen} which satisfies $\norm{\tilde{X}\tp \tilde{X} - I_p}\ff < \frac{\beta}{2\beta + 8M_1}$, the results presented in Theorem \ref{The_Equivalence_local} and Theorem \ref{The_Equivalence_local_minimizer} illustrate that $\tilde{X}$ is a first-order stationary point of \ref{Prob_Ori}. 
	\end{rmk}

	\subsection{Preliminaries}
	\label{Subsection_Whitney}
	
	In this subsection, we first present the concept of the functions that admit a chain rule. Then we introduce the concept of Whitney stratifiable functions, which plays an important role in nonsmooth analysis and optimization \cite{bolte2007clarke}. As illustrated in \cite{davis2020stochastic}, all the Whitney stratifiable functions and Clarke regular functions admit a chain rule. Furthermore, we provide the definition for several important classes of functions, including semi-algebraic functions, semi-analytic functions, and functions that are definable in an $o$-minimal structure, all of which are contained in the class of Whitney stratifiable functions.

%	\begin{defin}
%		A set-valued mapping $\ca{D}: \bb{R}^m \rightrightarrows \bb{R}^s$ is a mapping from $\bb{R}^m$ to a collection of subsets of $\bb{R}^s$. $\D$ is said to have closed graph if the graph of $\ca{D}$, defined by
%		\begin{equation*}
%			\mathrm{graph}(\D) := \left\{ (w,z) \in \bb{R}^m \times \bb{R}^s: w \in \bb{R}^m, z \in \D(w) \right\},
%		\end{equation*}
%		is a closed set.  
%	\end{defin}

	\begin{defin}
		An absolutely continuous curve is a continuous function $\gamma: [0,1] \to \bb{R}^{n\times p}$ whose derivative exists almost everywhere in $\bb{R}$, and $\gamma(t) - \gamma(0)$ is Lebesgue integral of $\gamma'$ between $0$ and $t$ for all $t \in \bb{R}$, i.e., 
		\begin{equation}
			\gamma(t) = \gamma(0) + \int_{0}^t \gamma'(\tau) \mathrm{d} \tau, \qquad \text{for all $t \in \bb{R}$}.
		\end{equation}
	\end{defin}

	\begin{defin}[Deinfition 5.1 in \cite{davis2020stochastic}]
		\label{Defin_Chain_rule}
		We say a locally Lipschitz continuous function $f$ admits a chain rule if for any absolutely continuous curve $\gamma: \bb{R}_+ \to \bb{R}^{n\times p}$, the equality
		\begin{equation}
			(f\circ \gamma)'(t) = \inner{\partial f(\gamma(t)), \gamma'(t)}
		\end{equation}
		holds for a.e. $t \geq 0$. 
	\end{defin}

	For various applications of \ref{Prob_Ori} arising from machine learning, the objective function $f$ can be the composition and summation of several nonsmooth functions. Therefore, the exact Clarke subdifferential of $f$ is usually extremely difficult to compute in practice in these scenarios, since its sum rule and chain rule fail for general nonsmooth functions. Therefore, the concept of the conservative field \cite{bolte2021conservative} is proposed as a generalization of the Clarke subdifferential for the functions that admit a chain rule. The conservative field is capable of keeping the chain rule and sum rule, hence offers great convenience for studying the stationarity and designing subgradient-based algorithms for unconstrained nonsmooth optimization. We present a brief introduction on conservative field below, and interested readers could refer to \cite{bolte2020mathematical,bolte2021conservative,castera2021inertial,bolte2022nonsmooth} for detailed descriptions and applications of conservative field.
	
	\begin{defin}
		\label{Defin_conservative_field}
		Let $\ca{D}$ be a set-valued mapping from $\bb{R}^{n\times p}$ to subsets of $\bb{R}^{n\times p}$. We call $\ca{D}$ as a conservative field whenever it has closed graph, nonempty compact values, and for any absolutely continuous curve $\gamma: [0,1] \to \bb{R}^{n\times p} $ satisfying $\gamma(0) = \gamma(1)$, we have
		\begin{equation}
			\label{Eq_Defin_Conservative_mappping}
			\int_{0}^1 \max_{v \in \ca{D}(\gamma(t)) } \inner{\gamma'(t), v} \mathrm{d}t = 0, 
		\end{equation}
		where the integral is understood in the Lebesgue sense. 
	\end{defin}

	\begin{defin}
		\label{Defin_conservative_field_path_int}
		Let $\ca{D}$ be a conservative field in $\bb{R}^{n\times p}$. For any given $X_0 \in \bb{R}^{n\times p}$, we can define the function
		\begin{equation}
			\label{Eq_Defin_CF}
			\begin{aligned}
				f(X) = &{} f(X_0) + \int_{0}^1 \max_{v \in \ca{D}(\gamma(t)) } \inner{\gamma'(t), v} \mathrm{d}t
				= f(X_0) + \int_{0}^1 \min_{v \in \ca{D}(\gamma(t)) } \inner{\gamma'(t), v} \mathrm{d}t
			\end{aligned}
		\end{equation} 
		for any absolutely continuous curve $\gamma$ that satisfies $\gamma(0) = X_0$ and $\gamma(1) = X$.  Then $f$ is called a potential function for $\ca{D}$, and we also say $\ca{D}$ admits $f$ as its potential function, or that $\ca{D}$ is a conservative field for $f$. 
	\end{defin}

		\begin{rmk}
			Note that the choice of the conservative field $\D_f$ in Algorithm \ref{Alg:subgradient_sto} depends on how we compute the subdifferential for $f$. When we apply the automatic differentiation algorithms that are widely used in PyTorch and TensorFlow,  the results may not lies in the Clarke subdifferential of $f$. For these cases, $\D_f$ should be chosen as the conservative field that capture all the possible outputs of an automatic differentiation algorithm, as discussed in \cite{bolte2020mathematical,bolte2021conservative}. 
		\end{rmk}

	\begin{lem}[\cite{bolte2021conservative}]
		\label{Le_path_int}
		Suppose $f$ admits a chain rule, then for any absolutely continuous curve $\gamma: [0,1] \to \bb{R}^{n\times p}$, it holds that 
		\begin{equation}
			f(\gamma(1)) - f(\gamma(0)) = \int_{0}^{1} \max_{V \in \partial f(\gamma(t))} \inner{V, \gamma'(t)} \mathrm{d} t = \int_{0}^{1} \min_{V \in \partial f(\gamma(t))} \inner{V, \gamma'(t)} \mathrm{d} t. 
		\end{equation}
	\end{lem}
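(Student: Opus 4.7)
The plan is to combine the fundamental theorem of calculus for absolutely continuous real-valued functions with the chain-rule hypothesis in Definition \ref{Defin_Chain_rule}. The argument reduces to three essentially routine steps, and the only point that requires care is the reading of the chain-rule identity itself.

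First I would observe that $f\circ\gamma:[0,1]\to\bb{R}$ is absolutely continuous. Indeed, $\gamma$ has compact image $\gamma([0,1])$, on which $f$ is globally Lipschitz with some constant $L$, so $|f(\gamma(s))-f(\gamma(t))|\leq L\norm{\gamma(s)-\gamma(t)}\ff$, and absolute continuity transfers from $\gamma$ to $f\circ\gamma$. The fundamental theorem of calculus for absolutely continuous real-valued functions then yields
\begin{equation*}
	f(\gamma(1)) - f(\gamma(0)) \;=\; \int_0^1 (f\circ\gamma)'(t)\,\mathrm{d}t.
\end{equation*}

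Second, I would invoke Definition \ref{Defin_Chain_rule}, which asserts that for a.e.\ $t\in[0,1]$ the scalar derivative $(f\circ\gamma)'(t)$ equals $\inner{V,\gamma'(t)}$ for every $V\in\partial f(\gamma(t))$; that is, the inner product $\inner{V,\gamma'(t)}$ is independent of the choice of $V\in\partial f(\gamma(t))$ at almost every $t$. Consequently, for a.e.\ $t$,
\begin{equation*}
	\max_{V\in\partial f(\gamma(t))}\inner{V,\gamma'(t)} \;=\; \min_{V\in\partial f(\gamma(t))}\inner{V,\gamma'(t)} \;=\; (f\circ\gamma)'(t),
\end{equation*}
and integrating this common value over $[0,1]$ establishes the two claimed equalities simultaneously.

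The main subtlety, and what actually does the work, is the interpretation of the symbol $\inner{\partial f(\gamma(t)),\gamma'(t)}$ in Definition \ref{Defin_Chain_rule}: it must be read as asserting that \emph{all} subgradient inner products coincide with the single scalar $(f\circ\gamma)'(t)$, not merely that $(f\circ\gamma)'(t)$ lies in the set of such inner products. Once this reading is adopted, no further structural hypotheses on $f$ (regularity, stratifiability, semi-algebraicity) are needed; the remaining ingredients are just the Lebesgue integrability of $t\mapsto(f\circ\gamma)'(t)$ and a measurable-selection argument to ensure the $\max$ and $\min$ expressions are themselves measurable. Both are standard once absolute continuity of $f\circ\gamma$ is in hand.
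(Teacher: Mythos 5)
Your argument is correct: the paper itself gives no proof of Lemma \ref{Le_path_int} (it is quoted from \cite{bolte2021conservative}), and your derivation --- absolute continuity of $f\circ\gamma$ via local Lipschitzness on the compact image of $\gamma$, the fundamental theorem of calculus, and the reading of Definition \ref{Defin_Chain_rule} as saying that $\{\inner{V,\gamma'(t)} : V\in\partial f(\gamma(t))\}$ collapses to the single value $(f\circ\gamma)'(t)$ for a.e.\ $t$ --- is exactly the standard argument behind the cited result. Your emphasis on the singleton interpretation of $\inner{\partial f(\gamma(t)),\gamma'(t)}$ is the right point to flag, and measurability of the $\max$/$\min$ integrands is indeed immediate from their a.e.\ agreement with the measurable function $(f\circ\gamma)'$.
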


	It is worth mentioning that the functions that admit chain rules are general enough for a great number of real-world applications.  As shown in \cite{davis2020stochastic}, any Clarke regular function admits a chain rule. Moreover, another import function class is the definable functions.  As illustrated in \cite{van1996geometric}, any function that is definable in an $o$-minimal structure admits a Whitney $\ca{C}^p$- stratification for any $p \geq 1$, and hence admits a chain rule \cite{davis2020stochastic}. It straightforwardly holds from Tarski–Seidenberg theorem \cite{bierstone1988semianalytic} that any semi-algebraic function is definable. 
	Moreover, beyond semialgebraicity, \cite{wilkie1996model} shows that there is an o-minimal structure that simultaneously contains both the graph of the exponential function and all semi-algebraic sets. As a result, various common activation functions and loss functions, such as sigmoid, softplus, RELU, $\ell_1$-loss, MSE loss, hinge loss, logistic loss and cross-entropy loss, are all definable. Additionally, as shown in \cite{davis2020stochastic,bolte2021conservative}, any composition of two definable functions is definable, which further illustrates that the loss function of a neural network built from definable activation functions is definable and thus admits a chain rule.

	Therefore, the objective functions for the applications mentioned in \cite{absil2017collection,chen2018proximal,lerman2018overview,tsakiris2018dual,chen2019manifold,xiao2020l21,xiao2021penalty} are definable. As a result, we can develop subgradient methods to minimize those functions and prove their convergence properties based on the framework presented in \cite{davis2020stochastic}. To develop subgradient methods for \ref{Prob_Ori}, we make the following additional assumptions on $f$ in the rest of this paper. 
	\begin{assumpt}
		\label{Assumption_f}
		For the objective function $f$ in \ref{Prob_Ori}, we assume,
		\begin{enumerate}
			\item The objective function $f$ in \ref{Prob_Ori} is path differentiable;
			\item The set $\{ f(X) : 0 \in \partialProj f(X),~ X \in \ca{S}_{n,p} \}$ has empty interior in $\bb{R}$. 
		\end{enumerate}
		
	\end{assumpt}
	It is worth mentioning that the mapping $\A$ is semi-algebraic, then it follows from \cite{bolte2021conservative} that \ref{NEPen} admits a chain rule whenever the objective function $f$ in \ref{Prob_Ori} admits a chain rule. Moreover, Assumption \ref{Assumption_f}(2) asserts a very weak version of the nonsmooth Sard's theorem \cite{bolte2007lojasiewicz} and plays an important role in establishing the convergence properties of the stochastic subgradient methods, as illustrated in \cite{davis2020stochastic,bolte2021conservative}. The following lemma states that Assumption \ref{Assumption_Subgradient_sto}(2) holds whenever $f$ is definable. Therefore, we can conclude that Assumption \ref{Assumption_f} is mild in practice. 
	\begin{lem}
		Suppose $f$ is definable, then the set $\{ f(X) : 0 \in \partialProj f(X),~ X \in \ca{S}_{n,p} \}$ is finite. 
	\end{lem}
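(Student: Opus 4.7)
The plan rests on three classical facts about o-minimal structures: (i) first-order definable operations (projection, intersection, image under a definable map) preserve definability; (ii) for a definable locally Lipschitz function, the Clarke subdifferential is a definable set-valued mapping (see, e.g., \cite{bolte2007clarke,bolte2007lojasiewicz}); and (iii) every definable subset of $\mathbb{R}$ is a finite union of points and open intervals, so any such set with empty interior is necessarily finite.

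First I would verify that the constrained critical set $C := \{X \in \ca{S}_{n,p} : 0 \in \partialProj f(X)\}$ is definable. The Stiefel manifold is semi-algebraic, being cut out by the polynomial equations $X^\top X - I_p = 0$, and $\partial f$ is definable by (ii). From Definition \ref{Defin_Projected_subdifferential}, the projected subdifferential is the image of $\partial f$ under the polynomial map $(X,W)\mapsto W - X\Phi(X^\top W)$, so the graph of $\partialProj f$ is definable. Intersecting with $\{W = 0\}$ and projecting onto the $X$-coordinate by (i) shows $C$ is definable, and hence its image $S := f(C)$ is a definable subset of $\mathbb{R}$.

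Next I would show $S$ has empty interior. Unfolding the definition, $0 \in \partialProj f(X)$ is equivalent to the existence of $W \in \partial f(X)$ with $W \in \ca{N}_X$, i.e., $0 \in \partial f(X) + \ca{N}_X$; this is precisely the Clarke-critical point condition for $f$ constrained to the smooth definable submanifold $\ca{S}_{n,p}$. The nonsmooth Morse--Sard theorem for definable functions on definable manifolds then yields that $S$ has empty interior in $\mathbb{R}$. Combined with (iii), $S$ must be finite.

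The main obstacle is invoking the definable Morse--Sard in the constrained form above, rather than the standard unconstrained one. A clean workaround is to transfer the problem to \ref{NEPen}: since $\A$ is polynomial (hence semi-algebraic) and the squared feasibility term is semi-algebraic, $h$ is definable whenever $f$ is. By Theorem \ref{The_Equivalence_local}, every $X \in C$ satisfies $0 \in \partial h(X)$, so $S$ is contained in the set of Clarke-critical values of $h$, and the unconstrained definable Morse--Sard applies directly to $h$ to deliver the empty-interior conclusion.
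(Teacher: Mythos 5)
Your proposal is correct and, in its final ``workaround'' paragraph, is essentially the paper's own proof: the paper also passes to \ref{NEPen}, notes that $h$ is definable because $\A$ and the feasibility term are semi-algebraic, invokes the finiteness of Clarke-critical values of definable functions from \cite{bolte2021conservative}, and uses the on-manifold identity $\partialProj f(X) = \partial h(X)$ (for $X \in \ca{S}_{n,p}$) to embed the set in question into the critical values of $h$. The first two paragraphs of your plan (direct definability of the constrained critical set plus a constrained Morse--Sard) are an unnecessary detour that the paper skips entirely.
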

	\begin{proof}
		Notice that both the constraints $X\tp X - I_p$ and the constraint dissolving mapping $\A$ are semi-algebraic, hence they are definable. Therefore, we can conclude that $h$ is definable, as $f$ is definable since  the linear combination and composition of definable functions are definable \cite{bolte2021conservative}. 
		
		Then it holds from \cite{bolte2021conservative} that the critical value for $h$ is finite, i.e., the set $\{ h(X) : 0 \in \partial h(X),~ X \in \bb{R}^{n\times p}\} $ is finite. Notice that Theorem \ref{The_Equivalence_local} illustrates that for any $X \in \ca{S}_{n,p}$, $0 \in \partialProj f(X)$ if and only if $0 \in \partial h(X)$, then we can conclude that 
		\begin{equation*}
			\begin{aligned}
				\{ f(X) : 0 \in \partialProj f(X),~ X \in \ca{S}_{n,p} \} 
				={}& \{ h(X) : 0 \in \partial h(X),~ X \in \ca{S}_{n,p} \}\\ \subseteq{}& \{ h(X) : 0 \in \partial h(X),~ X \in \bb{R}^{n\times p}\},
			\end{aligned}
		\end{equation*}
		which implies that the set $\{ f(X) : 0 \in \partialProj f(X),~ X \in \ca{S}_{n,p} \}$ is finite and the proof is completed. 
	\end{proof}

	Furthermore, when applying subgradient-based algorithms to minimize the penalty function, we usually need to choose $\Dk$ to approximate $\partial g(\Xk)$  at every iterate $\Xk$. The direction $D_k$ is directly chosen from $\partial g(\Xk)$ in deterministic subgradient methods, or with noise in stochastic settings.   
	As illustrated in the following lemma, when we choose the initial point in $\Omega_{1/6}$, the sequence generated by a class of subgradient-based methods for NCDF are uniformly restricted in $\Omega_{1/6}$.  Therefore, in establishing the convergence properties for these subgradient-based methods, the assumptions that $h$ is bounded below in $\bb{R}^{n\times p}$ could be waived.  
		\begin{lem}
		\label{Le_iter_bounded_l2}
		Suppose $Y_0$ satisfies $Y_0 \in \Omega_{1/6}$ and $\Yk$ is generated by 
		\begin{equation*}
			\Ykp = \Yk - \etak (\Dk + \beta \Yk (\Yk \tp \Yk - I_p)),
		\end{equation*}
		where $\Dk \in \bb{R}^{n\times p}$, and there exists a constant $M_2$ such that  
		$\norm{\Dk}\ff \leq M_2$
		holds for any $k \geq 0$. Then when $\beta \geq 60M_2$ and $\etak \leq \frac{1}{2\beta}$, $\Yk \in \Omega_{1/6}$ holds for any $k \geq 0$. 
	\end{lem}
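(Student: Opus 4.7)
The plan is to proceed by induction on $k$, with the base case $k=0$ being the hypothesis $Y_0 \in \Omega_{1/6}$. For the inductive step, assume $Y_k \in \Omega_{1/6}$, i.e., $\norm{S_k}\ff \le 1/6$ where I set $S_k := Y_k\tp Y_k - I_p$ and $G_k := D_k + \beta Y_k S_k$, so that $Y_{k+1} = Y_k - \etak G_k$. From $\norm{S_k}\ff \le 1/6$, I immediately get $\norm{Y_k}_2 \le \sqrt{1 + 1/6} = \sqrt{7/6}$, and $\norm{G_k}\ff \le M_2 + \beta \sqrt{7/6}\,\norm{S_k}\ff$, which under the hypotheses $\beta \ge 60 M_2$ and $\norm{S_k}\ff \le 1/6$ can be bounded by a constant times $\beta$.

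The heart of the argument is to expand $S_{k+1} = Y_{k+1}\tp Y_{k+1} - I_p$ and isolate a contractive term. Direct expansion gives
\begin{equation*}
S_{k+1} = S_k - 2\etak \Phi(Y_k\tp D_k) - 2\etak\beta\, Y_k\tp Y_k\, S_k + \etak^2 G_k\tp G_k,
\end{equation*}
and using $Y_k\tp Y_k = I_p + S_k$ this rewrites as
\begin{equation*}
S_{k+1} = (1 - 2\etak\beta) S_k - 2\etak\beta\, S_k^2 - 2\etak \Phi(Y_k\tp D_k) + \etak^2 G_k\tp G_k.
\end{equation*}
Under $\etak \le 1/(2\beta)$ the scalar $1 - 2\etak\beta$ is nonnegative, so taking Frobenius norms, using $\norm{S_k^2}\ff \le \norm{S_k}\ff^2$, $\norm{\Phi(Y_k\tp D_k)}\ff \le \norm{Y_k}_2 \norm{D_k}\ff$, and the inductive bound $\norm{S_k}\ff \le 1/6$, I obtain
\begin{equation*}
\norm{S_{k+1}}\ff \le \left(1 - \tfrac{5}{3}\etak\beta\right)\norm{S_k}\ff + 2\etak \sqrt{7/6}\, M_2 + \etak^2 \norm{G_k}\ff^2.
\end{equation*}

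Plugging in $\etak \le 1/(2\beta)$, $M_2 \le \beta/60$, and the bound on $\norm{G_k}\ff$ derived above, each of the two residual terms $2\etak\sqrt{7/6}\,M_2$ and $\etak^2\norm{G_k}\ff^2$ is of order $\etak\beta$ times a small numerical constant, and the calculation shows they together are strictly smaller than $\tfrac{5}{3}\etak\beta \cdot \tfrac{1}{6}$. This gives $\norm{S_{k+1}}\ff \le 1/6$ and closes the induction. The main obstacle is bookkeeping: one must choose the inductive radius ($1/6$ here) small enough that the quadratic term $2\etak\beta \norm{S_k}\ff^2$ only partially offsets the contraction $(1-2\etak\beta)\norm{S_k}\ff$, while still leaving room to absorb the two error terms under $\beta \ge 60 M_2$; the numerical constants $60$ and $1/6$ are precisely what make this budget balance, and verifying the inequality with these constants is where the argument must be executed carefully.
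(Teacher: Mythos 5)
Your proof is correct, and it follows the same overall strategy as the paper (induction plus a one-step contraction estimate on $\norm{Y_k\tp Y_k - I_p}\ff$), but the way you extract the contraction and close the induction is genuinely different. You write $-2\eta_k\beta\, Y_k\tp Y_k S_k = -2\eta_k\beta S_k - 2\eta_k\beta S_k^2$ and absorb the quadratic term via $\norm{S_k^2}\ff \le \norm{S_k}\ff^2 \le \tfrac{1}{6}\norm{S_k}\ff$, which gives the factor $1-\tfrac{5}{3}\eta_k\beta$ by a plain triangle inequality; the paper instead keeps the $\beta$-part together with the $\eta_k^2\beta^2$ cross terms and factorizes it as $S_k\bigl[(I_p-\eta_k\beta Y_k\tp Y_k)^2 - \eta_k^2\beta^2 Y_k\tp Y_k\bigr]$, bounding the bracket in operator norm by $1-\tfrac{5}{6}\eta_k\beta$. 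Your contraction constant is stronger, and because of it you can close the induction with the single inequality $2\eta_k\sqrt{7/6}\,M_2 + \eta_k^2\norm{G_k}\fs \le \tfrac{5}{18}\eta_k\beta$ (which indeed holds with plenty of slack: the left side is at most roughly $0.056\,\eta_k\beta$ under $M_2\le\beta/60$, $\eta_k\le 1/(2\beta)$, $\norm{G_k}\ff \le M_2 + \sqrt{7/6}\,\beta/6$), whereas the paper's error terms are not proportional to $\eta_k\beta\norm{S_k}\ff$ and it therefore resorts to a two-case analysis on whether $\norm{Y_k\tp Y_k - I_p}\ff$ exceeds $1/12$. Your route buys a cleaner, case-free argument; the paper's operator factorization is tighter in how it tracks the $\eta_k^2\beta^2$ terms but yields a weaker contraction and more bookkeeping. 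One small point to make explicit when you execute the final estimate: you need $1-\tfrac{5}{3}\eta_k\beta \ge 0$ to multiply it by the inductive bound $\norm{S_k}\ff\le 1/6$, which holds since $\eta_k\beta\le 1/2 < 3/5$.
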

	\begin{proof}
		Note that when $\norm{\Yk\tp \Yk - I_p}\ff \leq \frac{1}{6}$, we have that $5/6I_p \preceq \Yk\tp \Yk \preceq 7/6 I_p$, and hence $\norm{\Yk}_2 \leq 7/6.$
		By the definition of $\Yk$, let $\hat{D}_k := \Dk + \beta \Yk (\Yk \tp \Yk - I_p)$. 
		We have that 
		\begin{equation*}
			\begin{aligned}
				&\norm{\Ykp\tp \Ykp - I_p}\ff 
				= \norm{\Yk\tp \Yk - I_p - \etak \hat{D}_k\tp \Yk - \etak  \Yk\tp \hat{D}_k
					+ \etak^2 \hat{D}_k\tp\hat{D}_k  }\ff \\
				\leq{}& 
				\norm{\Yk\tp \Yk - I_p - 2\etak \beta \Yk\tp \Yk(\Yk\tp \Yk - I_p) + \etak^2 \beta^2 \Yk\tp\Yk (\Yk \tp \Yk - I_p)^2}\ff \\
				&+ \etak\norm{ -\Dk\tp \Yk - \Yk\tp \Dk + \etak\beta  (\Yk\tp \Yk - I_p) \Yk\tp\Dk
					+ \etak\beta \Dk \tp \Yk(\Yk\tp \Yk - I_p) + \etak \Dk\tp\Dk }\ff\\
				%				\leq{}&  \norm{ (\Yk\tp \Yk - I_p)\Big((I_p - \etak\beta\Yk\tp \Yk)^2 -\etak^2
				%				\beta^2\Yk\tp\Yk\Big) }\ff  \\		
				%			   &+ \etak\Big(2\norm{\Dk}\ff\norm{\Yk}_2 + 2\etak\beta\norm{\Yk\tp \Yk - I_p}\ff
				%				\norm{\Dk}\ff\norm{\Yk}_2 + \etak \norm{\Dk}^2\ff
				%				\Big)  \\
				\leq{}&		\norm{ (\Yk\tp \Yk - I_p)}\ff \norm{ (I_p - \etak\beta\Yk\tp \Yk)^2 -\etak^2
					\beta^2\Yk\tp\Yk }_2
				+ \etak\Big( 3M_2  + \etak M_2^2\Big) \\
				\leq{}& \left(1 - \frac{5}{6}\etak\beta \right) \norm{\Yk\tp \Yk - I_p}\ff + 3\etak M_2 + \etak^2 M_2^2.
			\end{aligned}
		\end{equation*}
		As a result, when $\norm{\Yk\tp \Yk -I_p}\ff \geq \frac{1}{12}$, 
		\begin{equation*}
			\norm{\Ykp\tp \Ykp - I_p}\ff - \norm{\Yk\tp \Yk - I_p}\ff \leq -\frac{5\etak\beta}{72} +  3\etak M_2 + \etak^2 M_2^2 \leq 0.
		\end{equation*}
		Moreover, when $\norm{\Yk\tp \Yk -I_p}\ff \leq \frac{1}{12}$, $\norm{\Ykp\tp \Ykp - I_p}\ff \leq \norm{\Yk\tp \Yk - I_p}\ff + 3\etak M_2 + \etak^2 M_2^2 \leq \frac{1}{6}$. 
		Then, we have  $\norm{\Ykp\tp \Ykp - I_p}\ff \leq \frac{1}{6}$. Therefore, $\norm{\Yk\tp \Yk - I_p}\ff \leq \frac{1}{6}$ holds for $k = 0,1,2,...$ by induction. 
	\end{proof}

	\subsection{Convergence properties of stochastic subgradient method}
	\label{Subsection_4_subgradient_method}

	In this subsection, we consider developing subgradient methods for solving \ref{Prob_Ori} by applying existing Euclidean subgradient methods to solve \ref{NEPen}. 
	We first present a stochastic subgradient method in Algorithm \ref{Alg:subgradient_sto}.

	\begin{algorithm}[htbp]
		\begin{algorithmic}[1]   
			\Require Function $f$ and penalty parameter $\beta$, conservative field $\D_f$ that admits $f$ as its potential function;
			\State Choose an initial guess $X_0 \in \Omega_{1/6}$, set $k=0$;
			\While{not terminated}
			\State Compute $D_k $ as an approximated evaluation for $ \JA(\Xk)[\D_f(\Xk)]$ ;
			\State $\Xkp = \Xk - \eta_k\left( D_k + \beta\Xk (\Xk\tp \Xk - I_p)\right) $;
			\State $k = k+1$;
			\EndWhile
			\State Return $X_k$.
		\end{algorithmic}  
		\caption{A framework of subgradient methods for solving \ref{Prob_Ori}.}  
		\label{Alg:subgradient_sto}
	\end{algorithm}

	\begin{cond}
		
		\label{Assumption_Subgradient_sto} 
		In Algorithm \ref{Alg:subgradient_sto}, we make the following assumptions.  
		\begin{enumerate}
			%			\item The set $\{ h(X): 0 \in \partial h(X), X \in \Omega_{1/6}\}$ has empty interior in $\bb{R}$. 
			\item For any $k \geq 0$, we have $D_k = G_k + E_k$ such that $\lim\limits_{N \to \infty} \sum\limits_{k = 0}^{N} \eta_k E_k $ exists. Moreover, for any subsequence $\{X_{k_j}\}$ that converges to $\tilde{X}$, it holds that 
			\begin{equation*}
				\lim\limits_{j\to \infty} \mathrm{dist}\left( \frac{1}{N}\sum_{j = 1}^N G_{k_j}, \JA(\tilde{X})  [ \D_f(\A(\tilde{X})) ]\right) = 0. 
			\end{equation*}
			\item There exists a constant $\tilde{M}$ such that $\sup_{k \geq 0} \norm{D_k}\ff \leq \tilde{M}$ and $\sup_{X \in \Omega,~ D \in \D_f(X)  } \norm{D} \leq \hat{M}$, and we choose $\beta \geq \max\{ 16M_1, 60\tilde{M}, 16\hat{M} \}$.
			\item  The sequence $\{\eta_k\}$ satisfies 
			\begin{equation*}
				\eta_k > 0, \quad \sum_{k = 0}^{\infty} \eta_k = \infty, \quad \sum_{k = 0}^{\infty} \eta_k^2 < \infty, \quad \sup_{k \geq 0} ~\etak \leq \frac{1}{2\beta}. 
			\end{equation*} 
			\item The set $\{f(X): X \in \ca{S}_{n,p}, 0 \in \JA(X)[\D_f(X)]\}$ has empty interior in $\bb{R}$. 
		\end{enumerate}
	\end{cond}

	In the following, we analyze the convergence properties of Algorithm 1 and show that these subgradient methods are globally convergent based on the framework proposed by \cite{davis2020stochastic}. 
	
	\begin{theo}
		\label{Theo_subgradient_convergence_base}
		Suppose Assumption \ref{Assumption_f} holds and Algorithm \ref{Alg:subgradient_sto} satisfies Condition \ref{Assumption_Subgradient_sto}. Then $\{h(\Xk)\}$ converges and every cluster point $X^*$ of $\{\Xk\}$ lies in the set $\{ X \in \ca{S}_{n,p}: 0 \in \JA(X)[\D_f(X)]  \}$. 
	\end{theo}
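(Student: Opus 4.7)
The plan is to view Algorithm \ref{Alg:subgradient_sto} as a perturbed stochastic subgradient method for the \emph{unconstrained} minimization of $h$, and then invoke the convergence framework of \cite{davis2020stochastic} essentially off the shelf. The crucial structural observation is that by Proposition \ref{Prop_chain_rule} together with the fact that $\A$ is a semialgebraic smooth map, the set-valued mapping $\D_h(X) := \JA(X)[\D_f(\A(X))] + \beta X(X^\top X - I_p)$ is a conservative field for $h$ (using the composition/sum stability of conservative fields from \cite{bolte2021conservative}). Consequently the update in Algorithm \ref{Alg:subgradient_sto} can be rewritten as $X_{k+1} = X_k - \eta_k(v_k + E_k)$ with $v_k := G_k + \beta X_k(X_k^\top X_k - I_p)$, which is precisely the format treated in \cite{davis2020stochastic} modulo the asymptotic bias captured by $E_k$.

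With this recasting in place, I would verify the three standing hypotheses of the Davis--Drusvyatskiy framework. First, uniform boundedness of $\{X_k\}$: Condition \ref{Assumption_Subgradient_sto}(2)--(3) are calibrated exactly so that Lemma \ref{Le_iter_bounded_l2} applies with $M_2 = \tilde M$, yielding $X_k \in \Omega_{1/6}$ for every $k$; this also ensures $\{G_k\}$ and the penalty gradient stay bounded. Second, the descent/chain-rule condition: path-differentiability of $h$ is inherited from that of $f$ (Assumption \ref{Assumption_f}(1)) through the semialgebraic $\A$, the step-size conditions are Condition \ref{Assumption_Subgradient_sto}(3), and the noise requirement is met because $\lim_N \sum_{k=0}^N \eta_k E_k$ exists while the remaining part $G_k$ converges subsequentially into $\JA(\tilde X)[\D_f(\A(\tilde X))]$ along any limit point, giving the appropriate outer-semicontinuous selection property. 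Third, the critical-value requirement: Assumption \ref{Assumption_f}(2) combined with Theorem \ref{The_Equivalence_local} shows that the set of critical values of $h$ restricted to $\Omega_{1/6}$ lies inside the $f$-critical values on $\ca S_{n,p}$, hence has empty interior in $\bb R$.

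With all hypotheses verified, the main theorem of \cite{davis2020stochastic} delivers both conclusions simultaneously: $\{h(X_k)\}$ is convergent, and every cluster point $X^*$ of $\{X_k\}$ satisfies $0 \in \D_h(X^*)$. To finish, I would translate this back to \ref{Prob_Ori}. Since $\beta \geq 16 M_1$ implies $\tfrac{\beta}{2\beta+8M_1} \geq \tfrac{2}{5} > \tfrac{1}{6}$, the cluster point $X^* \in \Omega_{1/6}$ falls inside the regime of Theorem \ref{The_Equivalence_local}, which forces $X^* \in \ca{S}_{n,p}$. At a feasible point the penalty term $\beta X^*({X^*}^\top X^* - I_p)$ vanishes, so $0 \in \D_h(X^*)$ collapses to $0 \in \JA(X^*)[\D_f(X^*)]$, which is precisely the required stationarity.

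The main obstacle I expect is the second step, specifically the bookkeeping needed to match Algorithm \ref{Alg:subgradient_sto} with the exact noise model of \cite{davis2020stochastic}. Condition \ref{Assumption_Subgradient_sto}(1) only guarantees that partial sums $\sum \eta_k E_k$ converge and that the averages of $G_{k_j}$ asymptotically land in $\JA(\tilde X)[\D_f(\A(\tilde X))]$; reconciling this Cesàro-type statement with the trajectory-wise differential inclusion analysis of \cite{davis2020stochastic} will likely require either a time-rescaling argument along the interpolated process or absorbing the bias $G_k - \mathrm{proj}_{\D_h(X_k)}(G_k + \beta X_k(X_k^\top X_k - I_p))$ into an enlarged effective noise term before invoking their ODE-method conclusion.
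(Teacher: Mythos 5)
Your proposal follows essentially the same route as the paper's proof: confine the iterates to $\Omega_{1/6}$ via Lemma \ref{Le_iter_bounded_l2}, verify Assumptions A and B of \cite{davis2020stochastic} for the field $\D_h(X)=\JA(X)[\D_f(\A(X))]+\beta X(X\tp X-I_p)$, invoke their Theorem 3.2 to get convergence of $\{h(X_k)\}$ and $0\in\D_h(X^*)$ at cluster points, and then use the exactness argument to force $X^*\in\ca{S}_{n,p}$ so that the penalty term vanishes. Two details are off, both traceable to conflating $\D_f$ with $\partial f$. First, for the weak Sard condition (Assumption B(1)) you argue from Assumption \ref{Assumption_f}(2) together with Theorem \ref{The_Equivalence_local}; but the critical points of $\D_h$ on the manifold are $\{X: 0\in\JA(X)[\D_f(X)]\}$, which can strictly contain $\{X: 0\in\partialProj f(X)\}$ when the conservative field is larger than the Clarke subdifferential (as happens for outputs of automatic differentiation, which is the intended use case), so the empty-interior property of the $\partial f$-critical values does not transfer. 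This is exactly what Condition \ref{Assumption_Subgradient_sto}(4) is there to supply, and it is what the paper invokes. Second, in your final step you apply Theorem \ref{The_Equivalence_local} with the constant $M_1$, which bounds $\partial f$ over $\Omega_1$; the element of $\D_h(X^*)$ you must rule out involves $W\in\D_f(\A(X^*))$, bounded only by $\hat M$, so you need the $\hat M$-analogue of that theorem --- which is why Condition \ref{Assumption_Subgradient_sto}(2) imposes $\beta\ge 16\hat M$ and why the paper writes ``similar to the proof of Theorem \ref{The_Equivalence_local}'' rather than citing it directly. Both issues are repaired by hypotheses you already assumed, so the argument goes through. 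Finally, the obstacle you anticipate about reconciling the Ces\`aro-type noise condition with the differential-inclusion analysis is not an issue: Condition \ref{Assumption_Subgradient_sto}(1) is stated verbatim in the format of Assumption A of \cite{davis2020stochastic}, so no time-rescaling or noise-absorption argument is needed.
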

	\begin{proof}
		Firstly, Lemma \ref{Le_iter_bounded_l2} illustrates that when $\sup_{k \geq 0} \norm{D_k}\ff \leq \tilde{M}$, $\beta \geq \max\{ 16M_1, 60\tilde{M} \}$, and $\etak \leq \frac{1}{2\beta}$ for any $k \geq 0$ in Algorithm \ref{Alg:subgradient_sto}, it holds that $\{\Xk\}$ is restricted in $\Omega_{1/6}$. Then every cluster point of $\{\Xk\}$ lies in $\Omega_{1/6}$.

		Now we check the validity of Assumption A and Assumption B in \cite{davis2020stochastic}. The fact that $\{\Xk\}$ is restricted in $\Omega_{1/6}$ directly shows that Assumption A(1) and A(2) hold. The Assumption A(3) and A(4) directly follow from Condition \ref{Assumption_Subgradient_sto}(2) and \ref{Assumption_Subgradient_sto}(3), respectively. In addition, Assumption A.5 follows quickly from the fact that $\ca{D}_f$ is outer-semicontinuous as mentioned in \cite[Lemma 4.1]{davis2020stochastic}.  Furthermore, Assumption B(1) in \cite{davis2020stochastic} is guaranteed by Assumption \ref{Assumption_Subgradient_sto}(4), while Assumption B(2) directly follows from Assumption \ref{Assumption_f}(1) and \cite[Lemma 5.2]{davis2020stochastic}. 
		
		Therefore, from \cite[Theorem 3.2]{davis2020stochastic}, it holds that any cluster point of  $\{\Xk\}$ lies in the set $\{ X \in \Omega_{1/6}: 0 \in \JA(X)[D_f(\A(X))] + \beta X(X\tp X - I_p) \}$ and $h(\Xk)$ converges.
		Similar to the proof in Theorem \ref{The_Equivalence_local}, we obtain that for any $X \in \Omega_{1/6}$, $0 \in \JA(X)[D_f(\A(X))] + \beta X(X\tp X - I_p)$ implies that $X \in \ca{S}_{n,p}$, whenever $\beta \geq 16\hat{M}$. Therefore, we can conclude that  any cluster point of  $\{\Xk\}$ lies in the set $\{ X \in \ca{S}_{n,p}: 0 \in \JA(X)[\D_f(X)]  \}$, and  thus we complete the proof.
	\end{proof}

	\begin{coro}
		\label{Coro_subgradient_convergence_base}
		Suppose Assumption \ref{Assumption_f} holds and Algorithm \ref{Alg:subgradient_sto} satisfies Condition \ref{Assumption_Subgradient_sto}, and $\D_f$ in Algorithm \ref{Assumption_Subgradient_sto} is chosen as $\partial f$. Then $\{h(\Xk)\}$ converges and every cluster point $X^*$ of $\{\Xk\}$ is a first-order stationary point of \ref{Prob_Ori}. 
	\end{coro}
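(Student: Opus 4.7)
The plan is to invoke Theorem \ref{Theo_subgradient_convergence_base} directly with the specialization $\D_f = \partial f$. Once this is done, the first half of the conclusion (convergence of $\{h(\Xk)\}$) carries over immediately, so the only remaining task is to translate the inclusion $0 \in \JA(X^*)[\partial f(X^*)]$ at a feasible cluster point $X^* \in \ca{S}_{n,p}$ into the definition of a first-order stationary point of \ref{Prob_Ori}, namely $0 \in \partialProj f(X^*)$.

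For this translation I would invoke Proposition \ref{Prop_diff_A}, which gives the explicit simplification $\JA(X)[D] = D - X\Phi(D\tp X)$ whenever $X \in \ca{S}_{n,p}$. Applying this formula to any $W \in \partial f(X^*)$ yields $\JA(X^*)[W] = W - X^*\Phi(W\tp X^*)$, and by Definition \ref{Defin_Projected_subdifferential} this is precisely a generic element of $\partialProj f(X^*)$. Hence
\begin{equation*}
\JA(X^*)[\partial f(X^*)] \;=\; \partialProj f(X^*),
\end{equation*}
so the inclusion $0 \in \JA(X^*)[\partial f(X^*)]$ coincides with $0 \in \partialProj f(X^*)$, which by Definition \ref{Defin_FOSP} means that $X^*$ is a first-order stationary point of \ref{Prob_Ori}.

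The only piece of bookkeeping required to justify the appeal to Theorem \ref{Theo_subgradient_convergence_base} is to verify that Condition \ref{Assumption_Subgradient_sto} is compatible with the choice $\D_f = \partial f$. Assumption \ref{Assumption_f}(1) asserts that $f$ admits a chain rule, which by the discussion in Section \ref{Subsection_Whitney} (cf.\ \cite{bolte2021conservative,davis2020stochastic}) guarantees that the Clarke subdifferential $\partial f$ is indeed a conservative field admitting $f$ as a potential function. Furthermore, under the substitution $\D_f = \partial f$, part (4) of Condition \ref{Assumption_Subgradient_sto} reduces to requiring that the set $\{f(X) : X \in \ca{S}_{n,p},\, 0 \in \partialProj f(X)\}$ has empty interior in $\bb{R}$, which is exactly Assumption \ref{Assumption_f}(2) via the identification established above. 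There is no real obstacle in this argument; the corollary is essentially a notational unpacking of Theorem \ref{Theo_subgradient_convergence_base} combined with the closed-form expression for $\JA$ on the manifold.
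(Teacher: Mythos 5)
Your proposal is correct and matches the paper's (omitted) argument: the paper states that the corollary ``directly follows from Definition \ref{Defin_FOSP} and Theorem \ref{Theo_subgradient_convergence_base},'' and your unpacking via Proposition \ref{Prop_diff_A} — identifying $\JA(X^*)[\partial f(X^*)]$ with $\partialProj f(X^*)$ on the manifold — is exactly the intended reasoning. The additional check that Condition \ref{Assumption_Subgradient_sto}(4) reduces to Assumption \ref{Assumption_f}(2) under $\D_f = \partial f$ is a sensible piece of bookkeeping that the paper leaves implicit.
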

	Corollary \ref{Coro_subgradient_convergence_base} directly follows from Definition \ref{Defin_FOSP} and Theorem \ref{Theo_subgradient_convergence_base}, hence we omit its proof for simplicity.

	\subsection{Stochastic proximal subgradient method}
	
	In this subsection, we consider a special cases of \ref{Prob_Ori}, where the objective function $f$ takes a composite form:
	\begin{equation}
		f(X) = \phi(X) + r(X).
	\end{equation}
	Here,  $\phi$ and $r$ satisfy the  following assumptions.
	\begin{assumpt}
		\label{Assumption_f_composite}
		\begin{enumerate}
			\item $\phi$ is locally Lipschitz continuous and possibly nonconvex. 
			\item $r$ is convex and  $M_r$-Lipschitz continuous over $\bb{R}^{n\times p}$.
			\item The proximal mapping of $r$, defined as 
			\begin{equation}
				\label{Eq_proximal_mapping}
				\prox_{r}(X) := \mathop{\arg\min}\limits_{Y \in \bb{R}^{n\times p}} ~ r(Y) + \frac{1}{2} \norm{Y-X}\fs,
			\end{equation}
			is easy to compute. 
		\end{enumerate}
	\end{assumpt}
	
	Next we present a stochastic proximal subgradient methods for solving \ref{Prob_Ori} in Algorithm \ref{Alg:subgradient_proximal}.
	
	\begin{algorithm}[htbp]
		\begin{algorithmic}[1]   
			\Require Function $f = \phi + r$ and the proximal mapping for $r$;
			\State Choose an initial guess $X_0 \in \Omega_{1/6}$, set $k=0$;
			\While{not terminated}
			\State Compute $D_k $ as an approximated evaluation for $ \partial \phi(\Xk)$;
			\State $\Xkp = \prox_{\eta_k r}\left( \A(\Xk) - \eta_kD_k \right) $;
			\State $k = k+1$;
			\EndWhile
			\State Return $X_k$.
		\end{algorithmic}  
		\caption{A stochastic proximal subgradient methods for solving \ref{Prob_Ori}.}  
		\label{Alg:subgradient_proximal}
	\end{algorithm}
	
	In order to establish the convergence of Algorithm \ref{Alg:subgradient_proximal} through \ref{NEPen}, we need to restrict our attention to the case that satisfies a group of conditions which is very similar to those stated in Condition \ref{Assumption_Subgradient_sto} but under the stochastic environment.
		
		%analyze the convergence properties through \ref{NEPen} based on the  conditions in Assumption \ref{Assumption_Subgradient_proximal}. As illustrated in Assumption \ref{Assumption_Subgradient_proximal}(1),  the stochasticity in evaluating the $\partial \phi$ in  Algorithm \ref{Alg:subgradient_proximal} is characterized by $\xi_k$. Moreover, Assumption \ref{Assumption_Subgradient_proximal} requires the stepsizes $\{\eta_k\}$ to be square-summable and uniformly bounded. }
	\begin{cond}
		
		\label{Assumption_Subgradient_proximal} 
		In Algorithm \ref{Alg:subgradient_proximal}, we assume 
		\begin{enumerate}
			\item $D_k = G_k + \xi_k$, such that
			\begin{enumerate}
				\item For any subsequence $\{X_{k_j}\}$ that converges to $\tilde{X}$, it holds that 
				\begin{equation*}
					\lim\limits_{j\to \infty} \mathrm{dist}\left( \frac{1}{N}\sum_{j = 1}^N G_{k_j}, \partial \phi(\tilde{X}) \right) = 0. 
				\end{equation*}
				\item $\{\xi_k\}$ is a martingale difference sequence with respect to the $\sigma$-fields $\{\ca{F}_k:= \sigma(X_{j}, \xi_j: j\leq k)\}$. That is, $\{\xi_k\}$ is a sequence of random variables, and there exists $M_{\xi}>0$ such that for any $k >0$, it holds that 
				\begin{equation}
					\bb{E}[\xi_k|\ca{F}_{k-1}] = 0, \quad \text{and}\quad  \bb{E}[\norm{\xi_k}\fs|\ca{F}_{k-1} ] \leq M_{\xi},
				\end{equation}
				almost surely. 
				\item There exists a constant $\tilde{M}$ such that $\sup_{k \geq 0} \norm{D_k}\ff \leq \tilde{M}$. 
			\end{enumerate}
			
			\item  The sequence $\{\eta_k\}$ satisfies 
			\begin{equation*}
				\eta_k > 0, \quad \sum_{k = 0}^{\infty} \eta_k = \infty, \quad \sum_{k = 0}^{\infty} \eta_k^2 < \infty, \quad \sup_{k \geq 0} ~\etak \leq \frac{1}{19(\tilde{M} + M_r)}. 
			\end{equation*} 
		\end{enumerate}
	\end{cond}

	Existing Riemannian proximal-type methods are developed under the additional assumption that $\phi$ is locally Lipschitz continuous over $\ca{S}_{n,p}$. Then in each iteration at $\Xk$, existing Riemannian proximal-type methods have to compute a minimizer of the following Riemannian proximal mapping subproblem, 
	\begin{equation}
		\label{Eq_Rie_proximal_subproblem}
		\min_{Y \in \ca{T}_{X_k}}~ \inner{\grad \phi(\Xk), Y} + r(Y+\Xk) + \frac{1}{2\eta_k} \norm{Y}\fs.
	\end{equation}
	However, the subproblem \eqref{Eq_Rie_proximal_subproblem} is usually not easy to compute, even if $\prox_{\eta_k r}$ has a closed-form computation. Therefore, as illustrated in various of existing works \cite{chen2018proximal,huang2019riemannian,xiao2021penalty}, solving \eqref{Eq_Rie_proximal_subproblem} is usually expensive for these Riemannian proximal-type methods. 
	
	On the other hand, Algorithm \ref{Alg:subgradient_proximal}  only requires to compute the $\prox_{\eta_k r}\left( \A(\Xk) - \eta_kD_k \right)$  rather than the Riemannian proximal mapping subproblem \eqref{Eq_Rie_proximal_subproblem} in each iteration. As a result, the updating formula in Algorithm \ref{Alg:subgradient_proximal} only involves  matrix-matrix multiplications, thus avoiding the bottleneck in solving the Riemannian proximal subproblem  \eqref{Eq_Rie_proximal_subproblem}.
	
	In the rest of this subsection, we prove the global convergence of Algorithm \ref{Alg:subgradient_proximal}.
	The proof for the convergence properties is based on the differential inclusion techniques in \cite{bolte2021conservative,davis2020stochastic}, where we employ the constraint dissolving function \ref{NEPen} as the merit function. 
	Let $\Yk := \A(\Xk)$, the optimality condition of \eqref{Eq_proximal_mapping} illustrates that there exists $T_k \in \partial r(\Xkp)$ such that $\Xkp = \A(\Xk) - \eta_k(D_k + T_k)$. As shown later in Proposition  \ref{Prop_prox_subgradient_AX_convergence}, the sequence $\{\Yk\}$ follows the following updating scheme, 
	\begin{equation}
		\label{Eq_proximal_subgradient_iter}
		\begin{aligned}
			\Ykp - \Yk 
			={}& -\eta_k \left(\JA(\Yk)[G_k + T_k] + \beta \Yk(\Yk\tp\Yk - I_p)\right) \\
			& -  \underbrace{\Big(- \eta_k\beta \Yk(\Yk\tp\Yk - I_p)\Big)}_{=:E_{1,k}} - \underbrace{\Big(\Ykp - \Yk - \eta_k \JA(\Yk)[D_k + T_k]\Big)}_{=:E_{2,k}}\\
			& - \underbrace{\Big(-\eta_k\JA(\Yk)[\xi_k]\Big)}_{=:E_{3,k}}\\
			={}& -\eta_k \left(W_k + E_k\right), 
		\end{aligned}
	\end{equation}
	where $W_k =\JA(\Yk)[G_k + T_k] + \beta \Yk(\Yk\tp\Yk - I_p) $ and $E_k = \frac{E_{1, k} + E_{2, k} + E_{3, k}}{\eta_k}$. 
	
	When  $W_k$ is an approximated evaluation for $\partial h(\Yk)$ and $E_k$ is regarded as an error term,  $\{\Yk\}$ can be regarded as a discretization for the trajectory $\gamma: \bb{R}_+ \to \bb{R}^{n\times p}$ that satisfies the following differential inclusion,
	\begin{equation}
		\label{Eq_proximal_subgradient_differential_inclusion}
		\dot{\gamma}(t) \in  -\partial h(\gamma(t)). 
	\end{equation}
	Then  by  using similar techniques as  in Theorem \ref{Theo_subgradient_convergence_base}, we show that $\{\Yk\}$ tracks the trajectory of the differential inclusion \eqref{Eq_proximal_subgradient_differential_inclusion} to  prove the global convergence of Algorithm \ref{Alg:subgradient_proximal}.

	\begin{lem}
		\label{Le_prox_subgradient_iter_sum_bounded}
		Suppose Assumption \ref{Assumption_f_composite} and Condition \ref{Assumption_Subgradient_proximal} hold in Algorithm \ref{Alg:subgradient_proximal}. Then almost surely,  any $\{\Xk\}$ generated by Algorithm \ref{Alg:subgradient_proximal} lies in $\Omega_{1/6}$.
	\end{lem}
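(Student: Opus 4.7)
The plan is to argue by induction on $k$ that $\|\Xk^\top \Xk - I_p\|_{\mathrm{F}} \leq 1/6$ for every iterate, with the base case supplied directly by the initialization $X_0 \in \Omega_{1/6}$. For the inductive step, I would first extract a usable one-step update rule from the proximal subproblem: by convexity of $r$ and the optimality condition defining $\prox_{\eta_k r}$, there exists $T_k \in \partial r(\Xkp)$ such that $\Xkp = \A(\Xk) - \eta_k (D_k + T_k)$, and the $M_r$-Lipschitz continuity of $r$ yields $\|T_k\|_{\mathrm{F}} \leq M_r$. Together with $\|D_k\|_{\mathrm{F}} \leq \tilde{M}$ (which is an almost-sure bound from Condition \ref{Assumption_Subgradient_proximal}(1)(c)), this gives $\|D_k + T_k\|_{\mathrm{F}} \leq \tilde{M} + M_r$, and hence the step-size choice $\etak \leq 1/(19(\tilde{M}+M_r))$ forces $\etak \|D_k+T_k\|_{\mathrm{F}} \leq 1/19$.

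Next I would exploit the cubic contraction of feasibility violation by $\A$ established in Proposition \ref{Prop_ATA}. Under the inductive hypothesis $\Xk \in \Omega_{1/6}$, we obtain
\begin{equation*}
\|\A(\Xk)^\top \A(\Xk) - I_p\|_{\mathrm{F}} \leq \|\Xk^\top \Xk - I_p\|_{\mathrm{F}}^3 \leq (1/6)^3,
\end{equation*}
and consequently $\|\A(\Xk)\|_2 \leq \sqrt{1 + (1/6)^3}$. Expanding
\begin{equation*}
\Xkp^\top \Xkp - I_p = \A(\Xk)^\top \A(\Xk) - I_p - 2\etak \Phi\bigl((D_k+T_k)^\top \A(\Xk)\bigr) + \etak^2 (D_k+T_k)^\top(D_k+T_k)
\end{equation*}
and applying the triangle inequality together with submultiplicativity of the Frobenius and spectral norms yields the numerical estimate
\begin{equation*}
\|\Xkp^\top \Xkp - I_p\|_{\mathrm{F}} \leq (1/6)^3 + 2 \cdot (1/19) \cdot \sqrt{1 + (1/6)^3} + (1/19)^2 \leq 1/6,
\end{equation*}
which closes the induction.

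The main subtle point is that the proof hinges on a tight numerical balance rather than a qualitative descent argument, in contrast to Lemma \ref{Le_iter_bounded_l2} where one can split into ``far from feasible'' and ``close to feasible'' regimes. Here the proximal step has no ``restoring'' component of its own, so the entire guarantee has to come from the fact that $\A$ cubically shrinks the feasibility residual while the proximal perturbation can only inflate it by at most $O(\etak)$. The step-size constant $1/19$ in Condition \ref{Assumption_Subgradient_proximal}(2) is precisely what makes the resulting inequality fit inside the tolerance $1/6$, so the work is in verifying that this constant is admissible; once that arithmetic is in hand, the induction closes pathwise, and the almost-sure qualifier is inherited from the almost-sure boundedness of $\|D_k\|_{\mathrm{F}}$.
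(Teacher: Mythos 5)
Your proposal is correct and follows essentially the same route as the paper: the optimality condition of the proximal step gives $\Xkp = \A(\Xk) - \eta_k(D_k+T_k)$ with $\norm{D_k+T_k}\ff \leq \tilde{M}+M_r$, the cubic contraction from Proposition \ref{Prop_ATA} controls $\norm{\A(\Xk)\tp\A(\Xk)-I_p}\ff$, and the same quadratic expansion of $\Xkp\tp\Xkp - I_p$ together with $\eta_k \leq 1/(19(\tilde{M}+M_r))$ closes the induction. The only difference is cosmetic (the paper groups the cross and quadratic terms into a single bound $3\eta_k(\tilde{M}+M_r)$, arriving at $1/216 + 3/19 < 1/6$, while your arithmetic is slightly looser), and the paper's proof additionally records the summability of $\norm{\Xk\tp\Xk - I_p}\fs$ for later use, which the lemma statement itself does not require.
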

	\begin{proof}
		For any $\{\Xk\}$ generated by  Algorithm \ref{Alg:subgradient_proximal}, step 4 in Algorithm \ref{Alg:subgradient_proximal} and the optimality condition of \eqref{Eq_proximal_mapping} implies that $ 0 \in \Xkp - \A(\Xk) + \eta_k D_k +  \eta_k\partial r(\Xkp)$. 
		Therefore, for any $k >0$, there exists $T_k \in \partial r(\Xkp)$ such that $\Xkp - \A(\Xk) = -\eta_k (D_k + T_k)$. As a result, we obtain
		\begin{equation}
			\label{Eq_Le_prox_subgradient_iter_sum_bounded_0}
			\begin{aligned}
				&\norm{\Xkp\tp \Xkp - I_p}\ff \\
				\leq{}&  \norm{\A(\Xk)\tp \A(\Xk)  - I_p - 2\eta_k \Phi((D_k + T_k)\tp\A(\Xk)) + \eta_k^2 (D_k + T_k)\tp(D_k + T_k) }\ff \\
				\leq{}& \norm{\A(\Xk)\tp \A(\Xk)  - I_p }\ff + 2\eta_k \norm{D_k+T_k}\ff \norm{\A(\Xk)}_2 + \eta_k^2 \norm{D_k+ T_k}\fs \\
				\leq{}& \norm{\Xk\tp \Xk - I_p}\ff^3 + 3\eta_k (\tilde{M} + M_r) 
				\leq \frac{1}{36}\norm{\Xk\tp \Xk - I_p}\ff + 3\eta_k (\tilde{M} + M_r). 
			\end{aligned}
		\end{equation}
		When $\norm{\Xk\tp \Xk - I_p}\ff \leq \frac{1}{6}$, it holds that 
		\begin{equation*}
			\norm{\Xkp\tp \Xkp - I_p}\ff \leq \frac{1}{216} + 3\eta_k (\tilde{M} + M_r) \leq  \frac{1}{216} + \frac{3}{19} < \frac{1}{6},
		\end{equation*}
		hence we  complete the first part of the proof by induction. 
		
		Moreover, it directly follows from \eqref{Eq_Le_prox_subgradient_iter_sum_bounded_0}  that
		\begin{equation*}
			\norm{\Xkp\tp \Xkp - I_p}\fs \leq \left(\frac{1}{36}\norm{\Xk\tp \Xk - I_p}\ff + 3\eta_k (\tilde{M} + M_r)\right)^2.
		\end{equation*}
		Together with Cauchy's inequality, we have that  
		\begin{equation}
			\label{Eq_Le_prox_subgradient_iter_sum_bounded_1}
			\norm{\Xkp\tp \Xkp - I_p}\fs - \frac{1}{648} \norm{\Xk\tp \Xk - I_p}\fs \leq  18 \eta_k^2 (\tilde{M} + M_r)^2. 
		\end{equation}
		Then by \eqref{Eq_Le_prox_subgradient_iter_sum_bounded_1}, it holds that 
		\begin{equation}
			\label{Eq_Le_prox_subgradient_iter_sum_bounded_2}
			\sum_{k = 0}^{\infty}\norm{\Xk\tp \Xk - I_p }\fs \leq 19  (\tilde{M} + M_r)^2 \sum_{k = 0}^{\infty} \eta_k^2 < \frac{1}{6}, 
		\end{equation}
		and we complete the proof. 
	\end{proof}

	\begin{prop}
		\label{Prop_prox_subgradient_AX_convergence}
		Suppose Assumption \ref{Assumption_f}, Assumption \ref{Assumption_f_composite} and Condition \ref{Assumption_Subgradient_proximal} holds in Algorithm \ref{Alg:subgradient_proximal}. Then for any $\{\Xk\}$ generated by Algorithm \ref{Alg:subgradient_proximal}, any cluster point of $\{\A(\Xk)\}$ is a first-order stationary point of \ref{Prob_Ori}, and $\{h(\A(\Xk))\}$ converges. 
	\end{prop}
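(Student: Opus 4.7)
The plan is to interpret the sequence $\{Y_k := \A(X_k)\}$ as a noisy Euler discretization of the subgradient differential inclusion $\dot{\gamma}(t) \in -\partial h(\gamma(t))$ and then invoke the ODE-tracking framework of \cite{davis2020stochastic} with $h$ as the merit function. The conclusion about \ref{Prob_Ori} will come from the equivalence between stationary points of $h$ and of \ref{Prob_Ori} established in Theorem \ref{The_Equivalence_local}. Three ingredients are needed: confinement of the trajectory to a bounded neighbourhood of $\ca{S}_{n,p}$, a descent-with-controlled-error decomposition, and a limit identification that matches the framework's hypotheses.

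First, I would invoke Lemma \ref{Le_prox_subgradient_iter_sum_bounded} to place $\{X_k\}\subset \Omega_{1/6}$ almost surely and to obtain $\sum_k \norm{X_k\tp X_k - I_p}\fs < \infty$. Proposition \ref{Prop_ATA} then upgrades the feasibility of $Y_k$ to $\norm{Y_k\tp Y_k - I_p}\ff \leq \norm{X_k\tp X_k - I_p}\ff^3$, so $\{Y_k\}\subset \Omega_{1/6}$ as well, and in particular $\sum_k \eta_k \norm{Y_k(Y_k\tp Y_k - I_p)}\ff$ is finite. Boundedness of $\{Y_k\}$ also guarantees a uniform bound on $\JA(Y_k)$, on elements of $\partial f(\A(Y_k))$ (via $M_1$), and on $\partial r$ by Assumption \ref{Assumption_f_composite}(2).

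Second, by the optimality condition of the proximal step, there exists $T_k \in \partial r(X_{k+1})$ with $X_{k+1} = Y_k - \eta_k(D_k + T_k)$, and I would justify the decomposition \eqref{Eq_proximal_subgradient_iter} by expanding $\A$ at $Y_k$ via its smoothness from Proposition \ref{Prop_diff_A}, yielding $\|E_{2,k}\|\ff = O(\eta_k^2)$ on $\Omega_{1/6}$. The term $E_{1,k}$ is handled using the cubic feasibility bound combined with $\sum_k \eta_k \norm{X_k\tp X_k - I_p}\ff^3 < \infty$, inherited from Lemma \ref{Le_prox_subgradient_iter_sum_bounded}. For the noise term $E_{3,k} = -\eta_k \JA(Y_k)[\xi_k]$, I would use that $\{\xi_k\}$ is a martingale difference sequence with uniformly bounded conditional second moments by Condition \ref{Assumption_Subgradient_proximal}(1b), together with $\{\eta_k\}\in \ell^2$, to invoke the $L^2$ martingale convergence theorem and conclude that $\sum_k \eta_k \JA(Y_k)[\xi_k]$ converges almost surely.

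Finally, I would interpret $W_k = \JA(Y_k)[G_k + T_k] + \beta Y_k(Y_k\tp Y_k - I_p)$ as an asymptotic selection from $\partial h(Y_k)$: along any subsequence $Y_{k_j}\to \tilde{Y}$, Condition \ref{Assumption_Subgradient_proximal}(1a) forces the time-average of $G_{k_j}$ into $\partial \phi(\tilde{Y})$, while $X_{k_j+1} - Y_{k_j} = O(\eta_{k_j})\to 0$ and outer semicontinuity of $\partial r$ send $T_{k_j}$ into $\partial r(\tilde{Y})$. Noting that $h$ admits a chain rule because $\A$ is semi-algebraic and $f$ admits a chain rule by Assumption \ref{Assumption_f}(1), and that the critical values of $h$ in $\Omega_{1/6}$ are contained in $\{f(X):0 \in \partialProj f(X), X\in \ca{S}_{n,p}\}$ by Theorem \ref{The_Equivalence_local} (which has empty interior by Assumption \ref{Assumption_f}(2)), the hypotheses of \cite[Theorem 3.2]{davis2020stochastic} are satisfied for the sequence $\{Y_k\}$. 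Hence $\{h(Y_k)\}$ converges and every cluster point of $\{Y_k\}$ is a stationary point of $h$ in $\Omega_{1/6}$, which by Theorem \ref{The_Equivalence_local} must lie on $\ca{S}_{n,p}$ and therefore is a first-order stationary point of \ref{Prob_Ori}. The main obstacle I anticipate is the bookkeeping in the error decomposition, since $T_k \in \partial r(X_{k+1})$ is evaluated at $X_{k+1}$ rather than at $Y_k$; the identification $W_k \in \partial h(Y_k)$ only holds in the limit, and this has to be combined cleanly with the martingale control on $E_{3,k}$ and the $O(\eta_k^2)$ control on $E_{2,k}$ so that the asymptotic pseudo-trajectory argument goes through.
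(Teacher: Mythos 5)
Your proposal follows essentially the same route as the paper's own proof: confinement via Lemma \ref{Le_prox_subgradient_iter_sum_bounded} and Proposition \ref{Prop_ATA}, the error decomposition \eqref{Eq_proximal_subgradient_iter} with summability of $E_{1,k}$ and $E_{2,k}$ and martingale convergence for $E_{3,k}$, verification of Assumptions A and B of \cite{davis2020stochastic}, and transfer back to \ref{Prob_Ori} via Theorem \ref{The_Equivalence_local}. The only minor imprecision is the claim $\norm{E_{2,k}}\ff = O(\eta_k^2)$: since $\A(Y_k)\neq Y_k$ off the manifold, the paper's bound is $(\tilde{M}+M_r)^2 M_A \eta_k^2 + \norm{X_k\tp X_k - I_p}\ff^3$, but the extra cubic term is summable by exactly the feasibility estimate you already invoke, so the argument goes through.
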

	\begin{proof}
		Let $\Yk = \A(\Xk)$, it holds from the smoothness of $\A$ that 
		\begin{equation}
			\Ykp - \Yk = \A(\Xkp) - \A(\Xk)  = \A(\Yk - \eta_k(G_k+ \xi_k + T_k) ) - \Yk.
		\end{equation}
		As a result, from the definition of $\{Y_k\}$, we obtain that 
		\begin{equation}
			\label{Eq_Prop_prox_subgradient_AX_convergence_1}
			\begin{aligned}
				\Ykp - \Yk ={}& -\eta_k \left(\JA(\Yk)[G_k + T_k] + \beta \Yk(\Yk\tp\Yk - I_p)\right) \\
				& -  \underbrace{\Big(- \eta_k\beta \Yk(\Yk\tp\Yk - I_p)\Big)}_{ =:E_{1,k}} - \underbrace{\Big(\Ykp - \Yk - \eta_k \JA(\Yk)[D_k + T_k]\Big)}_{ =:E_{2,k}}\\
				& - \underbrace{\Big(-\eta_k\JA(\Yk)[\xi_k]\Big)}_{ =:E_{3,k}}. 
			\end{aligned}
		\end{equation}
		We first prove that $\sum_{k = 0}^{\infty} E_{1, k}$ exists and is finite. 
		From the expression of $E_{1, k}$, it holds that 
		\begin{equation*}
			\begin{aligned}
				&\sum_{k = 0}^{\infty} \norm{E_{1, k}}\ff \leq \sum_{k = 0}^{\infty} \beta\eta_k \norm{\Yk}_2 \norm{\Yk\tp \Yk - I_p}\ff \overset{(i)}{\leq} \frac{7\beta }{6} \sum_{k = 0}^{\infty}  \eta_k \norm{\Xk\tp \Xk - I_p}\ff^3  \\
				\leq{}& \frac{7\beta\sup_{k \geq 0}\eta_k  }{36}  \sum_{k = 0}^{\infty}   \norm{\Xk\tp \Xk - I_p}\ff^2 \overset{(ii)}{<} \infty. 
			\end{aligned}
		\end{equation*}
		Here $(i)$ follows from Proposition \ref{Prop_ATA} and $(ii)$ is implied by \eqref{Eq_Le_prox_subgradient_iter_sum_bounded_2} in Lemma \ref{Le_prox_subgradient_iter_sum_bounded}. Then from the controlled convergence theorem, we obtain that $\sum_{k = 0}^{\infty} E_{1, k}$ exists and takes a finite value. 
		
	Next prove that $\sum_{k = 0}^{\infty} E_{2, k}$ exists and takes a finite value. Since $\A$ is smooth over $\bb{R}^{n\times p}$, let $M_A = \sup_{Y, Z \in \Omega_1, Y\neq Z  } 
	{\norm{\JA(Y) - \JA(Z)}\ff}/{\norm{Y-Z}\ff}$. Then  for any $X \in \Omega_{1/6}$ and any $\norm{D}\ff \leq 1$,   it holds that $\norm{\A(X+D) - \A(X) - \JA(X)[D]}\ff \leq M_A \norm{D}\fs$. 
		From the definition of $\{Y_k\}$ and $M_A$, we get
		\begin{equation}
			\label{Eq_Prop_prox_subgradient_AX_convergence_0}
			\begin{aligned}
				&\norm{\Ykp - \Yk - \eta_k \JA(\Yk)[G_k+ \xi_k + T_k]}\ff \\
				\leq{}& \norm{\A(\Yk - \eta_k(G_k+ \xi_k + T_k) ) - \Yk - \eta_k \JA(\Yk)[G_k+ \xi_k + T_k]}\ff\\
				\leq{}& \norm{\A(\Yk - \eta_k(G_k+ \xi_k + T_k) ) - \A(\Yk) - \eta_k \JA(\Yk)[G_k+ \xi_k + T_k]}\ff + \norm{\A(\Yk) - \Yk}\ff \\
				\leq{}& (\tilde{M} + M_r)^2 M_A\eta_k^2  + \norm{\Yk\tp \Yk - I_p}\ff 
				\leq (\tilde{M} + M_r)^2M_A\eta_k^2 +  \norm{\Xk\tp \Xk - I_p}\ff^3. 
			\end{aligned}
		\end{equation}
		Therefore,  from the expression of $E_{2, k}$, it holds that 
		\begin{equation*}
			\begin{aligned}
				&\sum_{k = 0}^{\infty} \norm{E_{2, k}}\ff
				\overset{(iii)}{\leq} \sum_{k = 0}^{\infty} \left((\tilde{M} + M_r)^2 M_A\eta_k^2  + \norm{\Xk\tp \Xk - I_p}\ff^3\right)\\
				\leq{}& (\tilde{M} + M_r)^2M_A\sum_{k = 0}^{\infty} \eta_k^2 +  \frac{1}{6} \sum_{k = 0}^{\infty}\norm{\Xk\tp \Xk - I_p}\fs\\
				\overset{(iv)}{\leq}{}&  5(\tilde{M} + M_r)^2M_A\sum_{k = 0}^{\infty} \eta_k^2 + \frac{19(\tilde{M} + M_r)^2}{6}\sum_{k = 0}^{\infty} \eta_k^2< \infty. 
			\end{aligned}
		\end{equation*}
		Here $(iii)$ follows from \eqref{Eq_Prop_prox_subgradient_AX_convergence_0}, and $(iv)$ holds directly from \eqref{Eq_Le_prox_subgradient_iter_sum_bounded_2}. Then by using the controlled convergence theorem again, we obtain that $\sum_{k = 0}^{\infty} E_{2, k}$ exists and takes a finite value. 
		
		Finally,  from the definition of ${E_{3, k}}$, $\{\frac{1}{\eta_k}E_{3, k}\} = \{ -\JA(Y_k)[\xi_k] \}$ is a martingale difference sequence. Therefore, it follows from \cite[Theorem 5.3.33]{dembo2010probability} that $\sum_{k = 0}^{\infty} E_{3, k}$ exists and takes finite values. As a result, for the error sequence $\{E_{k}\}$ defined by $\{E_k\} = \{ \frac{1}{\eta_k}(E_{1, k} + E_{2, k} + E_{3, k}) \}$ in \eqref{Eq_proximal_subgradient_iter}, it holds that $\sum_{k = 0}^{\infty} \eta_k E_k $ exists and takes a finite value. 
		
		Now we check the validity of Assumption A and Assumption B in \cite{davis2020stochastic}. Lemma \ref{Le_prox_subgradient_iter_sum_bounded} states that $\{\Xk\}$ is restricted in $\Omega_{1/6}$. Then  Proposition \ref{Prop_ATA} shows that $\{\Yk\} \subset \Omega_{1/6}$, and hence  Assumption A(1) and A(2) hold. Assumption  A(3) holds from the fact that $\sum_{k = 0}^{\infty} \eta_k E_k $ exists and takes a finite value. 
		Assumption A(4) directly follows from Condition  \ref{Assumption_Subgradient_proximal}(3). In addition, Assumption A.5 follows quickly from the fact that $\partial h$ is outer-semicontinuous as mentioned in \cite[Lemma 4.1]{davis2020stochastic}. 
		
		Furthermore, Assumption B(1) in \cite{davis2020stochastic} is guaranteed by Assumption \ref{Assumption_f}(2) and Theorem \ref{The_Equivalence_local}, while Assumption B(2) directly follows from Assumption \ref{Assumption_f}(1) and \cite[Lemma 5.2]{davis2020stochastic}. 
		
		Therefore, from \cite[Theorem 3.2]{davis2020stochastic} it holds that any cluster point of  $\{\Yk\}$ lies in the set $\{ X \in \Omega: 0 \in \partial h(X) \}$ and $h(\Yk)$ converges. 
	\end{proof}

	\begin{theo}
		\label{Theo_prox_subgradient_convergence}
		Suppose Assumption \ref{Assumption_f}, Assumption \ref{Assumption_f_composite} and Condition \ref{Assumption_Subgradient_proximal} holds in Algorithm \ref{Alg:subgradient_proximal}. Let $\{\Xk\}$ be the sequence generated by Algorithm \ref{Alg:subgradient_proximal}. Then any cluster point of $\{\Xk\}$ is a first-order stationary point of \ref{Prob_Ori}, and $\{h(\Xk)\}$ converges. 
	\end{theo}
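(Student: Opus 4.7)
My plan is to reduce Theorem~\ref{Theo_prox_subgradient_convergence} to Proposition~\ref{Prop_prox_subgradient_AX_convergence}, which already gives the desired statements for the auxiliary sequence $\{\A(\Xk)\}$. The bridge between $\{\Xk\}$ and $\{\A(\Xk)\}$ will be the feasibility-vanishing property inherited from Lemma~\ref{Le_prox_subgradient_iter_sum_bounded}, together with the cubic contraction supplied by Proposition~\ref{Prop_ATA} and Lemma~\ref{lem-A-proj}.

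First, I would invoke Lemma~\ref{Le_prox_subgradient_iter_sum_bounded}: under Condition~\ref{Assumption_Subgradient_proximal}, the sequence $\{\Xk\}$ stays in $\Omega_{1/6}$ and, more importantly, the proof of that lemma establishes $\sum_{k\geq 0}\norm{\Xk\tp\Xk - I_p}\fs < \infty$. In particular $\norm{\Xk\tp\Xk - I_p}\ff \to 0$, so every cluster point of $\{\Xk\}$ lies in $\ca{S}_{n,p}$. Next, combining Lemma~\ref{lem-proj} with Lemma~\ref{lem-A-proj} on $\Omega_{1/6}\subseteq\Omega_{1/2}$, one gets the elementary bound
\begin{equation*}
\norm{\A(\Xk) - \Xk}\ff \;\leq\; \norm{\A(\Xk) - \ProjS(\Xk)}\ff + \norm{\ProjS(\Xk) - \Xk}\ff \;\leq\; 4\norm{\Xk\tp\Xk - I_p}\ff^{3} + \norm{\Xk\tp\Xk - I_p}\ff,
\end{equation*}
so $\norm{\A(\Xk)-\Xk}\ff \to 0$. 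Therefore $\{\Xk\}$ and $\{\A(\Xk)\}$ have the same set of cluster points. Applying Proposition~\ref{Prop_prox_subgradient_AX_convergence}, any such cluster point is a first-order stationary point of \ref{Prob_Ori}, proving the first assertion.

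For the convergence of $\{h(\Xk)\}$, I would write
\begin{equation*}
h(\Xk) - h(\A(\Xk)) \;=\; \bigl[f(\A(\Xk)) - f(\A(\A(\Xk)))\bigr] \;+\; \tfrac{\beta}{4}\bigl[\norm{\Xk\tp\Xk - I_p}\fs - \norm{\A(\Xk)\tp\A(\Xk) - I_p}\fs\bigr].
\end{equation*}
The second bracket tends to zero because both $\norm{\Xk\tp\Xk - I_p}\ff\to 0$ and, by Proposition~\ref{Prop_ATA}, $\norm{\A(\Xk)\tp\A(\Xk) - I_p}\ff\to 0$ even faster. For the first bracket, note that $\norm{\A(\A(\Xk))-\A(\Xk)}\ff\to 0$ by applying the same bound above to $\A(\Xk)\in\Omega_1$, so by the local Lipschitz continuity of $f$ on the compact set $\Omega_1$ (whose Lipschitz constant is controlled by $M_1$), the first bracket also vanishes. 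Combining with the already established convergence of $\{h(\A(\Xk))\}$ from Proposition~\ref{Prop_prox_subgradient_AX_convergence} yields the convergence of $\{h(\Xk)\}$ to the same limit.

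The only subtlety I anticipate is a clean bookkeeping of the Lipschitz argument for $f$ across the iterates: one should confirm that $\A(\Xk),\A(\A(\Xk))$ remain in a common compact set (which they do since $\Xk\in\Omega_{1/6}$ implies $\A(\Xk)\in\Omega_{1/6^3}\subset\Omega_1$, and likewise for one further application of $\A$), so the uniform Lipschitz bound via $M_1$ applies. Beyond this, the proof is essentially a transfer-by-continuity argument riding on top of the work already done for $\{\A(\Xk)\}$.
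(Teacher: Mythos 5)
Your proposal is correct and follows essentially the same route as the paper: reduce to Proposition \ref{Prop_prox_subgradient_AX_convergence} for $\{\A(\Xk)\}$ and transfer to $\{\Xk\}$ via $\norm{\A(\Xk)-\Xk}\ff\to 0$, which follows from the vanishing feasibility violation established in Lemma \ref{Le_prox_subgradient_iter_sum_bounded}. You simply make explicit (via the triangle inequality through $\ProjS(\Xk)$ and the Lipschitz bookkeeping for $h(\Xk)-h(\A(\Xk))$) steps that the paper leaves implicit.
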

	\begin{proof}
		Proposition \ref{Prop_prox_subgradient_AX_convergence} states that any cluster point of $\{\Yk\}$ is a first-order stationary point of \ref{Prob_Ori}, and $\{h(\Yk)\}$ converges. 
		Lemma \ref{Le_prox_subgradient_iter_sum_bounded} illustrates that 
		$ \lim_{k \to \infty} \|\Xk\tp \Xk - I_p\|\ff = 0$. Then together  with Lemma \ref{Le_prox_subgradient_iter_sum_bounded}, we can conclude that $\lim_{k\to \infty} \norm{\Yk - \Xk}\ff = 0$. As a result, any cluster point of  $\{\Xk\}$ is a first-order stationary point of \ref{Prob_Ori} and $h(\Xk)$ converges. 
	\end{proof}

	\section{Numerical Experiments}

	In this section, we apply \ref{Prob_Ori} to train an orthogonally constrained convolutional neural network (OCNN) and compare its numerical performance with existing practical approaches.  All the numerical experiments in this section are run on a server with Intel Xeon 6342 CPU and NVIDIA GeForce RTX 3090 GPU running PyTorch 1.8.0.

	In our numerical experiments, to illustrate the practical performance of \ref{NEPen}, we choose the well-known CIFAR image classification dataset \cite{krizhevsky2009learning}. We develop the VGG-19 network \cite{simonyan2014very} based on the PyTorch examples on image classification, where the weights of each layer are restricted on the Stiefel manifold. Let $W_i\in \bb{R}^{n_{k_{1}, i}\times  \cdots \times n_{k_{l}, i}}$ 
	be the weights of the $i$-th layer and $\tilde{W}$ refers to all the other trainable parameters, training the following orthogonal constrained neural network can be expressed as the following optimization problem, 
	\begin{equation}
		\begin{aligned}
			\min_{W_1,...,W_l, \hat{W}}\quad & f(W_1,..., W_l, \hat{W}) = \phi(W_1,..., W_l, \hat{W}) + \gamma r(W_1,..., W_l, \hat{W}),\\
			\text{s. t.} \quad & \mathrm{reshape}(W_i)\tp \mathrm{reshape}(W_i) = I_{n_{k_{l},i}}, \quad i = 1,..., l. 
		\end{aligned}
	\end{equation}
	Here $\mathrm{reshape}(W_i)$ is the operator that reshape the $l$-th order tensor $W_i \in \bb{R}^{n_{k_{1}, i}\times  \cdots \times n_{k_{l}, i}}$ to the matrix in $\bb{R}^{n_{k_1,i}n_{k_2, i}...n_{k_{l-1},i} \times n_{k_l}}$. Moreover, we choose the regularization term $r$ as $\gamma \sum_{i = 1}^l \norm{W_{i}}_1 + \norm{\tilde{W}}_1$ to promote the sparsity of parameters in neural network \cite{zhang2016l1}.

	We compare the numerical performance of Algorithm \ref{Alg:subgradient_sto} and Algorithm \ref{Alg:subgradient_proximal} with some existing efficient adaptive stochastic gradient methods, such as  Ranger \cite{wright2021ranger21} and Yogi \cite{zaheer2018adaptive}. Moreover, we also test the performance of  existing Riemannian stochastic gradient methods for \eqref{Prob_Ori}. We choose the Riemannian stochastic subgradient method (R-sgd) \cite{li2021weakly,bolte2021conservative} and the Riemannian stochastic proximal gradient method (R-proxsgd) proposed by \cite{wang2020riemannian}. In all the Riemannian optimization approaches, the retraction is chosen as the polar decomposition \cite{Absil2009optimization}. 
	
	 In our numerical experiments, we set the batchsize as $128$ in all the test instances and choose the parameter $\gamma$ for the regularization term as $\gamma = 10^{-5}$. Moreover, at $k$-th epoch, we choose the stepsize as $\frac{\eta_0}{0.1 \cdot k+1}$ for all the tested algorithms. Follows the settings in \cite{castera2021inertial}, we use the grid search for a good initial stepsize $\eta_0$:  we select the initial stepsize $\eta_0$ from $\{k_1 \times 10^{-k_2}: k_1 = 1,3,5,7,9, ~k_2 = 1,2  \}$, and choose $\eta_0$ as the one that most increases the accuracy after $20$ epochs for each algorithm, respectively.  In addition, we set the penalty parameter as $0.1$ for all the orthogonally constrained layers in \ref{NEPen}. All the compared algorithms are initialized with the same random weights, which are randomly generated by the default initialization function of PyTorch and are projected to the feasible region by the polar decomposition.

		\begin{figure}[!htbp]
		\centering
		\subfigure[Test accuracy]{
			\begin{minipage}[t]{0.33\linewidth}
				\centering
				\includegraphics[width=\linewidth]{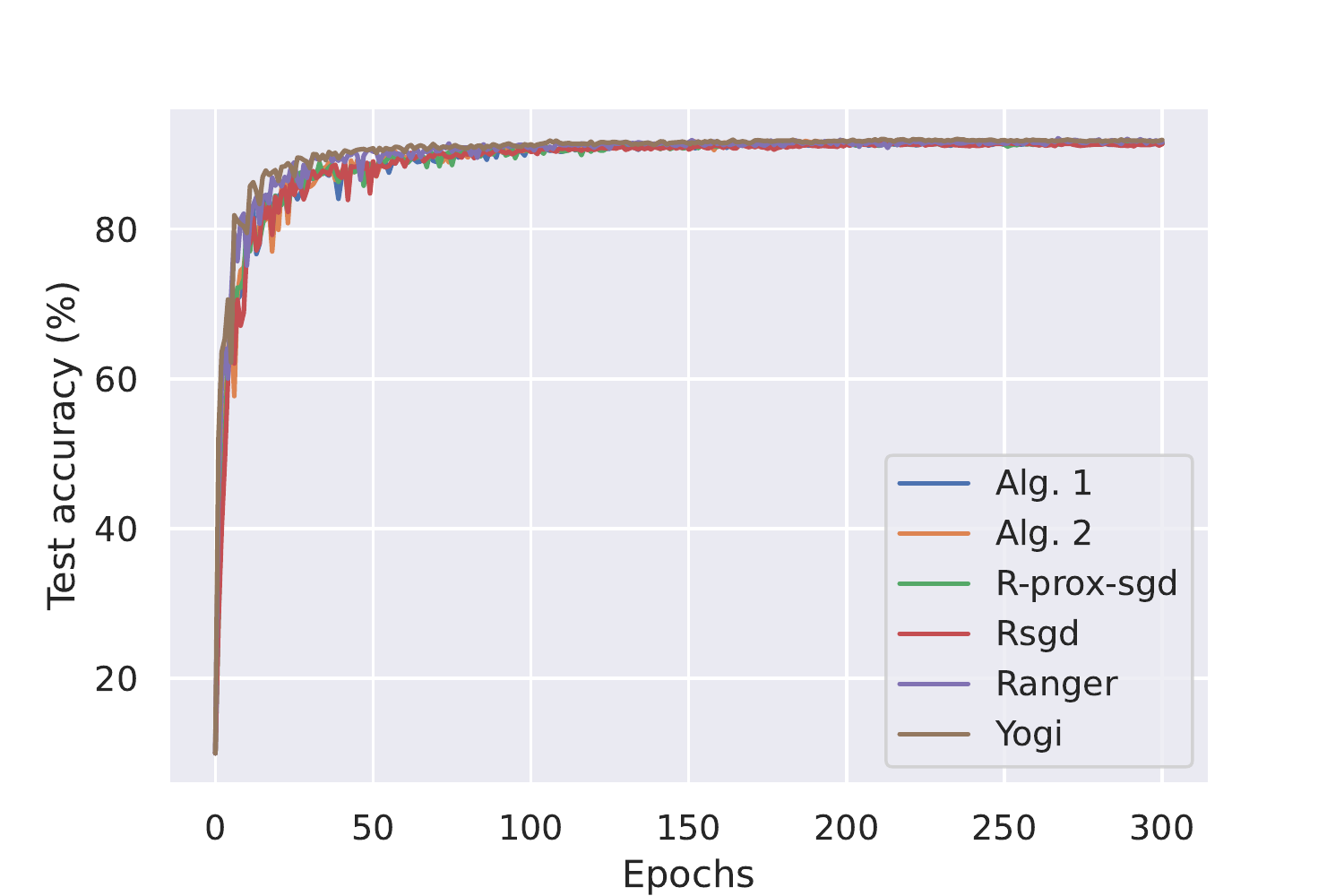}
				\label{Fig:CIFAR10_0_2}
				%\caption{Problem 2, SLAM}
			\end{minipage}%
		}%
		\subfigure[Train loss]{
			\begin{minipage}[t]{0.33\linewidth}
				\centering
				\includegraphics[width=\linewidth]{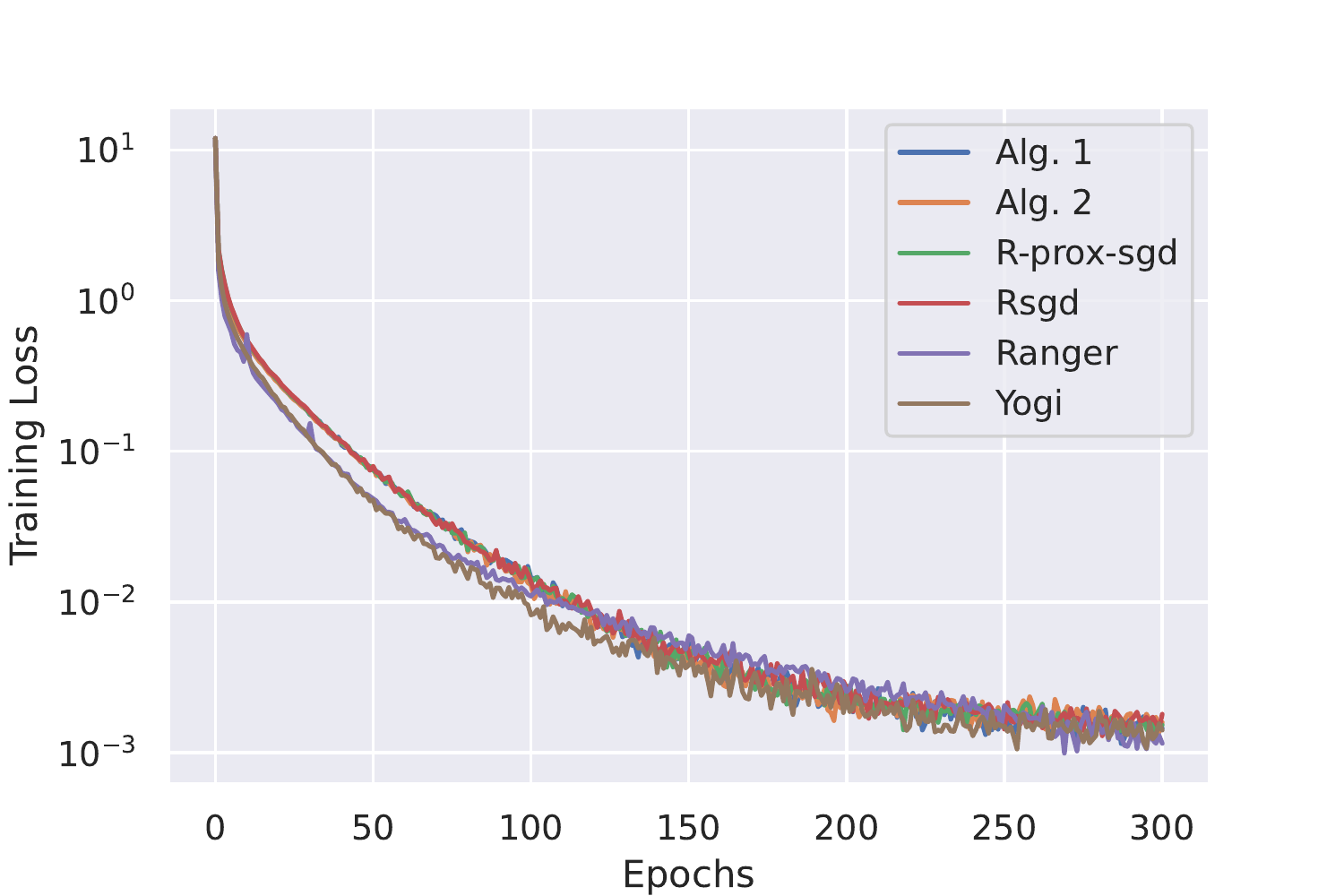}
				\label{Fig:CIFAR10_0_3}
				%\caption{Problem 2, SLAM}
			\end{minipage}%
		}%
		\subfigure[Sparsity]{
			\begin{minipage}[t]{0.33\linewidth}
				\centering
				\includegraphics[width=\linewidth]{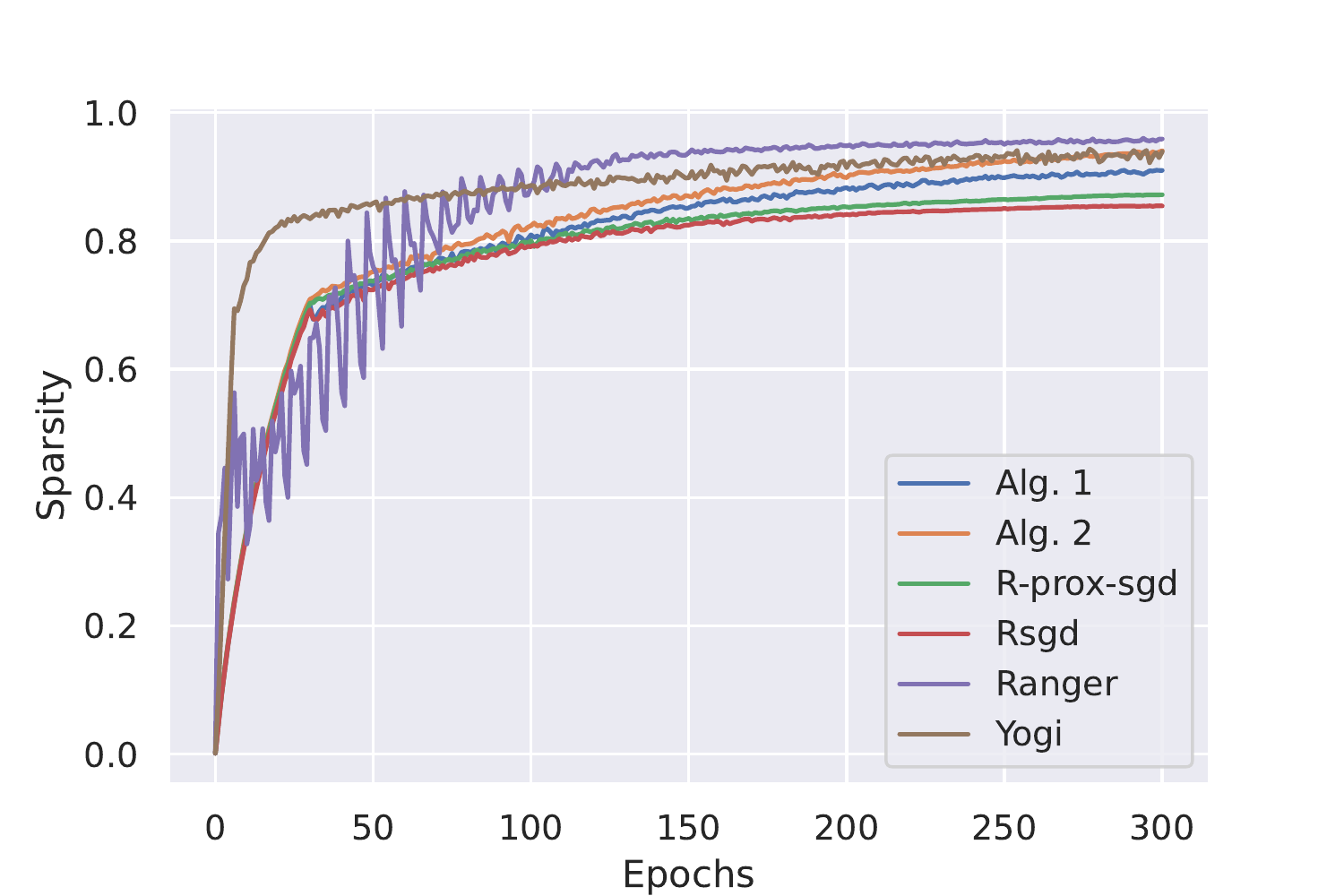}
				\label{Fig:CIFAR10_0_4}
				%\caption{Problem 2, SLAM}
			\end{minipage}%
		}%

		\subfigure[Test accuracy]{
			\begin{minipage}[t]{0.33\linewidth}
				\centering
				\includegraphics[width=\linewidth]{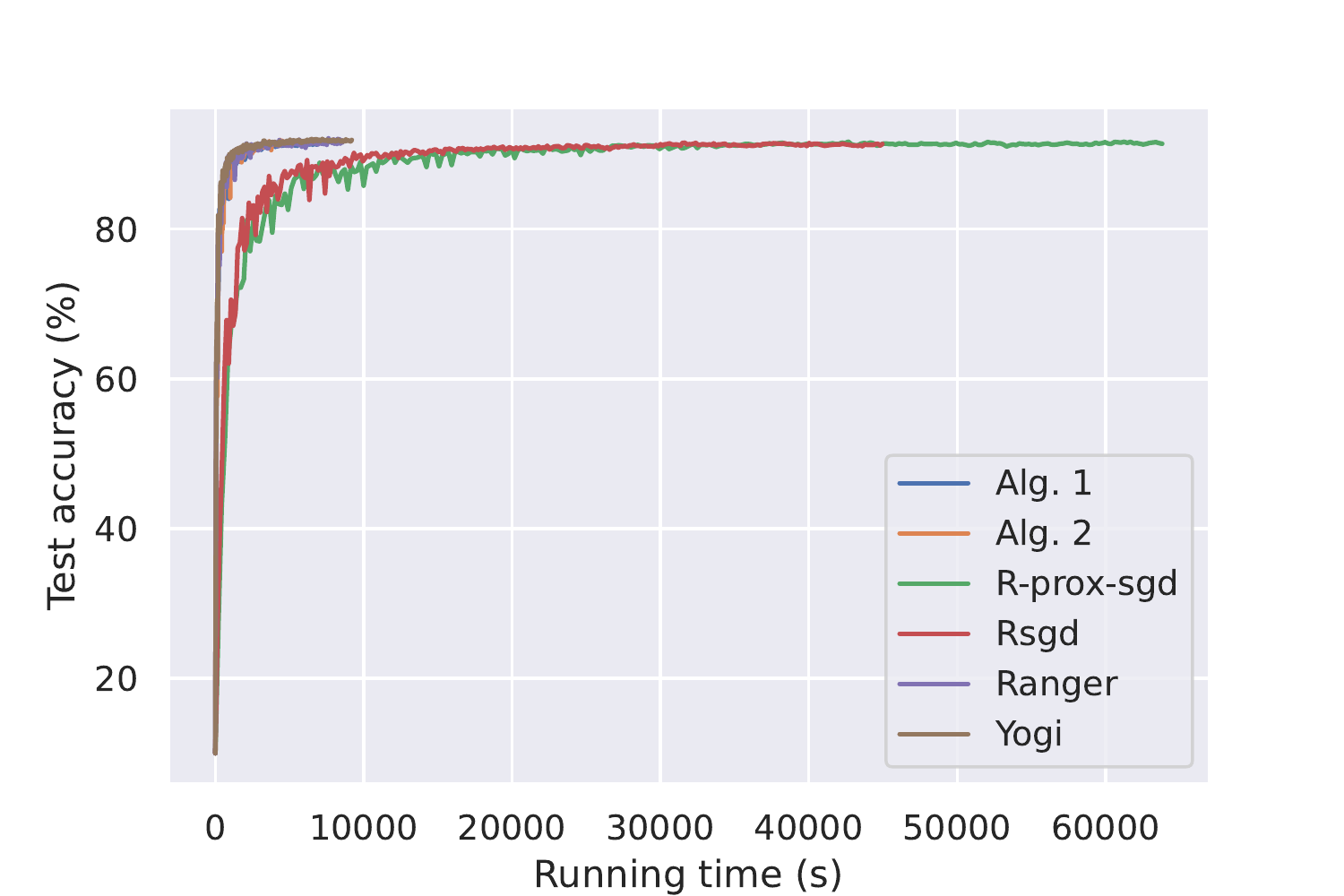}
				\label{Fig:CIFAR10_1_2}
				%\caption{Problem 2, SLAM}
			\end{minipage}%
		}%
		\subfigure[Train loss]{
			\begin{minipage}[t]{0.33\linewidth}
				\centering
				\includegraphics[width=\linewidth]{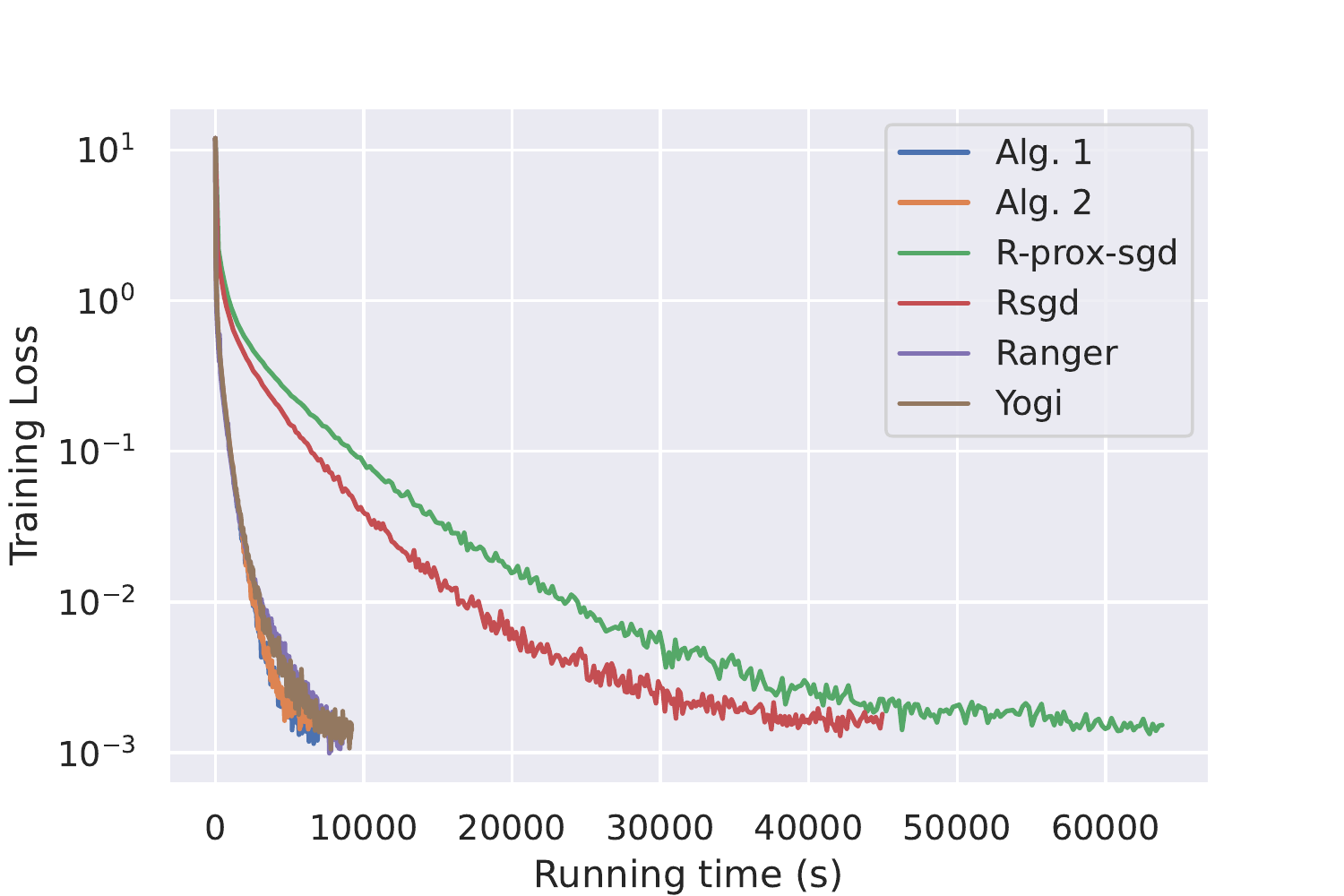}
				\label{Fig:CIFAR10_1_3}
				%\caption{Problem 2, SLAM}
			\end{minipage}%
		}%
		\subfigure[Sparsity]{
			\begin{minipage}[t]{0.33\linewidth}
				\centering
				\includegraphics[width=\linewidth]{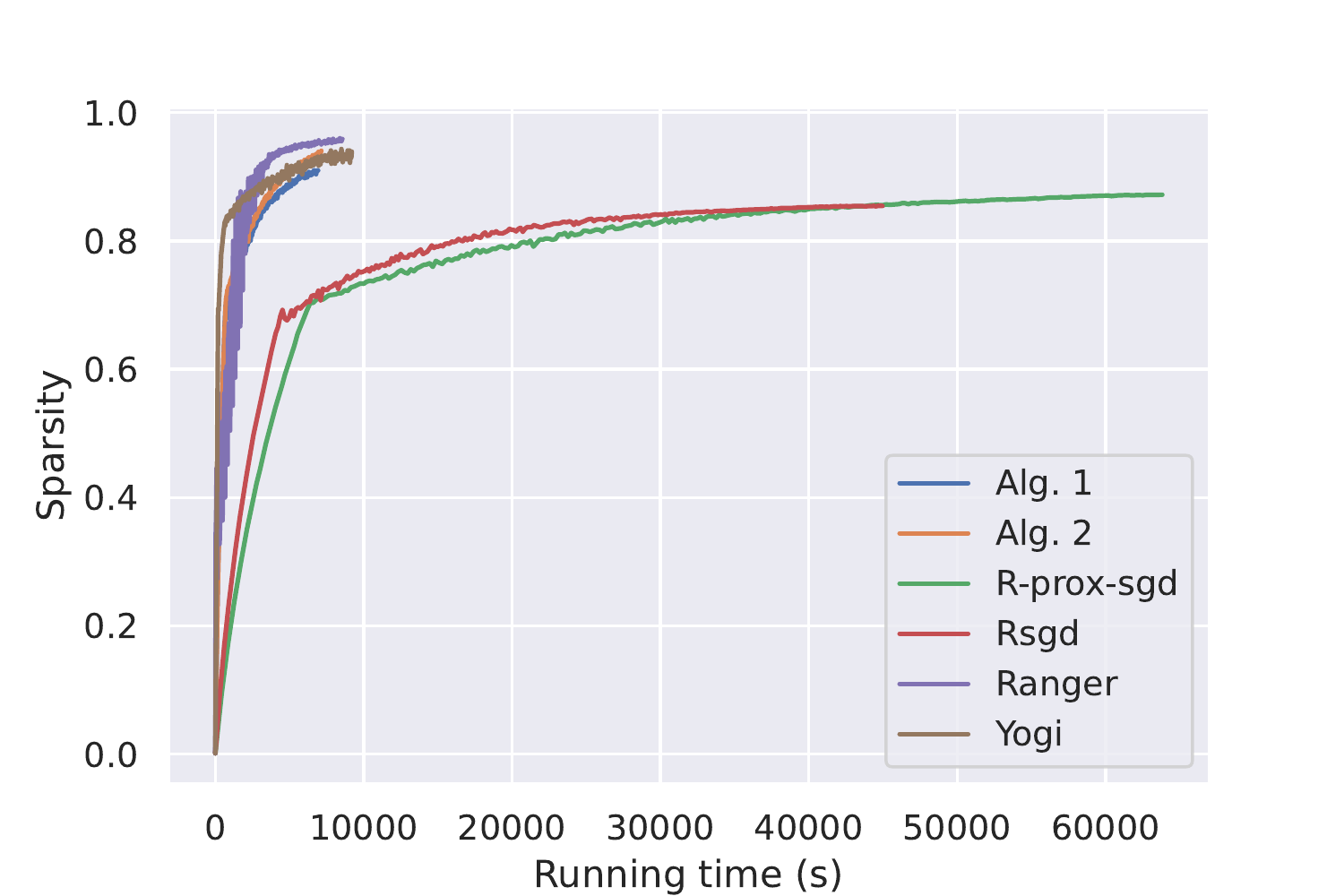}
				\label{Fig:CIFAR10_1_4}
				%\caption{Problem 2, SLAM}
			\end{minipage}%
		}%
		\caption{Comparison of Algorithm \ref{Alg:subgradient_sto} and Algorithm \ref{Alg:subgradient_proximal} with state-of-the-art algorithms R-sgd and R-proxsgd on CIFAR10 dataset.}
		\label{Fig_cifar10}
	\end{figure}

			\begin{figure}[!htbp]
		\centering
		\subfigure[Test accuracy]{
			\begin{minipage}[t]{0.33\linewidth}
				\centering
				\includegraphics[width=\linewidth]{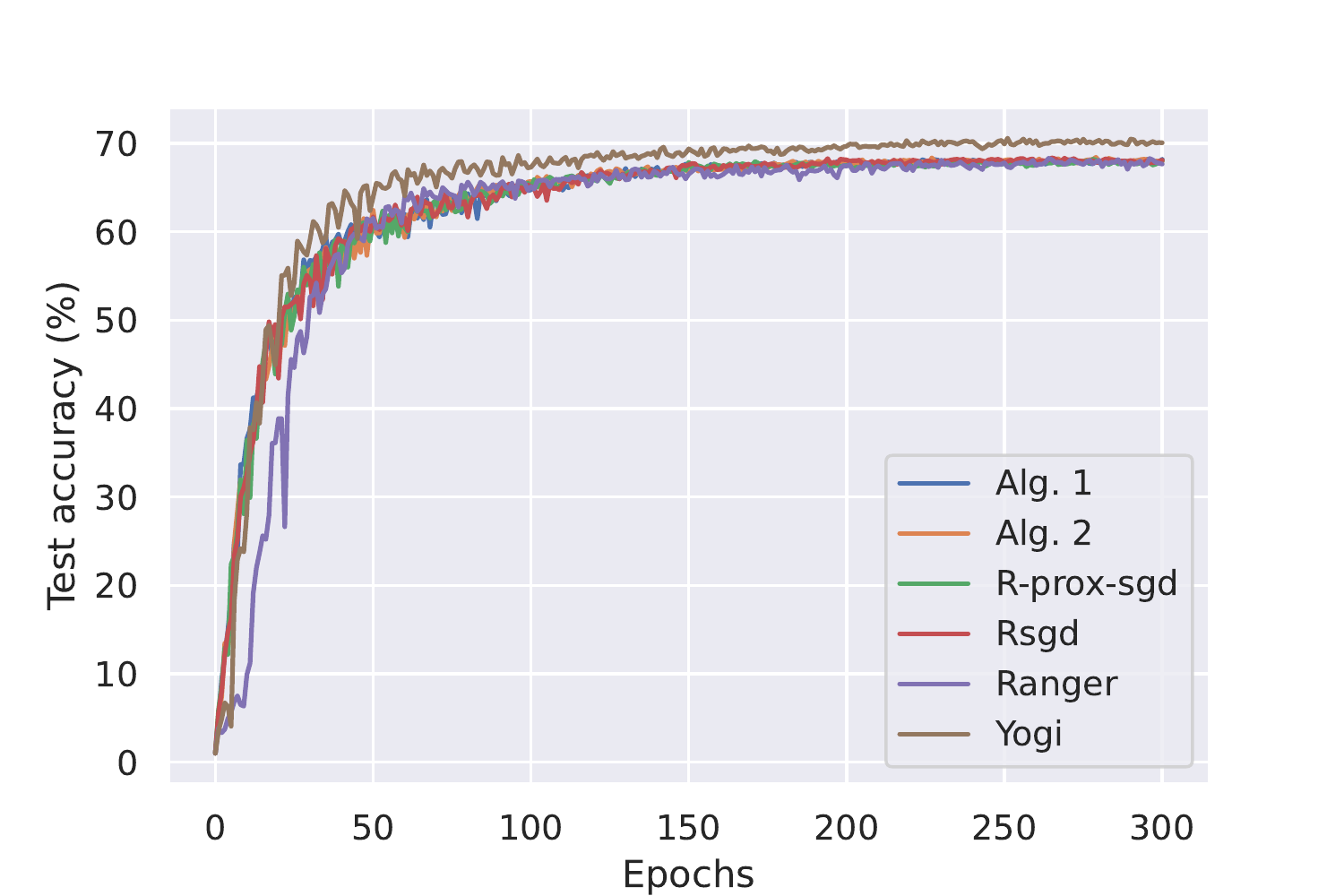}
				\label{Fig:CIFAR100_0_2}
				%\caption{Problem 2, SLAM}
			\end{minipage}%
		}%
		\subfigure[Train loss]{
			\begin{minipage}[t]{0.33\linewidth}
				\centering
				\includegraphics[width=\linewidth]{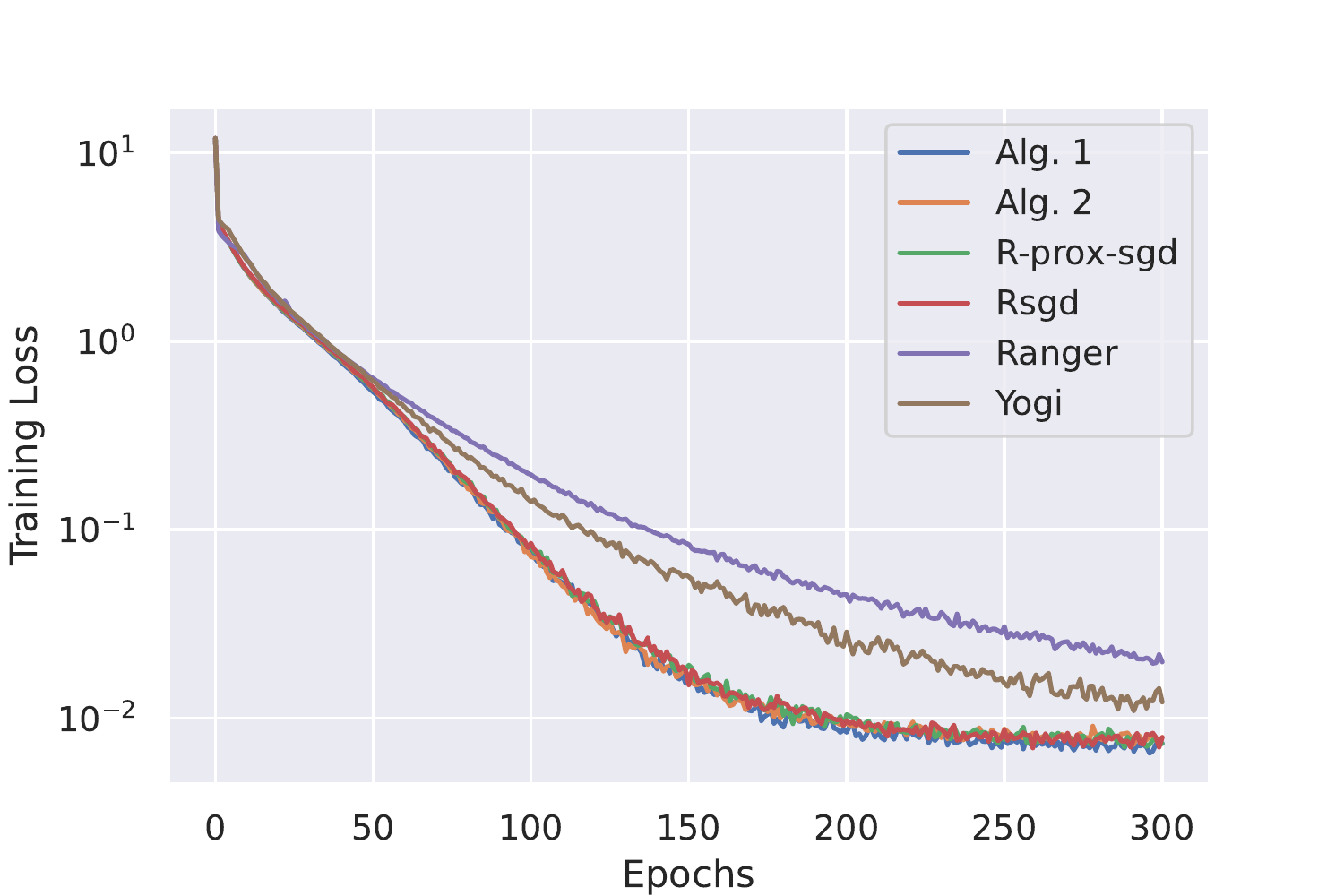}
				\label{Fig:CIFAR100_0_3}
				%\caption{Problem 2, SLAM}
			\end{minipage}%
		}%
		\subfigure[Sparsity]{
			\begin{minipage}[t]{0.33\linewidth}
				\centering
				\includegraphics[width=\linewidth]{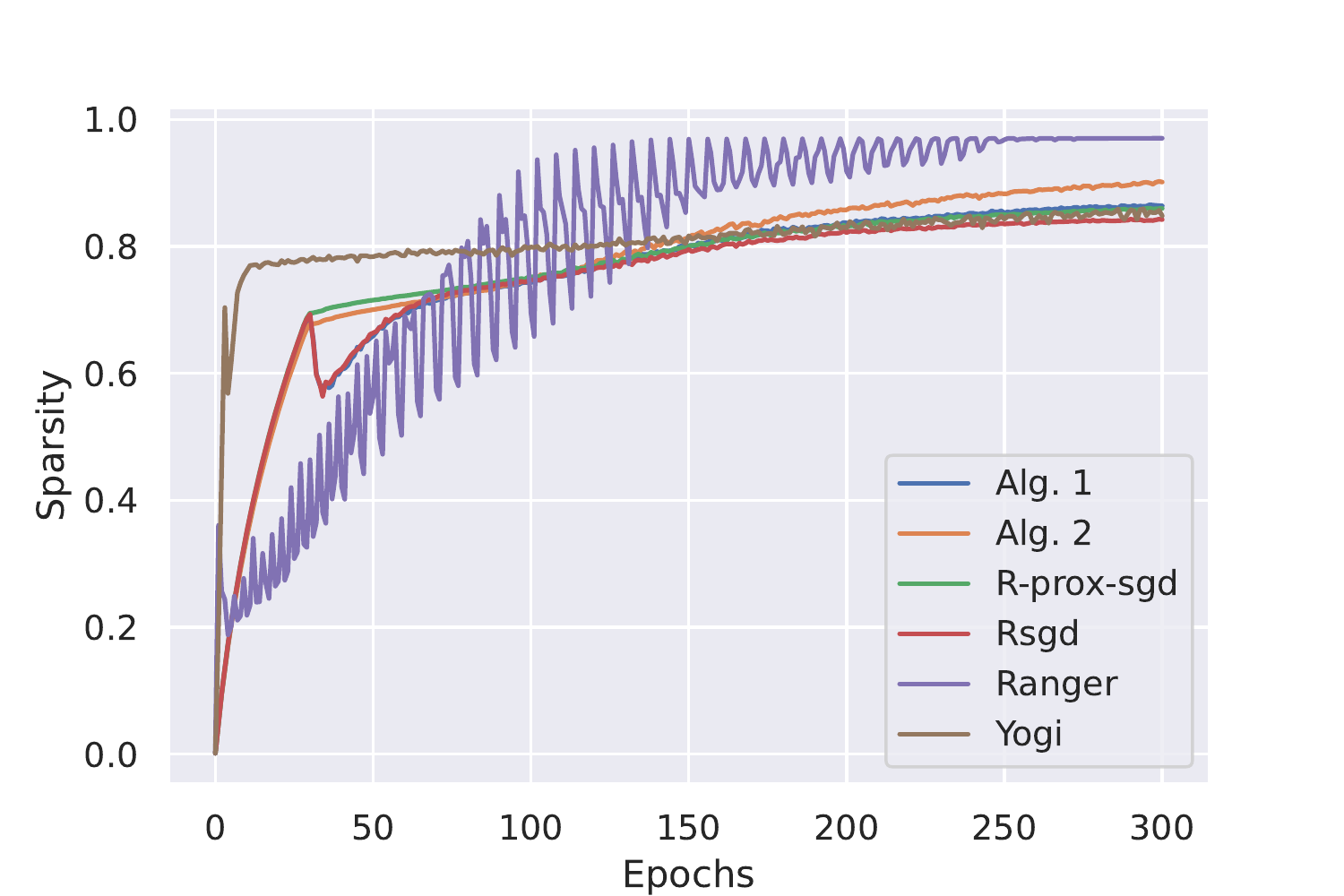}
				\label{Fig:CIFAR100_0_4}
				%\caption{Problem 2, SLAM}
			\end{minipage}%
		}%

		\subfigure[Test accuracy]{
			\begin{minipage}[t]{0.33\linewidth}
				\centering
				\includegraphics[width=\linewidth]{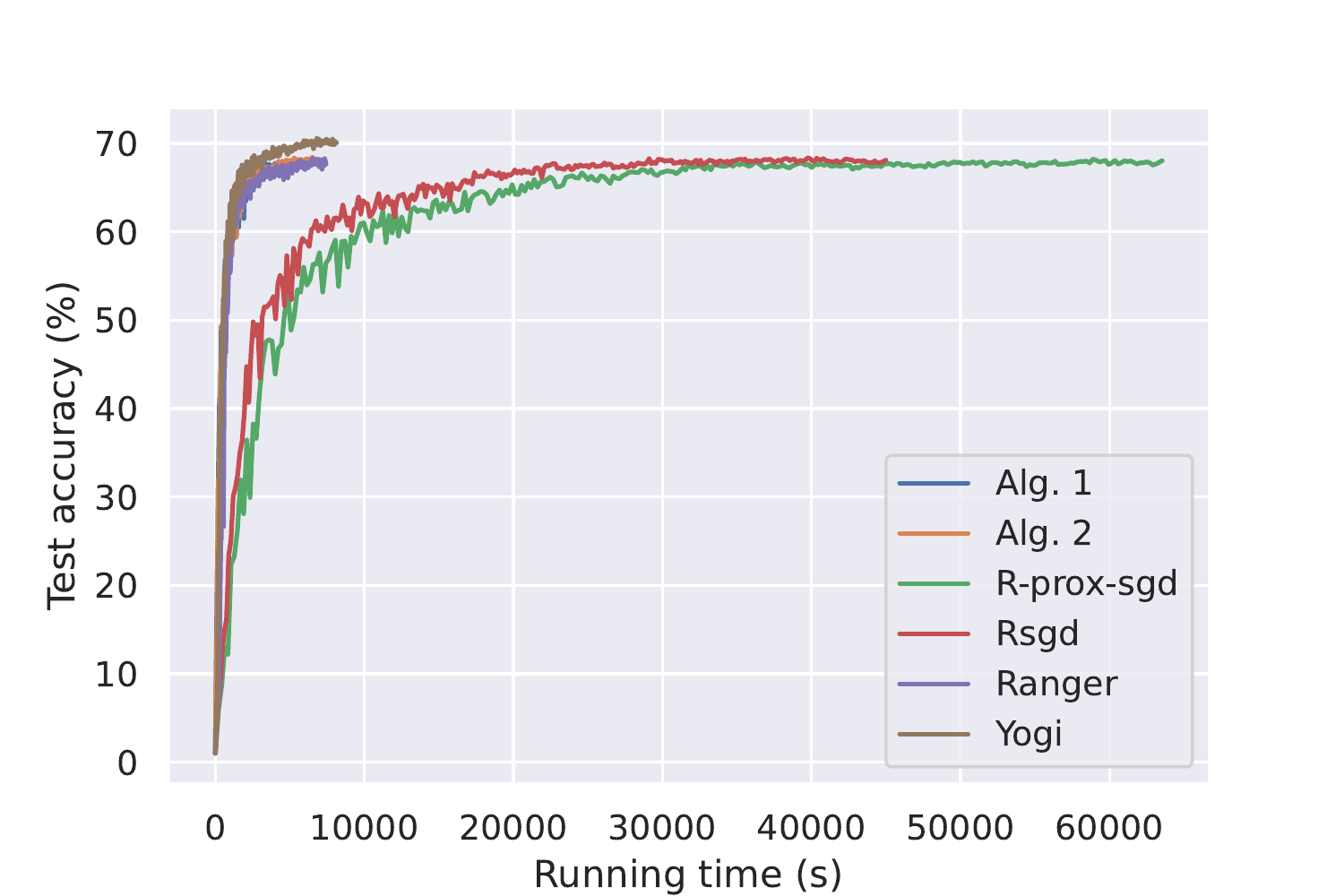}
				\label{Fig:CIFAR100_1_2}
				%\caption{Problem 2, SLAM}
			\end{minipage}%
		}%
		\subfigure[Train loss]{
			\begin{minipage}[t]{0.33\linewidth}
				\centering
				\includegraphics[width=\linewidth]{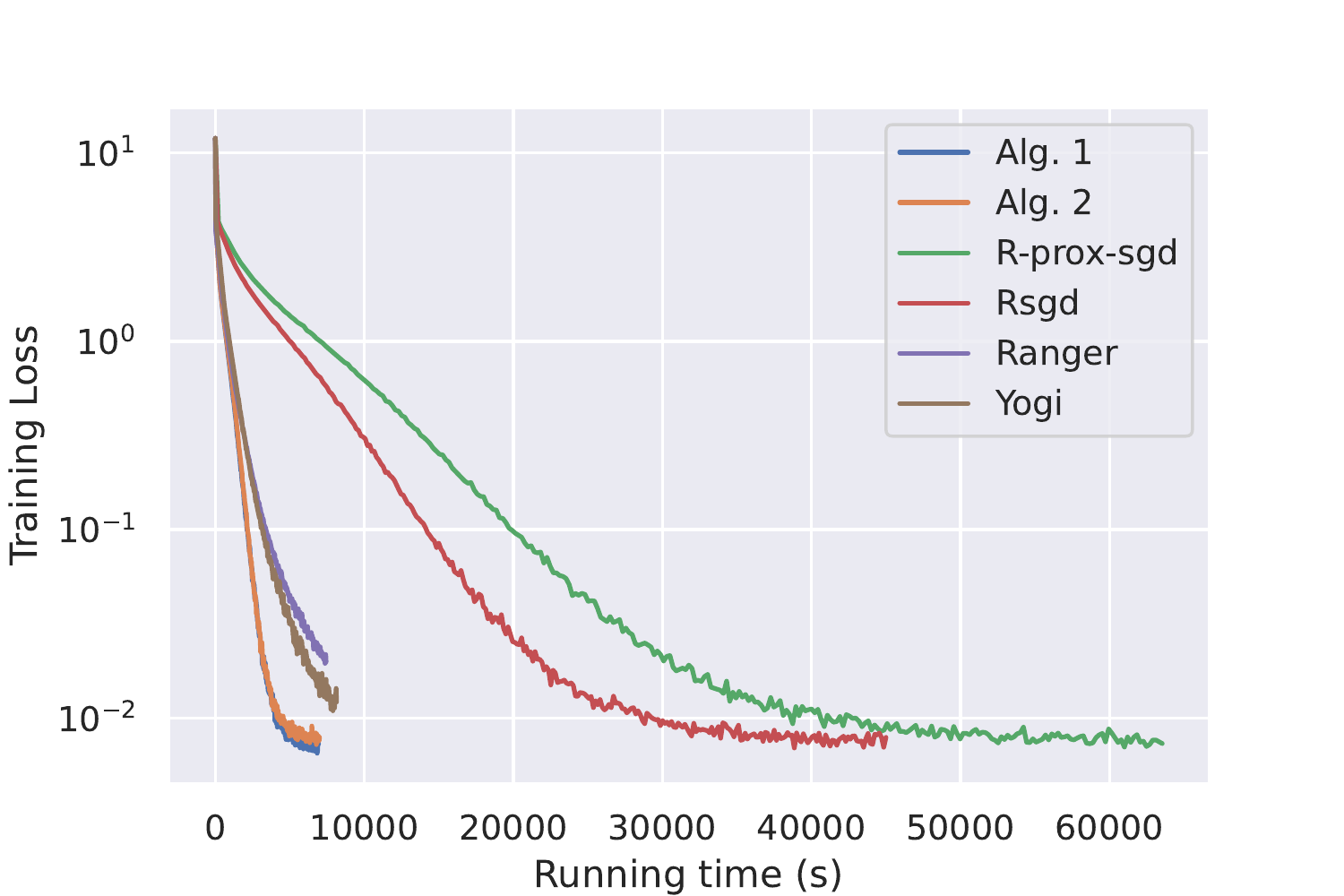}
				\label{Fig:CIFAR100_1_3}
				%\caption{Problem 2, SLAM}
			\end{minipage}%
		}%
		\subfigure[Sparsity]{
			\begin{minipage}[t]{0.33\linewidth}
				\centering
				\includegraphics[width=\linewidth]{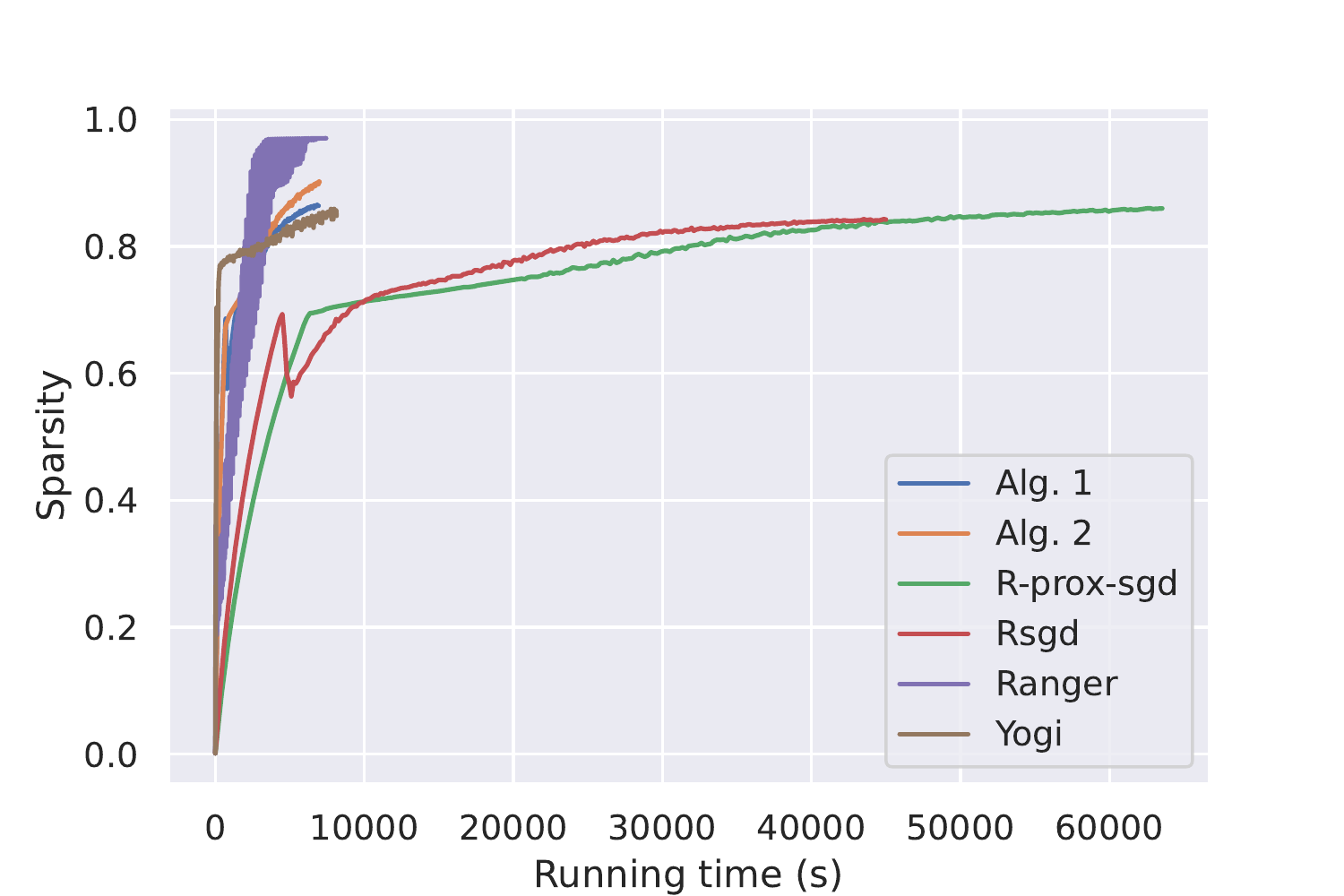}
				\label{Fig:CIFAR100_1_4}
				%\caption{Problem 2, SLAM}
			\end{minipage}%
		}%
		\caption{Comparison of Algorithm \ref{Alg:subgradient_sto} and Algorithm \ref{Alg:subgradient_proximal} with state-of-the-art algorithms R-sgd and R-proxsgd on CIFAR100 dataset.}
		\label{Fig_cifar100}
	\end{figure}

	Figure \ref{Fig_cifar10} and Figure \ref{Fig_cifar100} exhibit the test accuracy, training loss and the sparsity of the parameters generated from the compared algorithms during the training. Here the loss represents the function value of $\phi(W_1,...,W_l, \hat{W})$ in each iteration, and the sparsity refers to the ratio of the entries whose absolute value is smaller than $10^{-5}$ of the trainable parameters of the neural network.  In our comparisons, all the compared algorithms achieve similar test accuracy in the final epoch, while the algorithms employed for minimizing \ref{NEPen} achieves slightly higher sparsity than the Riemannian optimization algorithms. It is worth mentioning that since Ranger is combined with adaptive moment estimation techniques \cite{wright2021ranger21}, it may be more likely to find a sharp minima when compared with SGD method \cite{zhou2020towards}. Therefore, as illustrated in Figure \ref{Fig_cifar10} and Figure \ref{Fig_cifar100}, 
	 applying Ranger to minimize \ref{NEPen} usually achieves much higher sparsity while obtaining similar accuracy when compared with other algorithms.   Moreover, as discussed in Section 4,   R-sgd and R-proxsgd require the computation of retractions, which is more expensive than computing matrix-matrix multiplications. In addition, R-proxsgd requires to solve the Riemannian proximal mapping subproblem, which can be the main bottleneck of the algorithm as discussed in various of existing works \cite{huang2019riemannian,xiao2020l21,xiao2021penalty}. On the other hand, all the algorithms employed for minimizing \ref{NEPen} only require  matrix-matrix multiplications, and Algorithm \ref{Alg:subgradient_proximal} only needs to compute the proximal mapping in $\bb{R}^{n\times p}$, which is usually easy to compute. As illustrated in both Figure \ref{Fig_cifar10} and Figure \ref{Fig_cifar100},  the algorithms employed for minimizing \ref{NEPen} (Algorithm \ref{Alg:subgradient_sto}, Algorithm \ref{Alg:subgradient_proximal}, Ranger and Yogi) achieve significantly superior computational efficiency than R-sgd and R-proxsgd.

	\section{Conclusion}
	
	Nonsmooth nonconvex optimization problems over the Stiefel manifold have real-world applications in various areas. 
	The difficulties in handling such problems often arise from the failure of the chain rule once the required regularity conditions of the objective are missing. 
	We present a novel constraint dissolving function \ref{NEPen} for optimization problems over the Stiefel manifold \ref{Prob_Ori}, whose objective functions are only assumed to be locally Lipschitz continuous. We show that \ref{Prob_Ori} and \ref{NEPen} share the same first-order stationary points and local minimizers in a neighborhood of the Stiefel manifold. The relationships between \ref{Prob_Ori} and \ref{NEPen} illustrate that minimizing a locally Lipschitz smooth function over the Stiefel manifold can be transferred into minimizing \ref{NEPen} without any constraint. Moreover, we show that the Clarke subdifferential of \ref{NEPen} has a explicit formulation from $\partial f$. Therefore, the exactness and accessibility of Clarke subdifferential enable the direct implementation of various unconstrained optimization approaches for solving \ref{Prob_Ori} through \ref{NEPen}.

	We present a representative example to demonstrate that \ref{NEPen} admits direct implementation of existing unconstrained optimization approaches, while their theoretical properties are simultaneously retained. 
	We then introduce a generalized framework for a class of subgradient methods and establish their convergence theories based on \cite{davis2020stochastic,bolte2021conservative}. Based on the proposed framework, we develop a stochastic proximal gradient method for \ref{Prob_Ori}. 
	Furthermore, numerical examples on an orthogonally constrained
	neural network for classification problems shows the superior performance of our proposed algorithms over existing Riemannian optimization approaches. Furthermore, these numerical experiments  demonstrate that the problem can be solved via \ref{NEPen} by efficient and direct implementation of existing unconstrained optimization solvers.

	\bibliography{ref}
	\bibliographystyle{plainnat}

\end{document}